 \titleformat{\section}{\bfseries\Large}{\appendixname~\thesection:}{0.5em}{}%
 \titleformat{\subsection}{\bfseries\large}{\thesubsection}{0.5em}{}%
\DeclareMathAlphabet{\mathcal}{OMS}{cmsy}{m}{n}
\definecolor{myyellow}{RGB}{200, 200, 0}
\newtheorem{theorem}{Theorem}
\numberwithin{theorem}{section}
\newtheorem{prop}[theorem]{Proposition}
\newtheorem{lemma}[theorem]{Lemma}
\newtheorem{corollary}[theorem]{Corollary}
\theoremstyle{definition}
\newtheorem{definition}[theorem]{Definition}
\newtheorem*{remark}{Remark}
\newtheorem{example}{Example}
\newtheorem{question}{Question}
\newtheorem*{Maintheorem}{Main Theorem}
\newtheorem{maintheorem}{Theorem}
\newcommand\dboxed[1]{
\raisebox{-1ex}{
\begin{tikzpicture}
         \node[draw,dashed] {\text{$#1$}}; 
   \end{tikzpicture}}
}
\newcommand\dbox[1]{
\begin{tikzpicture}
         \node[draw,dashed] {\text{$#1$}}; 
   \end{tikzpicture}
}
\newcommand{\igc}[2]{\begin{center} \includegraphics[scale=#1]{fig/#2} \end{center}}
\newcommand{\ext}{\mathrm{Ext}}
\newcommand{\bs}{\mathrm{BS}}
\newcommand{\kb}{\mathbbm{k}}
\renewcommand{\hom}{\mathrm{Hom}}
\newcommand{\im}{\mathrm{im } \,}
\newcommand{\Db}{\mathbb{D}}
\newcommand{\sbim}{\mathbb{S}\textnormal{Bim}}
\newcommand{\hh}{\mathrm{HH}}
\newcommand{\hhh}{\mathrm{HHH}}
\newcommand\scalemath[2]{\scalebox{#1}{\mbox{\ensuremath{\displaystyle #2}}}}
\newcommand{\sbrac}[1]{\left[#1 \right]}
\newcommand{\abrac}[1]{\left\langle#1\right\rangle}
\newcommand{\paren}[1]{\left( #1 \right)}
\newcommand{\set}[1]{\left \{ #1 \right \}}
\newcommand{\s}[1]{\scalemath{0.8}{#1}}
\newcommand{\un}[1]{\underline{#1}}
\newcommand{\Zb}{\mathbb{Z}}
\DeclareRobustCommand\widecheck[1]{{\mathpalette\@widecheck{#1}}}
\def\@widecheck#1#2{%
    \setbox\z@\hbox{\m@th$#1#2$}%
    \setbox\tw@\hbox{\m@th$#1%
       \widehat{%
          \vrule\@width\z@\@height\ht\z@
          \vrule\@height\z@\@width\wd\z@}$}%
    \dp\tw@-\ht\z@
    \@tempdima\ht\z@ \advance\@tempdima2\ht\tw@ \divide\@tempdima\thr@@
    \setbox\tw@\hbox{%
       \raise\@tempdima\hbox{\scalebox{1}[-1]{\lower\@tempdima\box
\tw@}}}%
    {\ooalign{\box\tw@ \cr \box\z@}}}
\tikzset{
uni/.style={circle,fill,draw,inner sep=0mm,minimum size=1mm},
  midarrow/.style={postaction={decorate,decoration={markings,mark=at position #1 with {\arrow{>}}}}},
  midarrow/.default=0.5,
  midarrowrev/.style={postaction={decorate,decoration={markings,mark=at position #1 with {\arrow{<}}}}},
  midarrowrev/.default=0.5,
  dot/.style={circle,fill,draw,inner sep=0mm,minimum size=1.3mm},  
  rdot/.style={circle,fill, color=red, draw,inner sep=0mm,minimum size=1.3mm},  
  bdot/.style={circle,fill, color=blue, draw,inner sep=0mm,minimum size=1.3mm},  
  pdot/.style={circle,color=Violet,fill=Violet,draw,inner sep=0mm,minimum size=1.3mm},  
  halfdot/.style={circle,fill=black, opacity=0.3 ,draw,inner sep=0mm,minimum size=1.1mm}, 
  hdot/.style={circle,fill=white,draw,inner sep=0mm,minimum size=1.3mm}, 
  rhdot/.style={circle,color=red,fill=white,draw,inner sep=0mm,minimum size=1.3mm, }, 
  bhdot/.style={circle,color=blue,fill=white,draw,inner sep=0mm,minimum size=1.3mm, },
  ghdot/.style={circle,color=teal,fill=white,draw,inner sep=0mm,minimum size=1.3mm, },
  yhdot/.style={circle,color=myyellow,fill=white,draw,inner sep=0mm,minimum size=1.3mm, }, 
  rkhdot/.style={circle,color=red,fill=white,draw,inner sep=0mm,minimum size=4mm, }, 
  rbkhdot/.style={circle,color=red,fill=white,draw,inner sep=0mm,minimum size=6mm, }, 
  rbbkhdot/.style={circle,color=red,fill=white,draw,inner sep=0mm,minimum size=6.7mm, }, 
  btridot/.style={rectangle,color=blue,fill=white,draw,inner sep=0.3mm,minimum size=1.7mm}, 
    rtridot/.style={rectangle,color=red,fill=white,draw,inner sep=0.3mm,minimum size=1.7mm}, 
  every picture/.style=thick
}
\newsavebox\lowerdot
\savebox\lowerdot{%
\begin{tikzpicture}[scale=0.3,thick,baseline]
 \draw (0,-0.5) to (0,0.5);
 \node at (0,-0.5) {$\bullet$};
\end{tikzpicture}%
}
\newsavebox\upperdot
\savebox\upperdot{%
\begin{tikzpicture}[scale=0.3,thick,baseline]
 \draw (0,-0.5) to (0,0.5);
 \node at (0,0.5) {$\bullet$};
\end{tikzpicture}%
}
\newcommand{\rzero}{\ \raisebox{-2ex}{\begin{tikzpicture}[xscale=0.25,yscale=0.3,thick,baseline]
 \draw[red] (0,0) -- (0,2); 
\end{tikzpicture}}}
\newcommand{\bzero}{\ \raisebox{-2ex}{\begin{tikzpicture}[xscale=0.25,yscale=0.3,thick,baseline]
 \draw[blue] (0,0) -- (0,2); 
\end{tikzpicture}}}
\newcommand{\ds}{\ \raisebox{-2ex}{\begin{tikzpicture}[xscale=0.25,yscale=0.3,thick,baseline]
 \draw[red] (0,-0.1) -- (0,2.1);
 \node[rtridot] at (0,1) {$\scalemath{0.9}{d_s}$};
\end{tikzpicture}}}
\newcommand{\dt}{\ \raisebox{-2ex}{\begin{tikzpicture}[xscale=0.25,yscale=0.3,thick,baseline]
 \draw[blue] (0,0) -- (0,2); 
 \node[btridot] at (0,1) {$\scalemath{0.9}{d_t}$};
\end{tikzpicture}}}
\newcommand{\rone}{\ \raisebox{-2ex}{\begin{tikzpicture}[xscale=0.25,yscale=0.3,thick,baseline]
 \draw[red] (0,0) -- (0,1);
 \draw[red] (0,2) -- (0,3);
 \node[dot, red] at (0,1) {};
  \node[dot, red] at (0,2) {};
\end{tikzpicture}}}
\newcommand{\bone}{\ \raisebox{-2ex}{\begin{tikzpicture}[xscale=0.25,yscale=0.3,thick,baseline]
 \draw[blue] (0,0) -- (0,1);
 \draw[blue] (0,2) -- (0,3);
 \node[dot, blue] at (0,1) {};
  \node[dot, blue] at (0,2) {};
\end{tikzpicture}}}
\newcommand{\rhunit}{\ \raisebox{0ex}{\begin{tikzpicture}[xscale=0.25,yscale=0.3,thick,baseline]
 \draw[red] (0,0) -- (0,1);
 \node[rhdot] at (0,0) {};
\end{tikzpicture}}}
\newcommand{\runit}{\ \raisebox{0ex}{\begin{tikzpicture}[xscale=0.25,yscale=0.3,thick,baseline]
 \draw[red] (0,0) -- (0,1);
 \node[dot, red] at (0,0) {};
\end{tikzpicture}}}
\newcommand{\bhunit}{\ \raisebox{0ex}{\begin{tikzpicture}[xscale=0.25,yscale=0.3,thick,baseline]
 \draw[blue] (0,0) -- (0,1);
 \node[bhdot] at (0,0) {};
\end{tikzpicture}}}
\newcommand{\gunit}{\ \raisebox{0ex}{\begin{tikzpicture}[xscale=0.25,yscale=0.3,thick,baseline]
 \draw[teal] (0,0) -- (0,1);
 \node[dot, teal] at (0,0) {};
\end{tikzpicture}}}
\newcommand{\ghunit}{\ \raisebox{0ex}{\begin{tikzpicture}[xscale=0.25,yscale=0.3,thick,baseline]
 \draw[teal] (0,0) -- (0,1);
 \node[ghdot] at (0,0) {};
\end{tikzpicture}}}
\newcommand{\rhzero}{\ \raisebox{0ex}{\begin{tikzpicture}[xscale=0.25,yscale=0.4,thick,baseline]
 \draw[red] (0,-1) -- (0,1);
 \node[rhdot] at (0,0) {};
\end{tikzpicture}}}
\newcommand{\bhzero}{\ \raisebox{0ex}{\begin{tikzpicture}[xscale=0.25,yscale=0.4,thick,baseline]
 \draw[blue] (0,-1) -- (0,1);
 \node[bhdot] at (0,0) {};
\end{tikzpicture}}}
\newcommand{\bunit}{\ \raisebox{0ex}{\begin{tikzpicture}[xscale=0.25,yscale=0.3,thick,baseline]
 \draw[blue] (0,0) -- (0,1);
 \node[dot, blue] at (0,0) {};
\end{tikzpicture}}}
\newcommand{\counit}{\ \raisebox{1ex}{\begin{tikzpicture}[xscale=0.25,yscale=0.3,thick,baseline]
 \draw (0,-1) -- (0,0);
 \node[dot] at (0,0) {};
\end{tikzpicture}}}
\newcommand{\rcounit}{\ \raisebox{0ex}{\begin{tikzpicture}[xscale=0.25,yscale=0.3,thick,baseline]
 \draw[red] (0,-1) -- (0,0);
 \node[dot, red] at (0,0) {};
\end{tikzpicture}}}
\newcommand{\bcounit}{\ \raisebox{0ex}{\begin{tikzpicture}[xscale=0.25,yscale=0.3,thick,baseline]
 \draw[blue] (0,-1) -- (0,0);
 \node[dot, blue] at (0,0) {};
\end{tikzpicture}}}
\newcommand{\bhcounit}{\ \raisebox{0ex}{\begin{tikzpicture}[xscale=0.25,yscale=0.3,thick,baseline]
 \draw[blue] (0,-1) -- (0,0);
 \node[bhdot] at (0,0) {};
\end{tikzpicture}}}
\newcommand{\lcounit}[1]{ \raisebox{0ex}{\begin{tikzpicture}[xscale=0.25,yscale=0.3,thick,baseline]
\node at (0,-1.6) {$#1$};
 \draw (0,-1) -- (0,0);
 \node[dot] at (0,0) {};
\end{tikzpicture}} \hspace{-1ex}}
\newcommand{\lunit}[1]{ \raisebox{0ex}{\begin{tikzpicture}[xscale=0.25,yscale=0.3,thick,baseline]
 \draw (0,0) -- (0,1.2);
 \node[dot] at (0,0) {};
 \node at (0,-1) {$#1$};
\end{tikzpicture}}}
\newcommand{\lhunit}[1]{ \raisebox{0ex}{\begin{tikzpicture}[xscale=0.25,yscale=0.3,thick,baseline]
 \draw (0,0) -- (0,1.2);
 \node[hdot] at (0,0) {};
 \node at (0,-1) {$#1$};
\end{tikzpicture}} }
\newcommand{\lzero}[1]{ \raisebox{-1.7ex}{\begin{tikzpicture}[xscale=0.25,yscale=0.3,thick,baseline]
 \draw (0,0) -- (0,2);
 \node at (0,-0.7) {$#1$};
\end{tikzpicture} } \hspace{-1ex}}
\newcommand{\lhalfunit}[1]{ \raisebox{-3.5ex}{\begin{tikzpicture}[scale=0.6]
    \draw (0,0.1)-- (0,0.6);
    \draw[thick] (0,0) circle (1.3mm);
    
    \fill[black] (0,0.15) arc[start angle=90, end angle=270, radius=1.3mm] -- cycle;
    \node at (0,-0.5) {$#1$};
\end{tikzpicture}}
}
\newcommand{\rpitchfork}{\ \raisebox{-2ex}{\begin{tikzpicture}[xscale=0.25,yscale=0.3,thick,baseline]
             \draw[red] (1,1.3) to (1,2.1);
             \draw[red] (0,0) to[out=90,in=-180] (1,1.3);
	       \draw[red] (2,0) to[out=90,in=0] (1,1.3);
	       \node[bdot] at (1,0.8) {};
        \draw[blue] (1,0) to (1,0.8);
\end{tikzpicture}}}
\newcommand{\bpitchfork}{\ \raisebox{-2ex}{\begin{tikzpicture}[xscale=0.25,yscale=0.3,thick,baseline]
             \draw[blue] (1,1.3) to (1,2.1);
             \draw[blue] (0,0) to[out=90,in=-180] (1,1.3);
	       \draw[blue] (2,0) to[out=90,in=0] (1,1.3);
	       \node[rdot] at (1,0.8) {};
        \draw[red] (1,0) to (1,0.8);
\end{tikzpicture}}}
\newcommand{\rcup}{\ \raisebox{2ex}{\begin{tikzpicture}[xscale=0.25,yscale=-0.3,thick,baseline]
             \draw[red] (0,0) to[out=90,in=-180] (1,1.3);
	       \draw[red] (2,0) to[out=90,in=0] (1,1.3);
\end{tikzpicture}}}
\newcommand{\rpitchcup}{\ \raisebox{2ex}{\begin{tikzpicture}[xscale=0.25,yscale=-0.3,thick,baseline]
             \draw[red] (0,0) to[out=90,in=-180] (1,1.3);
	       \draw[red] (2,0) to[out=90,in=0] (1,1.3);
	       \node[bdot] at (1,0.7) {};
        \draw[blue] (1,0) to (1,0.7);
\end{tikzpicture}}}
\newcommand{\rhpitchcupin}{\ \raisebox{2ex}{\begin{tikzpicture}[xscale=0.25,yscale=-0.3,thick,baseline]
             \draw[red] (0,0) to[out=90,in=-180] (1,1.3);
	       \draw[red] (2,0) to[out=90,in=0] (1,1.3);
              \draw[blue] (1,0) to (1,0.7);
              \node[bhdot] at (1,0.7) {};
\end{tikzpicture}}}
\newcommand{\rhpitchcupout}{\ \raisebox{2ex}{\begin{tikzpicture}[xscale=0.25,yscale=-0.3,thick,baseline]
             \draw[red] (0,0) to[out=90,in=-180] (1,1.3);
	       \draw[red] (2,0) to[out=90,in=0] (1,1.3);
              \node[rhdot] at (1,1.3) {};
	       \node[bdot] at (1,0.7) {};
        \draw[blue] (1,0) to (1,0.7);
\end{tikzpicture}}}
\newcommand{\rhhpitchcupout}{\ \raisebox{2ex}{\begin{tikzpicture}[xscale=0.25,yscale=-0.3,thick,baseline]
             \draw[red] (0,0) to[out=90,in=-180] (1,1.3);
	       \draw[red] (2,0) to[out=90,in=0] (1,1.3);
              \node[rhdot] at (1,1.3) {};
             \draw[blue] (1,0) to (1,0.7);
	       \node[bhdot] at (1,0.7) {};
\end{tikzpicture}}}
\newcommand{\bpitchcup}{\ \raisebox{2ex}{\begin{tikzpicture}[xscale=0.25,yscale=-0.3,thick,baseline]
             \draw[blue] (0,0) to[out=90,in=-180] (1,1.3);
	       \draw[blue] (2,0) to[out=90,in=0] (1,1.3);
	       \node[rdot] at (1,0.7) {};
        \draw[red] (1,0) to (1,0.7);
\end{tikzpicture}}}
\newcommand{\bhpitchcupin}{\ \raisebox{2ex}{\begin{tikzpicture}[xscale=0.25,yscale=-0.3,thick,baseline]
             \draw[blue] (0,0) to[out=90,in=-180] (1,1.3);
	       \draw[blue] (2,0) to[out=90,in=0] (1,1.3);
              \draw[red] (1,0) to (1,0.7);
              \node[rhdot] at (1,0.7) {};
\end{tikzpicture}}}
\newcommand{\bhpitchcupout}{\ \raisebox{2ex}{\begin{tikzpicture}[xscale=0.25,yscale=-0.3,thick,baseline]
             \draw[blue] (0,0) to[out=90,in=-180] (1,1.3);
	       \draw[blue] (2,0) to[out=90,in=0] (1,1.3);
              \node[bhdot] at (1,1.3) {};
	       \node[rdot] at (1,0.7) {};
        \draw[red] (1,0) to (1,0.7);
\end{tikzpicture}}}
\newcommand{\bhhpitchcupout}{\ \raisebox{2ex}{\begin{tikzpicture}[xscale=0.25,yscale=-0.3,thick,baseline]
             \draw[blue] (0,0) to[out=90,in=-180] (1,1.3);
	       \draw[blue] (2,0) to[out=90,in=0] (1,1.3);
              \node[bhdot] at (1,1.3) {};
             \draw[red] (1,0) to (1,0.7);
	       \node[rhdot] at (1,0.7) {};
\end{tikzpicture}}}
\newcommand{\gpitchcup}{\ \raisebox{2ex}{\begin{tikzpicture}[xscale=0.25,yscale=-0.3,thick,baseline]
             \draw[red] (0,0) to[out=90,in=-180] (1,1.3);
	       \draw[red] (2,0) to[out=90,in=0] (1,1.3);
	       \node[rdot, teal] at (1,0.7) {};
        \draw[teal] (1,0) to (1,0.7);
\end{tikzpicture}}}
\newcommand{\ghpitchcupout}{\ \raisebox{2ex}{\begin{tikzpicture}[xscale=0.25,yscale=-0.3,thick,baseline]
             \draw[red] (0,0) to[out=90,in=-180] (1,1.3);
	       \draw[red] (2,0) to[out=90,in=0] (1,1.3);
            \draw[teal] (1,0) to (1,0.7);
	       \node[ghdot] at (1,0.7) {};
\end{tikzpicture}}}
\newcommand{\grhpitchcupout}{\ \raisebox{2ex}{\begin{tikzpicture}[xscale=0.25,yscale=-0.3,thick,baseline]
             \draw[red] (0,0) to[out=90,in=-180] (1,1.3);
	       \draw[red] (2,0) to[out=90,in=0] (1,1.3);
              \node[rhdot] at (1,1.3) {};
	       \node[dot, teal] at (1,0.7) {};
        \draw[teal] (1,0) to (1,0.7);
\end{tikzpicture}}}
\newcommand{\grhhpitchcupout}{\ \raisebox{2ex}{\begin{tikzpicture}[xscale=0.25,yscale=-0.3,thick,baseline]
             \draw[red] (0,0) to[out=90,in=-180] (1,1.3);
	       \draw[red] (2,0) to[out=90,in=0] (1,1.3);
              \node[rhdot] at (1,1.3) {};
             \draw[teal] (1,0) to (1,0.7);
	       \node[ghdot] at (1,0.7) {};
\end{tikzpicture}}}
\newcommand{\bgrhpitchcupout}{\ \raisebox{2ex}{\begin{tikzpicture}[xscale=0.25,yscale=-0.3,thick,baseline]
             \draw[blue] (0,0) to[out=90,in=-180] (1,1.3);
	       \draw[blue] (2,0) to[out=90,in=0] (1,1.3);
              \node[bhdot] at (1,1.3) {};
	       \node[dot, teal] at (1,0.7) {};
        \draw[teal] (1,0) to (1,0.7);
\end{tikzpicture}}}
\newcommand{\0}{\ \raisebox{-1ex}{\begin{tikzpicture}[xscale=0.25,yscale=0.3,thick,baseline]
 \draw (0,0) -- (0,2); 
\end{tikzpicture}}}
\title{\textbf{The Last Three T-degrees in Triply-Graded Link Homology}}
\date{\relax }
\author{Cailan Li}
\begin{document}

\maketitle
\begin{abstract}
 We investigate the structure of reduced triply graded link homology $\overline{\hhh}$ in the top/bottom three $T-$degrees for links arising as closures of positive/negative braids. Using a diagrammatic approach to the Hochschild cohomology of Soergel bimodules, we provide explicit computations of $\overline{\hhh}$ as $R-$modules in these degrees. Our results reveal that the homology here is often zero, especially in the negative braid case, and display striking uniformity.
\end{abstract}

\tableofcontents


\section{Introduction}

Link homology theories provide powerful link invariants that refine classical link polynomials. The field began with Khovanov’s categorification of the Jones polynomial $J(L)$ \cite{Kho00}, known today as Khovanov homology. The HOMFLY polynomial is a generalization of the Jones polynomial and in \cite{Kho07}, Khovanov constructed a triply-graded link homology $\overline{\hhh}$\footnote{This is reduced $\hhh$ and all computations in this paper are for the reduced theory.} which categorifies the HOMFLY polynomial. The HOMFLY polynomial can be constructed as a two-variable trace on the type $A$ Hecke algebra and it is through this lens that Khovanov's construction categorifies.

\subsection{Background}

The type $A$ Hecke algebra $\mathbf{H}_{S_n}$, being a quotient of the braid group $B_n$, has a standard presentation with generators $\set{\sigma_s}_{s\in S}$ where $S=\set{1,\ldots,n-1 }$. It also has a Kazhdan-Lusztig presentation over $\mathbb{Z}[v^{\pm 1}]$ with generators $\set{b_s}_{s\in S}$. In \cite{Soe90}, Soergel constructs $\sbim_n$, a certain full graded subcategory of $R-$bimodules where $R=\kb[\alpha_1, \ldots, \alpha_{n-1}]$ with grading $|\alpha_i|=2$. Known today as (type A) Soergel Bimodules, $\sbim_n$ is monoidally generated via $\otimes_R$ by objects $\set{B_s}_{s\in S}$ and Soergel shows that the split Grothendieck group $K_\oplus(\sbim_n)$ is isomorphic to $\mathbf{H}_{S_n}$. $\sbim_n$ is idempotent complete, which yields an isomorphism $K_\oplus(\sbim_n)\cong K_\Delta(K^b(\sbim_n))$. This is the starting point for Khovanov's construction of $\overline{\hhh}$ which proceeds as follows.

\begin{itemize}
\item Given braid $\beta\in B_n\leadsto$ Rouquier complex 
 $F^\bullet_{\beta}   \in K^b(\sbim_n)$ \cite{Rou04} defined on generators
 $$F^\bullet_{\sigma_s}=\un{B_s}\xrightarrow{\rcounit}R(1), \qquad F^\bullet_{\sigma_s^{-1}}=R(-1)\xrightarrow{\runit} \un{B_s}$$
 and extended via $F^\bullet_{\beta \beta^\prime}=F^\bullet_{\beta}\otimes_R F^\bullet_{\beta^\prime}$. The underline indicates cohomological degree 0.
 \item For each $k\in \mathbb{Z}^+$, apply the Hochschild cohomology functor $\hh^k(-):=\ext_{R^e}^{k}(R,-)$ to $F^\bullet_{\beta} $ \textit{termwise}. Get complex \vspace{-1ex}
        \[ \hh^k(F^\bullet_\beta)= \ [ \ \ldots\to \hh^k( F^i_{\beta}) \xrightarrow{\hh^k(d_{F^\bullet_{\beta} })} \hh^k( F^{i+1}_{\beta})   \to \ldots \vspace{-2ex} \ ]  \]
\item Taking cohomology $\leadsto$ vector space ($R-$module) \vspace{0.5ex}$\overline{\hhh}(\beta)$ with 3 gradings:  $A=$ Hochschild degree (which $k$) , $T=$ homological degree (which $i$) , $Q=$ grading from $R$. Setting $T=-1$ recovers the HOMFLY polynomial of $\widehat{\beta}$ after a grading shift . 
 \end{itemize}

Unlike Khovanov homology, $\overline{\hhh}(\beta)$ is substantially more difficult to compute explicitly. As seen above, it's a homology of homology construction, and so working with the terms of the complex and the differential becomes unwieldy fast. As an illustration, the smallest nontrivial link is the Hopf link, $H=\widehat{\sigma_1^2}$. Here $n=2$, $R=\kb[\alpha_1]$ and $B_s=R\otimes_{\kb[\alpha_1^2]}R(1)$. When computing the $A=1$ part of $\overline{\hhh}(\sigma_1^2)$, chain maps, chain homotopies, and their interaction with the differential will present immediate technical challenges. This underscores the computational challenges inherent to $\overline{\hhh}$ even in minimal examples. The answer, using diagrammatics can be found in \cite[Appendix B]{Li22}. In contrast, the complex computing Khovanov homology of $H$ is just
\[ \underline{\Zb(1)\oplus \Zb(-1) \oplus \Zb(-1) \oplus \Zb(-3)} \xrightarrow{\begin{pmatrix}
    1 & 0 & 0 & 0 \\
    0 & 1 & 1 & 0 \\
    1 & 0 & 0 & 0 \\
    0 & 1 & 1 & 0
\end{pmatrix}} \paren{\Zb(1)\oplus \Zb(-1)} \oplus \paren{\Zb(1)\oplus \Zb(-1)} \xrightarrow{\begin{pmatrix}
    0 & 0 & 0 & 0 \\
    1 & 0 & -1 & 0 \\
    1 & 0 & -1 & 0 \\
    0 & 1 & 0 &-1
\end{pmatrix}}\Zb(3)\oplus \Zb(1) \oplus \Zb(1) \oplus \Zb(-1) \]
which one obtains directly from the definition. We summarize the differences between the theories in the table below. \vspace{-1ex}
{\renewcommand{\arraystretch}{1.2} 

\begin{table}[H]
\centering
\begin{tabular}{c|c|c}
 & Khovanov & $\overline{\hhh}$  \\\hline
\rule{0pt}{2ex}Terms of complex & $\otimes$ of $\Zb[x]/(x^2)$ &  $\ext^k_{R^e}(R,B_iB_j\ldots)=?$ \\\hline 
\multirow{2}{*}{Differential }& \multirow{2}{*}{from Frobenius algebra} &  Yoneda product$=?$\\
& &  $\ext^k(R, F^i_{\beta} )\times \hom(F^i_{\beta}, F^{i+1}_{\beta})\to\ext^k(R, F_\beta^{i+1})$
\end{tabular}
\caption{The $?$ reflects the fact that, in general, we lack an explicit description of these objects. }
\end{table}
\vspace{-2ex}
 
} 
Let $T(m,n)=(\sigma_1 \ldots \sigma_{m-1})^n$. In \cite{EH19} Elias and Hogancamp made a breakthrough   by developing a method to compute $\overline{\hhh}(T(n,n))$ where $n\ge 0$. Since then there have been other advancements, such as
 \begin{itemize}
 \item Using \cite{EH19}, \cite{HM19} computed  $\overline{\hhh}(T(m,n)) \ \forall m,n\ge 0$ via a recursion. 
 \item Using \cite{Ras15} reformulation of $\overline{\hhh}$, \cite{NS24} wrote an algorithm to compute $\overline{\hhh}(K_{\le 11})$ where $K_{\le 11}$ is a \underline{knot} with $\le 11$ crossings.
 \end{itemize}
 However, the methods above have limitations. Specifically,
\begin{itemize}
    \item \cite{HM19} needs $\overline{\hhh}(T(m,n))$ to be parity and all intermediate steps of the recursion to be parity. This is seldomly observed. For $m,n$ not coprime, $\overline{\hhh}$ for the negative torus link $T(m,-n)$ is not parity. 
    \item \cite{NS24} needs $\overline{\hhh}(\beta)$ to be a f.d. vector space for their algorithm to terminate. When $\widehat{\beta}=L$ is a link $\overline{\hhh}(\beta)$ is not f.d. Memory issues also constrain the calculations to $\le 11$ crossings.
\end{itemize}
For example, the 6 crossing link $\overline{\hhh}(T(3,-3))$ does not satisfy either condition above and was only just recently computed in \cite[Appendix B]{Li22}. The approach involved developing a diagrammatic calculus for the right-hand side of the table (for rank 2 Soergel bimodules), enabling explicit computations of $\overline{\hhh}$ in specific cases. In particular this method circumvents the limitations above. Such a diagrammatic calculus was developed for the $k=0$ case in the works of Elias, Khovanov, and Williamson \cite{EK10}, \cite{DC}, \cite{SC} and has found remarkable applications in representation theory in the past decade or so. See \cite{HodgeSoergel}, \cite{Wtorsionexplod}, \cite{KoszulAMRW},\cite{LW22}, \cite{EQ23}, \cite{BCH23} for instance. \\

In \cite{M}, Makisumi initiated the diagrammatic study of higher Ext groups between Soergel Bimodules in the hopes of finding a triply graded analogue of the Koszul duality in \cite{KoszulAMRW}. \cite{Li22} builds upon \cite{M} and gave the first novel diagrammatic computations of $\overline{\hhh}$ for certain $3-$strand braids. In this paper, we further illustrate the power of the diagrammatics in \cite{Li22} by computing $\overline{\hhh}$ of any positive or negative $n$‑strand braid in prescribed $T$‑degrees.

\subsection{Main Results}
Let $|\beta|$ be the number of terms in the expression $\beta\in B_n$. From the definition it is clear that for a positive braid $\beta\in B_n^+$, $\overline{\hhh}(\beta)^{T=i}$ is supported in degrees $0\le i\le |\beta|$; similarly, for a negative braid $\alpha\in B_n^-$, $\overline{\hhh}(\alpha)^{T=i}$ is supported in degrees $-|\alpha|\le i\le 0$. This paper establishes the following

\begin{Maintheorem}
    We compute $\overline{\hhh}^{A,T, Q}(\beta)$ as an $R-$module in the three highest (lowest) $T-$degrees for any positive (negative) braid $\beta\in B_n$ for all values of $ A, Q$.
\end{Maintheorem}
This result combines \cref{mainpostheorem}, \cref{mainnegtheorem}, \cref{mainpostheorem2}, \cref{mainnegtheorem2}, \cref{theorem:hhhneg2} and the results in \cref{clossect}. The complex computing $\overline{\hhh}$ for a negative link is dual as an $R-$module to the complex for the corresponding positive link (see \cref{lem:dualcomplex}). However, previous computations of $\overline{\hhh}$ were only able to compute $\overline{\hhh}$ as a vector space. Because our methods compute $\overline{\hhh}$ as an $R-$module, this enables us to establish results for negative braids via \cref{lem:dualcomplex}. Our results are orthogonal to those of \cite{HM19}, as seen in the figure \vspace{-2ex}below\footnote{$\overline{\hhh}$ of negative torus knots is just a regrading of $\overline{\hhh}$ for positive torus knots}. 
\begin{figure}[H]
\centering
\begin{tikzpicture}[scale=0.5,>=stealth]
  \draw[->] (-0.5, 0) -- (5, 0) node[right] {Positive Braids};
  \draw[->] (0, -0.5) -- (0, 5) node[above] {$T-$degree};


  \foreach \y in {3, 3.7, 4.4}{
    \draw[red] (-0.1, \y) -- (5, \y); }
    
  \draw[thick, teal] (2.8, 0) -- (2.8, 5.2); 
 \node[left] at (-0.1,4.4) {$|\beta|\hspace{0.5ex}\phantom{-0}$};
 \node[left] at (-0.1,3.7) {$|\beta|-1$};
 \node[left] at (-0.1,3) {$|\beta|-2$};
 \node at (2.6,-0.5) {$T(m,n) \ m,n \ge 0$};
  \foreach \x in {2.3, 2.32,..., 2.7}
  {
    \draw[teal,opacity=0.3] (\x, 0) -- (\x, 5.2);
  }

  \foreach \x in {2.9, 2.92,..., 3.3}
  {
    \draw[teal,opacity=0.3] (\x, 0) -- (\x, 5.2);
  }
\end{tikzpicture}\qquad \qquad
\begin{tikzpicture}[scale=0.5,>=stealth]
  \draw[->] (-0.5, 0) -- (5.2, 0) node[right] {Negative Braids};
  \draw[->] (0, 0.5) -- (0, -5) node[below] {$T-$degree};


  \foreach \y in {-3, -3.7, -4.4}{
    \draw[red] (-0.1, \y) -- (5, \y); }
    
  \draw[thick, teal] (2.8, 0) -- (2.8, -5.2); 
 \node[left] at (-0.1,-4.4) { $-|\alpha|\hspace{0.5ex}\phantom{+2} $};
 \node[left] at (-0.1,-3.7) {$-|\alpha|+1$};
 \node[left] at (-0.1,-3) {$-|\alpha|+2$ };
 \node at (2.8,0.6) {$\substack{T(m,-n) \ m,n \ge 0 \\ (m,n)=1 }$};
  \foreach \x in {2.6, 2.62,..., 2.8}
  {
    \draw[teal,opacity=0.3] (\x, 0) -- (\x, -5.2);
  }

  \foreach \x in {2.8, 2.82,..., 3}
  {
    \draw[teal,opacity=0.3] (\x, 0) -- (\x, -5.2);
  }
\end{tikzpicture}
\caption{The green area represents the results of \cite{HM19} while the red lines show our new results.} \vspace{-2ex}
\end{figure}
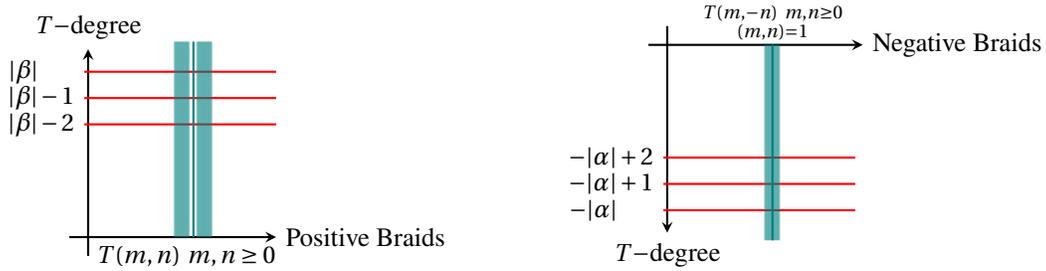
As previously alluded, our results show that $\overline{\hhh}$ in these $T-$degrees is rather uniform. Provided the link is ``suitably connected", $\overline{\hhh}(\beta)$ is independent of the choice of $\beta$ in these degrees. However there is an asymmetry for positive and negative braids at $T-$degree $|\beta|-2$. For positive braids, \cref{theorem:hhhpos2} and \cref{mainpostheorem2} have the same conclusion. For negative braids $\alpha\in B_3^-$, $\overline{\hhh}(\alpha)$ is nonzero at $A=2$ (see \cref{theorem:hhhneg2}), whereas for $\alpha\in B_n^-$, $n\ge 4$, $\overline{\hhh}(\alpha)$ vanishes in all $A-$degrees (see \cref{mainnegtheorem2}). This asymmetry is most likely related to the asymmetry of the Markov Move II for $\overline{\hhh}$. See \cite[3.1(c), 3.1(d)]{Hog18} for a pictorial description. 

\subsubsection{Applications}

Given a braid $\beta$, to obtain a link invariant for the braid closure $L=\widehat{\beta}$ one needs to normalize $\overline{\hhh}(\beta)$. Following \cite{HM19}, the normalization is given by

\begin{definition}
\label{def1}
    For $\beta\in B_n$, let $e=n_+-n_-$ where $n_+$($n_-$) is the number of positive(negative) crossings in $\beta$. Let $L=\widehat{\beta}$ and $c$ the number of components of $L$. The reduced superpolynomial of $\mathscr{P}_L$ is
    \[ \mathscr{P}_L= (Q^{-4}AT)^{\frac{e+c-n}{2}} Q^e T^{-e} \paren{\sum_{k, i, j} \dim_\kb \overline{\hhh}(\beta)_{A=k, T=i, Q=j} A^kT^i Q^j} \]
    $\mathscr{P}_L$ is a link invariant and $\mathscr{P}_L|_{T=-1, A=-a^{2}q^2, Q=q}=\textnormal{HOMFLY}(L)$ up to a unit in $\Zb[q^{\pm1}, a^{\pm 1}]$.
\end{definition}

Combining this with our main results, we obtain the following necessary conditions for a link to be the closure of a positive braid.

\begin{theorem}
\label{posbraidcrit}
   Given $L=\widehat{\beta}$ for $\beta\in B_n$, let $M$ be the highest power of $T$ appearing in $\mathscr{P}_L$. Suppose $L$ is a positive braid link, e.g. $L=\widehat{\beta^\prime}$ where $\beta^\prime\in B_r^+$ for some $r$. Then
   \begin{enumerate}[(1)]
       \item If $L$ is not a disjoint union, then $\mathscr{P}_L$ is necessarily of the form
        \[  T^M(A^MQ^{-4M}) +O(T^{\le M-2}) \]
        \item If $L$ is prime then $\mathscr{P}_L$ is necessarily of the form
        \[  T^M(A^MQ^{-4M})\paren{1+T^{-2}Q^4+T^{-2}A}+O(T^{\le M-3}) \]
   \end{enumerate}
\end{theorem}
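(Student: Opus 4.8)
The plan is to translate the combinatorial statement about $\mathscr{P}_L$ into the language of $\overline{\hhh}(\beta)$ via \cref{def1}, and then invoke the Main Theorem. First I would fix a positive braid presentation $\beta' \in B_r^+$ for $L$ with $|\beta'| = N$, so that the highest $T$-degree in which $\overline{\hhh}(\beta')$ is supported is $T = N$. Since $L$ is a positive braid link, $n_- = 0$, so $e = N$ and the normalization prefactor in \cref{def1} becomes $(Q^{-4}AT)^{(N + c - r)/2} Q^N T^{-N}$. Reading off the top powers of $T$: if $\overline{\hhh}(\beta')$ in $T$-degrees $N, N-1, N-2$ (and $N-3$ for part (2)) is known explicitly from the Main Theorem — specifically from \cref{mainpostheorem}, \cref{mainpostheorem2}, \cref{theorem:hhhpos2} — then $M = N - (N+c-r)/2 \cdot 0 + \ldots$; more precisely, the highest power of $T$ actually appearing in $\mathscr{P}_L$ is $M = N - N = 0$? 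No — one must be careful: the $T^{-N}$ shift and the $T^{(N+c-r)/2}$ from the prefactor combine with the internal $T$-degrees of $\overline{\hhh}$, and the relevant claim is that $M$ equals the contribution of the top internal $T$-degree $N$. I would compute $M$ directly and verify it is $(c - r)/2 + $ (top internal degree shifted), pinning down the leading monomial.

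The heart of the matter is then the explicit form of $\overline{\hhh}(\beta')$ in the top three (resp.\ four) $T$-degrees. For part (1), the key input is that when $L$ is not a disjoint union — equivalently, the braid closure is "suitably connected" in the sense described after the Main Theorem — \cref{mainpostheorem} gives $\overline{\hhh}(\beta')^{T = N}$ as a free rank-one $R$-module concentrated in a single $(A,Q)$-bidegree, namely $A = $ (some fixed value determined by $r$ and the Hochschild degree of the top term of the Rouquier complex) and $Q$ matching, while $\overline{\hhh}(\beta')^{T = N-1} = 0$. Dividing out by $R$ — i.e., taking $\dim_\kb$ as in \cref{def1}, which records the graded dimension $\frac{1}{(1-Q^2)^{?}}$ or a polynomial, depending on whether the module is free — and multiplying by the prefactor yields $T^M (A^M Q^{-4M}) + O(T^{\le M - 2})$. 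The vanishing at $T = N-1$ is exactly what produces the gap "$O(T^{\le M-2})$" rather than "$O(T^{\le M - 1})$". For part (2), primality (a stronger connectivity hypothesis) lets one use \cref{mainpostheorem2} and \cref{theorem:hhhpos2} to pin down $\overline{\hhh}(\beta')$ in degrees $N-2$ and $N-3$: the $T = N-2$ part should be a sum of two free rank-one $R$-modules, one in $A$-degree contributing $Q^4 T^{-2}$ (relative to the top monomial, after the prefactor acts) and one contributing $A T^{-2}$, while $T = N-3$ contributes nothing, giving the factor $\paren{1 + T^{-2}Q^4 + T^{-2}A}$ and the error $O(T^{\le M-3})$.

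The main obstacle I anticipate is bookkeeping the grading shifts correctly: \cref{def1}'s prefactor $(Q^{-4}AT)^{(e+c-n)/2} Q^e T^{-e}$ mixes all three gradings, and one must verify that the uniform description of $\overline{\hhh}$ in the top $T$-degrees — which is stated intrinsically, independent of $r$ and the particular braid — interacts with this $r$-dependent and $c$-dependent shift to produce an $r$-independent and $c$-independent leading behavior of $\mathscr{P}_L$. This is forced by the fact that $\mathscr{P}_L$ is a genuine link invariant (so the Markov moves must absorb all $r$-dependence), but checking it concretely requires knowing precisely which $(A, Q)$-bidegrees the top $\overline{\hhh}$-classes live in as a function of $r$; I would extract this from the structure of the top terms of the Rouquier complex $F^\bullet_{\beta'}$ (the term $R(|\beta'|)$ in cohomological degree $|\beta'|$, whose Hochschild cohomology is $\Lambda[\xi_1, \ldots, \xi_{r-1}] $-ish, i.e.\ an exterior algebra on $r-1$ generators in $A$-degree $1$) together with the computation in \cref{mainpostheorem} identifying which of these classes survive. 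A secondary subtlety is the precise meaning of "suitably connected" versus "prime" as hypotheses — I would make sure part (1) only needs the weaker connectivity (not a disjoint union) while part (2) genuinely uses primality, matching exactly the hypotheses under which \cref{mainpostheorem}, \cref{mainpostheorem2} and \cref{theorem:hhhpos2} are proved.
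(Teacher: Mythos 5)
Your overall strategy---translate via \cref{def1} and substitute the Main Theorem computations---is exactly the paper's, and your identification of which $(A,Q)$-bidegrees produce the monomials $1$, $T^{-2}Q^4$, $T^{-2}A$ after the prefactor acts is correct. But there are two genuine gaps. The first and most serious is in part (2): you assert that ``primality \ldots lets one use \cref{mainpostheorem2} and \cref{theorem:hhhpos2}'' without supplying the bridge. Those theorems carry an explicit combinatorial hypothesis (the braid word must contain $\sigma_i\sigma_{i+1}\sigma_i\sigma_{i+1}$ or $\sigma_{i+1}\sigma_i\sigma_{i+1}\sigma_i$ as a subexpression for every $i$), and that hypothesis does not follow formally from $L$ being prime; establishing the equivalence is the content of \cref{primeiff} (built on \cref{lem:csum} and \cref{2lowerlem}), and it is the real work behind the reduction. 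There is also the edge case $L=T(2,k)$, where no positive $2$-strand word can satisfy the hypothesis and one must first stabilize to three strands ($\sigma_1^k\sim\sigma_1^{k-2}\sigma_2\sigma_1\sigma_2$); your proposal does not address this. Similarly, in part (1) you should make explicit that ``not a disjoint union'' forces every generator $\sigma_i$ to occur in $\beta'$, which is the hypothesis of \cref{mainpostheorem} (this step is easy, but it is the only reason the theorem applies).

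The second gap is your description of the top-degree homology as ``a free rank-one $R$-module concentrated in a single $(A,Q)$-bidegree,'' with a hedge about whether $\dim_\kb$ yields a power series or a polynomial. This cannot be left unresolved: \cref{mainpostheorem} gives $\kb(|\beta'|)$, a \emph{one-dimensional} torsion $R$-module, and if the answer were instead free of rank one the graded dimension would contribute a factor $(1-Q^2)^{-(r-1)}$ and the leading term of $\mathscr{P}_L$ would not be the single monomial $T^MA^MQ^{-4M}$. The whole point of the paper computing $\overline{\hhh}$ as an $R$-module is that these top pieces are finite-dimensional, so the sum in \cref{def1} in these degrees is a monomial. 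Finally, a smaller conceptual slip: you claim $T=N-3$ ``contributes nothing'' in order to get the error $O(T^{\le M-3})$. This is backwards---the error term absorbs whatever lives in internal degree $N-3$ and below, which the paper does not compute and which need not vanish; no statement about degree $N-3$ is required.
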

\begin{proof}
    $(1)$ If $L$ is not a disjoint union then it has at least one $\sigma_i$ for all $1\le i \le n-1$. Thus \cref{mainpostheorem} applies to $\beta^\prime$ and noting $e=|\beta^\prime|$ we obtain
    \[ \mathscr{P}_L= (Q^{-4}AT)^{\frac{|\beta^\prime|+c-r}{2}}Q^{|\beta^\prime|} T^{-|\beta^\prime|}\paren{  T^{|\beta^\prime|}Q^{-|\beta^\prime|}+ O(T^{\le |\beta^\prime|-2 }) }\]
    $(2)$ If $L$ is prime then \cref{primeiff} says there's a positive braid representative of $L$ satisfying the conditions of \cref{mainpostheorem2} and after normalizing as in the proof of (1), we obtain the result. Note such a braid representative also exists for $T(2,k)$ as $\sigma_1^k\sim \sigma_1^k \sigma_2\sim \sigma_1^{k-1}\sigma_2 \sigma_1 \sim \sigma_1^{k-2}\sigma_2 \sigma_1 \sigma_2$ .
\end{proof}

One should compare the result above with the computations of $\overline{\hhh}$ done in \cite[Appendix A]{EH19} for low crossing torus knots. For each computation, after factoring out the largest power of $t$, the two highest powers of $t$ are $1+qt^{-1}+at^{-1}=1+T^{-2}Q^4+T^{-2}A$. From \cref{t2ksect} we also see that 
\[T(AQ^{-4})(1+T^{-2}Q^4+T^{-2}A)=\mathscr{P}_{T(2,3)} \]
and so the theorem above says that after a grading shift, we have always have a copy of $\mathscr{P}_{T(2,3)}$ in $\mathscr{P}_{L}$ for any nontrivial positive prime braid link $L$. This leads to two natural questions.

\begin{question}
\label{ques2}
    Is there a spectral sequence from $\mathscr{P}_L$ to $\mathscr{P}_{T(2,3)}$ for any nontrivial positive braid link $L$?
\end{question}

Spectral sequences in link homologies are at the heart of many deep results in knot theory. For instance, Piccirillo's recent resolution of a long-standing unsolved problem in knot theory \cite{Pic20} relies on a link invariant constructed by Rasmussen \cite{Ras10} via a spectral sequence to ``homology" of the unknot. For such a spectral sequence to exist, 
homology of the unknot cannot be larger than homology of any knot. As demonstrated above, $\overline{\hhh}(T(2,3))$ is smaller than $\overline{\hhh}(\beta)$ for $\beta$ a nontrivial positive braid allowing for the possibility of \cref{ques2}.\\

For negative braids on 3 strands, an analogous result to \cref{posbraidcrit} holds using \cref{mainnegtheorem} and \cref{theorem:hhhneg2}. However, no such extension exists for braids on $\ge 4$ strands, since \cref{mainnegtheorem} and \cref{mainnegtheorem2} imply vanishing in the first three $T$-degrees. However for knots $K$ there is a workaround. Using \cref{lem:dualcomplex} and a bit of commutative algebra one obtains
\[ \mathscr{P}_K(A, T,Q)=\mathscr{P}_{K^\vee}(A^{-1}, T^{-1},Q^{-1}) \]
where $K^\vee$ is the mirror of $K$. Thus we have

\begin{theorem}
   Given $L=\widehat{\beta}$ for $\beta\in B_n$, let $-M$ be the lowest power of $T$ appearing in $\mathscr{P}_L$. If $L$ is a prime negative braid knot, then $\mathscr{P}_L$ is necessarily of the form
        \[  T^{-M}(A^{-M}Q^{4M})\paren{1+T^{2}Q^{-4}+T^{2}A^{-1}}+O(T^{\ge -M+3}) \]
\end{theorem}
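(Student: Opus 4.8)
The plan is to deduce this from the positive-braid statement \cref{posbraidcrit}(2) by passing to the mirror. First I would note that if $\beta\in B_r^-$ is a negative braid with $\widehat{\beta}=L$, then the word $\beta'$ obtained from $\beta$ by replacing each generator $\sigma_i^{-1}$ with $\sigma_i$ is a positive braid in $B_r^+$, and its closure $\widehat{\beta'}$ is the mirror $K^\vee$ of $K=L$. Since primeness of a link is invariant under mirroring and $K^\vee$ is again a knot, $K^\vee$ is a prime positive braid knot; moreover $\overline{\hhh}(\beta')$ is finite-dimensional, so $\mathscr{P}_{K^\vee}$ is a genuine Laurent polynomial with a well-defined highest $T$-power, call it $M'$.

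Next I would apply \cref{posbraidcrit}(2) to $K^\vee$, giving
\[ \mathscr{P}_{K^\vee}(A,T,Q)=T^{M'}\paren{A^{M'}Q^{-4M'}}\paren{1+T^{-2}Q^4+T^{-2}A}+O\paren{T^{\le M'-3}}. \]
Then I would invoke the mirror identity $\mathscr{P}_K(A,T,Q)=\mathscr{P}_{K^\vee}(A^{-1},T^{-1},Q^{-1})$ recorded just above the statement, which is the $R$-module duality of \cref{lem:dualcomplex} together with the commutative-algebra computation relating $\ext^\bullet$ of a module to $\ext^\bullet$ of its dual. Substituting $A\mapsto A^{-1}$, $T\mapsto T^{-1}$, $Q\mapsto Q^{-1}$ sends the leading monomial $T^{M'}A^{M'}Q^{-4M'}$ to $T^{-M'}A^{-M'}Q^{4M'}$, the factor $1+T^{-2}Q^4+T^{-2}A$ to $1+T^{2}Q^{-4}+T^{2}A^{-1}$, and the error $O(T^{\le M'-3})$ to $O(T^{\ge -M'+3})$; in particular the lowest $T$-power occurring in $\mathscr{P}_K$ is $-M'$, so $M=M'$, and the claimed form of $\mathscr{P}_L$ drops out.

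The bookkeeping of gradings under the substitution is routine, and the substantive content is already packaged in \cref{posbraidcrit}(2) and in the mirror identity, so there is no genuine obstacle here; the one point that genuinely requires care is that the mirror identity holds only for knots — the writhe-dependent prefactor $Q^eT^{-e}$ and the shift $(Q^{-4}AT)^{\frac{e+c-n}{2}}$ in \cref{def1} are arranged so that the two normalizations match precisely when $c=1$ — which is why the hypothesis that $L$ is a knot (and not merely a link) cannot be dropped.
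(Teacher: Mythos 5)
Your proposal is correct and follows exactly the route the paper intends: mirror the negative braid to a prime positive braid knot, apply \cref{posbraidcrit}(2) to it, and transfer back via the identity $\mathscr{P}_K(A,T,Q)=\mathscr{P}_{K^\vee}(A^{-1},T^{-1},Q^{-1})$ stated just before the theorem. The only quibble is your parenthetical diagnosis of \emph{why} the mirror identity requires a knot (the normalization prefactors alone are not the whole story; finite-dimensionality of $\overline{\hhh}$ for knots is what makes the duality of \cref{lem:dualcomplex} descend to the simple variable substitution on Poincar\'e series), but this does not affect the validity of your argument since you use the identity as the paper records it.
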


\subsection{Outline}

\begin{itemize}
    \item In \cref{prelimsect} we first review some tools needed for our computations. \cref{clossect} establishes some results on the relationship between connect sums and the corresponding braid expression. \cref{t2ksect} contains the computation of the $R-$module structure on $\overline{\hhh}(T(2, \pm k))$.
    \item \cref{betabeta1sect} contains the computation of $\overline{\hhh}(\beta)$ in $T-$degrees $|\beta|$ and $|\beta|-1$. Here we make the key observation that when $\beta$ is a positive braid, the last 3 terms of ``$\hh^\bullet(F_\beta^\bullet)$" is a direct sum of multiple subcomplexes each of which is isomorphic to a Koszul complex, enabling quick computation of $\overline{\hhh}(\beta)$ in the last 2 $T-$degrees. 
    \item \cref{3strandsect} contains the computation of $\overline{\hhh}(\beta)$ in $T-$degree $|\beta|-2$ for $\beta\in B_3^+$. ``$\hh^\bullet(F_\beta^\bullet)$" stops looking Koszul at this point so we have to explicitly calculate the kernels and images. \cref{gensect} contains the analogous computation for $\beta\in B_n^+$ where we essentially reduce to the 3 strand case. 
    \item \cref{basisappend} uses \cite{Li22} to give an explicit diagrammatic basis for the Hochschild cohomology of Soergel Bimodules used in the computations in previous sections. We give a detailed proof of how to derive the basis for $\hh^\bullet(B_s B_t B_s)$ but this can be taken as a black box. Diagrammatics are simple and incredibly efficient in encapsulating information (especially since the underlying algebraic objects here are chain maps modulo chain homotopies) and so \cref{basisappend} will likely be of independent interest and utility to other researchers in the area. 
\end{itemize}

\subsection{Acknowledgments}

The author thanks Ben Elias for having the courage to apply a Reidemeister move. We would also like to thank Eugene Gorsky for answering his questions and Mikhail Khovanov and Chun-Ju Lai for feedback on an early draft of the paper. Finally, we express our gratitude to the Institute of Mathematics, Academia Sinica, for their exceptional research environment where this work was done.

\section{Preliminaries}
\label{prelimsect}
We urge the reader to read \cref{basisappend} first as the diagrammatic basis for the Hochschild cohomologies of Soergel Bimodules are the backbone of the calculations in this paper. 

\subsection{Notation}

\begin{itemize}
    \item Let $\sigma_i$ be the $i-$th positive crossing between the $i-$th and $(i+1)-$st strand and let $\sigma_i^{-1}$ be the $i-$th negative crossing as seen below \vspace{-1.5ex}
\begin{center}
    \raisebox{4ex}{$\sigma_i= \ $}
\begin{tikzpicture}[scale=0.6]
\braid[number of strands=4, gap=0.2] (braid) a_2^{-1} ;
\node at (1.5,-0.9) {$\cdots$};
\node at (3.5,-0.9) {$\cdots$};
\node at (2,-1.9) {$i$};
\node at (3.2,-1.9) {$i+1$};
\end{tikzpicture} \qquad \quad  \raisebox{4ex}{$\sigma_i^{-1}= \ $}
\begin{tikzpicture}[scale=0.6]
\braid[number of strands=4, gap=0.2] (braid) a_2 ;
\node at (1.5,-0.9) {$\cdots$};
\node at (3.5,-0.9) {$\cdots$};
\node at (2,-1.9) {$i$};
\node at (3.2,-1.9) {$i+1$};
\end{tikzpicture} \vspace{-2ex}
\end{center}
   \item Composition for us will always read bottom to top as we read right to left. For example, $\sigma_2 \sigma_1$ will be \vspace{-1ex}
\begin{center}
\begin{tikzpicture}[scale=0.7]
    \braid[number of strands=3, gap=0.2] (braid)  a_2^{-1}a_1^{-1}  ;    
\end{tikzpicture}
\end{center}
   \item Markov Move I will be cyclic permutation of the braid word, ie. $\sigma_1\sigma_2\sim \sigma_2 \sigma_1$ while Markov Move II will be de-stabilization, i.e. $\sigma_1^2\sigma_2\sim \sigma_1^2$.
   \item $(1)$ will shift the $Q-$grading down 1: $M(1)_i=M_{i+1}$.  
   \item $L_1\# L_2$ denotes the connect sum of $L_1$ and $L_2$.
   \item The positive braid index $b_p(L)$ of a link $L$ is the minimal $n$ s.t. $\exists \beta\in B_n^+$ s.t. $\widehat{\beta}=L$. Note, this may be larger than the regular braid index for $L$.
   \item By subexpression, we mean \un{non-contiguous} subexpression. Specifically, a subexpression of $a_1\ldots a_k$ is a word of the form $a_1^{\epsilon_1}\ldots a_k^{\epsilon_k}$ where $\epsilon_i\in \set{0,1}$. For example, $ss$ is a subexpression of $stts$.
\end{itemize}

\subsection{Diagrammatics}

We forgo a complete review of Soergel Calculus\cite{SC} and Ext Soergel Calculus\cite{Li22}, presenting only the generators and relations essential to this paper. For every ``color" or simple transposition $s\in S=\set{\sigma_1, \ldots, \sigma_{n-1}}$ we have

\begin{center}
        \begin{tabular}{c|c|c|c|c|c}
            generator &  \runit  & \raisebox{1ex}{\rcounit} & \rcup&  \rhzero &  \rhunit \ := 
\begin{tikzpicture}[xscale=0.3,yscale=0.5,thick,baseline]
 \draw[red] (0,-0.5) -- (0,0.75); \node[rhdot] at (0,0.2) {}; \node[rdot] at (0,-0.5) {};
\end{tikzpicture} \\[1ex]
            name & Startdot & Enddot & Cup &  Hochschild dot &  Hochschild Startdot \\[1ex]
            $(A,Q)$ degree& $(0,1)$  & $(0,1)$ & $(0, 0)$ & $(1,-4)$ & $(1, -3)$
        \end{tabular}
\end{center}

We also need the following generators. Let $R=\kb[\alpha_1, \ldots, \alpha_{n-1}]$ and $\Lambda=\Lambda^\bullet[\alpha_1^\vee, \ldots, \alpha_{n-1}^\vee]$

\begin{center}
        \begin{tabular}{c|c|c}
            generator &  $\boxed{f} \quad \forall f\in R$ homogeneous  & $\dboxed{\xi } \quad \forall \xi\in \Lambda$ homogeneous   \\[1ex]
            name & Box & Exterior Box    \\[1ex]
            bidegree& $(0,\deg f)$  & $(|\xi|,-2|\xi|)$ 
        \end{tabular}
    \end{center}

If one takes \cref{basisappend} as a black box (in particular \cref{hhsts}), then the computations in this paper should\footnote{You might need more, please see loc. cit. for additional relations if necessary.} only require the following relations

\begin{itemize}
    \item \textbf{Barbell} and \textbf{Hochschild Barbell}
    \[ \begin{tikzpicture}[xscale=0.3,yscale=0.5,thick,baseline]
 \draw[red] (0,-0.5) -- (0,0.75);  \node[rdot] at (0,-0.5) {}; \node[rdot] at (0,0.75) {};
\end{tikzpicture}= \boxed{\alpha_s} \qquad \qquad \qquad \begin{tikzpicture}[xscale=0.3,yscale=0.5,thick,baseline]
 \draw[red] (0,-0.5) -- (0,0.75);  \node[rhdot] at (0,-0.5) {}; \node[rdot] at (0,0.75) {};
\end{tikzpicture}= \dboxed{\alpha_s^\vee}   \]
  \item \textbf{Polynomial Forcing} and \textbf{Exterior Polynomial Forcing}
\[ \begin{array}{c}\begin{tikzpicture}[scale=0.5,thick,baseline]
 \draw[red] (0,-1.5) -- (0,1.5);
 \draw (0.5,-0.5) rectangle (1.5,0.5); \node at (1,0) {$f$};
\end{tikzpicture}\end{array}
=
\begin{array}{c}\begin{tikzpicture}[scale=0.5,thick,baseline]
 \draw[red] (0,-1.5) -- (0,1.5);
 \draw (-1.9,-0.5) rectangle (-0.5,0.5); \node at (-1.2,0) {$s(f)$};
\end{tikzpicture}\end{array}
+
\begin{array}{c}\begin{tikzpicture}[scale=0.5,thick,baseline]
 \draw[red] (0,0.8) -- (0,1.5); \node [rdot] at (0,0.8) {};
 \draw(-1,-0.5) rectangle (1,0.5); \node at (0,0) {$\partial_s(f)$};
 \draw[red] (0,-1.5) -- (0,-0.8); \node[rdot] at (0,-0.8) {};
\end{tikzpicture}\end{array} \qquad \qquad \qquad \begin{array}{c}\begin{tikzpicture}[scale=0.5,thick,baseline]
 \draw[red] (0,-1.5) -- (0,1.5);
 \draw[dashed] (0.5,-0.5) rectangle (1.5,0.5); \node at (1,0) {$\xi$};
\end{tikzpicture}\end{array}
=
\begin{array}{c}\begin{tikzpicture}[scale=0.5,thick,baseline]
 \draw[red] (0,-1.5) -- (0,1.5);
 \draw[dashed] (-1.9,-0.5) rectangle (-0.5,0.5); \node at (-1.2,0) {$s(\xi)$};
\end{tikzpicture}\end{array}
+
\begin{array}{c}\begin{tikzpicture}[scale=0.5,thick,baseline]
 \draw[red] (0,0.8) -- (0,1.5); \node [rhdot] at (0,0.8) {};
 \draw[dashed] (-1,-0.5) rectangle (1,0.5); \node at (0,0) {$\partial_s(\xi)$};
 \draw[red] (0,-1.5) -- (0,-0.8); \node[rdot] at (0,-0.8) {};
\end{tikzpicture}\end{array}\]
\item \textbf{1-color cohomology}
\[ \begin{tikzpicture}[xscale=0.3,yscale=0.5,thick,baseline]
 \draw[red] (0,-0.5) -- (0,0.75);  \node[rdot] at (0,-0.5) {}; 
\end{tikzpicture} \  \scalemath{0.9}{\dboxed{\alpha_s^\vee}}=\begin{tikzpicture}[xscale=0.3,yscale=0.5,thick,baseline]
 \draw[red] (0,-0.5) -- (0,0.75);  \node[rhdot] at (0,-0.5) {}; 
\end{tikzpicture} \ \boxed{\alpha_s}  \]
\end{itemize}

\subsection{Review of \cite{Mal24}}

Given a \un{positive} braid word $\beta\in \mathrm{Br}_{n}$, let $F_\beta^\bullet$ be the associated Rouquier complex and again let $|\beta|$ be the number of terms in the expression $\beta$. Then terms in $F_\beta^\bullet$ are then labeled by the $2^{|\beta|}$ possible subexpresssions of $\beta$. In \cite{Mal24}, they show that $F_\beta^\bullet$ is homotopy equivalent to the smaller complex $R_\beta^\bullet$ where $R_\beta^\bullet$ is the complex  
\[  R_\beta^q:= \bigoplus_{ \substack{  \un{x}\subseteq \beta  \\ |\beta|-|\un{x}|=q}} C_{\un{x}}(q)  \]
where $\un{x}\subseteq \beta$ means that $\un{x}$ is a subword (meaning distinct subexpression) of $\beta$ rather than a subexpression. For example, there is only one subword of length 1 of $ss$ while there are 2 possible subexpressions of $ss$ of length 1. $C_{\underline{x}}=\bs(\un{x^*})(|\un{x^*}|-|\un{x}|)$ where $\un{x^*}$ is obtained from $\un{x}$ by contracting each monotonous subsequence to a single letter. For example if $\beta=stst$ then there are 4 subwords $\un{x}$ of length 2 with corresponding $C_{\underline{x}}$
\[ ss: C_{ss}=B_s(-1) \qquad  st: C_{st}=B_sB_t \qquad ts: C_{ts}=B_tB_s \qquad tt: C_{tt}=B_t(-1)  \]
The differentials $d_{\un{x}}^{\un{z}}: C_{\un{x}}(|\beta|-|\un{x}|)\to C_{\un{z}}(|\beta|-|\un{z}|)$ in $R_\beta^\bullet$ are given as follows. Let $\un{x},\un{z}$ be subwords of $\beta$ and $d_s=\frac{1}{2}(\alpha_s\otimes_s 1 +1\otimes_s \alpha_s)$. Define
\[f_{s,k}=\begin{cases}
    \raisebox{1ex}{\rcounit} & \textnormal{ if }k=0 \\
    \ds & \textnormal{ if }k>0 \textnormal{ and odd} \\
    \rone & \textnormal{ if }k>0 \textnormal{ and even}
\end{cases}\]
Then $ d_{\un{x}}^{\un{z}}\neq 0 \iff \un{z}$ can be obtained from $\un{x}$ by deleting one letter in which case  $\un{x}$ and $\un{z}$ must be of the form
\[ \un{x}=\un{w_1} \underbrace{ss\ldots s}_{k+1} \un{w_2}, \qquad \un{z}=\un{w_1} \underbrace{s\ldots s}_{k} \un{w_2} \]
where $\un{w_1}$ does not end with $s$ and $\un{w_2}$ does not start with $s$. If $k>0$ or $\un{w_1}$ doesn't end with the same letter that $\un{w_2}$ starts with then the differential will be
\[ d_{\un{x}}^{\un{z}}=(-1)^{|\un{w_1}|} \mathrm{id}_{C_{\un{w}}}\otimes f_{s,k} \otimes \mathrm{id}_{C_{\un{w_2}}}\]
If $k=0$ and $\un{w_1}$ ends with the same letter that $\un{w_2}$ starts with then compose the RHS above with the corresponding trivalent vertex. For example, if $\un{x}=\ldots tst\ldots$ and $\un{z}=\ldots tt\ldots$, the differential will look like
\igc{0.45}{blue_trivalent}

For example, if $\beta=sstt$ where $s=\sigma_1, t=\sigma_t$, then $R_\beta^\bullet$ will be ($\un{B_sB_t(-2)}$ is in cohomological degree 0)

\[ \underline{B_sB_t(-2)} \xrightarrow{ \begin{pmatrix}
\rzero \ \dt \\
 \ds \ \bzero
\end{pmatrix} } B_s B_t\oplus B_s B_t \xrightarrow{\begin{pmatrix}
 \rzero \ \bcounit &  0 \\
 \ds \bzero &   -\rzero \dt     \\[1em]
 0       &  \rcounit \bzero        
\end{pmatrix}} B_s(1)\oplus B_sB_t(2)\oplus B_t(1) \xrightarrow{ \begin{pmatrix}
\ds & - \rzero \bcounit & 0 \\
 0  &  \rcounit \bzero  & \dt
\end{pmatrix} } B_s(3)\oplus B_t(3) \xrightarrow{\begin{pmatrix}
 \rcounit &\bcounit 
\end{pmatrix}} R(4) \]

\begin{remark}
    The results in this paper do not strictly need \cite{Mal24}. Summing over subwords instead of subexpressions gets rid of repeated $B_{\un{x}}$. Having repeated $B_{\un{x}}$ clearly doesn't change the images of the differentials and we can reduce the kernels of the differentials in the complex $\hh^k(F_\beta^\bullet)$ to those in $\hh^k(R_\beta^\bullet)$  by using the ``easy kernel" change of basis trick in \cref{fjker} and diagrammatics. However, the framework of \cite{Mal24} streamlines our exposition, and so we adopt it here for clarity.
\end{remark}

\subsection{Closure Results}
\label{clossect}
\begin{lemma}
\label{lem:csum}
    For $\beta\in B_n $, $\widehat{\beta}$ is a connect sum $\iff \exists \beta^\prime\in B_n$ s.t. $\widehat{\beta}=\widehat{\beta^\prime}$ and $\beta^\prime\in B_{k, k+1}$ for any $k$ where $B_{k, k+1}$ is the subset of $B_n$ consisting of words of the form $\beta_1\beta_2$ where $\beta_1\in \abrac{\sigma_1^{\pm 1}, \ldots, \sigma_{k-1}^{\pm 1}}$ and $\beta_2\in \abrac{\sigma_{k}^{\pm 1}, \ldots, \sigma_{n-1}^{\pm 1}}$.
\end{lemma}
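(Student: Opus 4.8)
The plan is to prove the two implications separately. The converse ($\Leftarrow$) is an exercise in drawing the right picture; the forward direction ($\Rightarrow$) is the substantive one and rests on the Birman--Menasco analysis of composite closed braids.

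For ($\Leftarrow$), suppose $\beta' = \beta_1\beta_2 \in B_{k,k+1}$ with $\beta_1 \in \langle \sigma_1^{\pm 1},\ldots,\sigma_{k-1}^{\pm 1}\rangle$ and $\beta_2 \in \langle \sigma_k^{\pm 1},\ldots,\sigma_{n-1}^{\pm 1}\rangle$. I would draw the closed braid $\widehat{\beta'}$ so that the block $\beta_1$ touches only strands $1,\ldots,k$, the block $\beta_2$ touches only strands $k,\ldots,n$, and the two blocks are stacked. Since no crossing involves a strand $<k$ together with a strand $>k$, there is a $2$-sphere $S$ (cut out by an embedded disc in the projection plane) meeting $\widehat{\beta'}$ transversally in exactly two points, both lying on the $k$-th strand, with the $\beta_1$-block together with the closure arcs of strands $1,\ldots,k-1$ on one side, and the $\beta_2$-block together with the closure arcs of strands $k+1,\ldots,n$ on the other. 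Reading off the $(1,1)$-tangles inside and outside $S$ exhibits $\widehat{\beta'}$ as $\widehat{\beta_1}\#\widehat{\beta_2}$, where $\beta_1$ is regarded as an element of $B_k$ and $\beta_2$ as an element of $B_{n-k+1}$ after relabelling its strands $k,\ldots,n$ as $1,\ldots,n-k+1$. I would note that to obtain a \emph{genuine} (non-split) connect summand one wants strand $k$ to be used by both $\beta_1$ and $\beta_2$, and that $k=1$ or $k=n$ yields the trivial decomposition; I would spell out these cases against the paper's conventions for what counts as a connect sum so that the stated equivalence holds on the nose.

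For ($\Rightarrow$), suppose $\widehat{\beta} = L = L_1 \# L_2$. Here is a concrete route: choose minimal braid-index representatives $\gamma_i \in B_{b(L_i)}$ of $L_i$, conjugate each by a braid realising a suitable permutation so that the component being connect-summed contains the strand in position $b(L_1)$ (resp. position $1$), and set $\gamma := \gamma_1\gamma_2$, where $\gamma_1$ acts on strands $1,\ldots,b(L_1)$ and $\gamma_2$ is relabelled to act on strands $b(L_1),\ldots,b(L_1)+b(L_2)-1$. Then $\gamma \in B_{b(L_1)+b(L_2)-1}$ lies in $B_{k,k+1}$ with $k=b(L_1)$, and by the ($\Leftarrow$) picture $\widehat{\gamma} = \widehat{\gamma_1}\#\widehat{\gamma_2} = L$. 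By the Birman--Menasco theorem that braid index is additive under connect sum, $b(L_1)+b(L_2)-1 = b(L)$, and $b(L)\le n$ since $\beta \in B_n$; hence $b(L_1)+b(L_2)-1 \le n$, so I may perform $n-(b(L_1)+b(L_2)-1)$ Markov stabilizations inside the right-hand block (each appending a $\sigma^{\pm 1}$ on a new rightmost strand) to reach $\beta' \in B_n$ that still lies in $B_{k,k+1}$ and has $\widehat{\beta'} = \widehat{\beta}$. Alternatively one may quote Birman--Menasco's structural result directly: a composite closed $n$-braid is carried to a ``composite braid'' of exactly the shape $\beta_1\beta_2$ above by conjugations and exchange moves (which preserve strand number and closure) together with destabilizations, after which one stabilizes back up within a single block to land in $B_n$.

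The main obstacle is the ($\Rightarrow$) direction: there is no purely elementary braid-word manipulation that produces the split form, since an arbitrary composite closed braid need not even be conjugate to one in $B_{k,k+1}$ (exchange moves are genuinely needed), so the argument must invoke the Birman--Menasco machinery. Within that, the delicate bookkeeping point is the strand count — ensuring the split representative has \emph{exactly} $n$ strands — which is why one needs both the additivity of braid index under connect sum (to guarantee $n$ is at least the width $b(L_1)+b(L_2)-1$ of the minimal split representative) and the observation that stabilizing inside a single block preserves membership in $B_{k,k+1}$ (to pad the width up to $n$). I would also re-examine the degenerate cases ($k\in\{1,n\}$, a summand equal to the unknot, split versus genuine connect sum) to confirm the equivalence is literally correct.
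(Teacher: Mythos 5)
Your proposal is correct and follows essentially the same route as the paper, whose entire proof reads ``$\impliedby$ is clear while $\implies$ is a consequence of the composite braid theorem \cite{BM90}'': you simply make the ``clear'' picture explicit for $\impliedby$ and, for $\implies$, invoke the same Birman--Menasco machinery (whether via the composite braid theorem directly or via their additivity of braid index under connect sum plus stabilization within one block, both from the same source). The extra bookkeeping you supply --- padding the strand count up to exactly $n$ and checking the degenerate cases $k\in\{1,n\}$ --- is a genuine (and welcome) elaboration of what the paper leaves implicit, not a different argument.
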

\begin{proof}
    $\impliedby$ is clear while $\implies$ is a consequence of the composite braid theorem \cite{BM90}.
\end{proof}

\begin{lemma}
\label{1lowerlem}
Suppose $\beta$ is a braid on $n$ strands with only one $\sigma_i/\sigma_i^{-1}$. If $i=1, n-1$, then $\widehat{\beta}$ is the closure of a braid on $n-1$ strands. If $\ 2\le i \le n-2$, then $\widehat{\beta}$ is the connect sum of two braids on a smaller number of strands.
\end{lemma}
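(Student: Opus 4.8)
The statement is purely topological/combinatorial about braid closures, so the plan is to manipulate the braid word using Markov moves and standard facts about braid closures, separating the two cases according to the position $i$ of the unique crossing.

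First consider the case $i=1$ or $i=n-1$; by the symmetry of the braid group (flipping strands $j\leftrightarrow n-j$) it suffices to treat $i=n-1$. Since $\beta$ contains only a single letter $\sigma_{n-1}^{\pm 1}$, write $\beta=\gamma_1\,\sigma_{n-1}^{\pm 1}\,\gamma_2$ where $\gamma_1,\gamma_2\in\langle \sigma_1^{\pm 1},\ldots,\sigma_{n-2}^{\pm 1}\rangle$. By Markov move I (cyclic permutation, which is allowed since we only care about the closure) we may bring the lone $\sigma_{n-1}^{\pm 1}$ to the end, so $\widehat{\beta}=\widehat{\gamma\,\sigma_{n-1}^{\pm 1}}$ with $\gamma=\gamma_2\gamma_1\in\langle\sigma_1^{\pm1},\ldots,\sigma_{n-2}^{\pm1}\rangle\subseteq B_{n-1}\hookrightarrow B_n$ (i.e. $\gamma$ does not involve the last strand). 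Then Markov move II (de-stabilization) removes the final $\sigma_{n-1}^{\pm 1}$, giving $\widehat{\beta}=\widehat{\gamma}$ where $\gamma$ is genuinely a braid on $n-1$ strands. This proves the first assertion.

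Now consider $2\le i\le n-2$. Again move the unique $\sigma_i^{\pm1}$ to, say, the end of the word by Markov I, writing $\widehat{\beta}=\widehat{\gamma\,\sigma_i^{\pm 1}}$ where $\gamma\in\langle\sigma_1^{\pm1},\ldots,\sigma_{i-1}^{\pm1},\sigma_{i+1}^{\pm1},\ldots,\sigma_{n-1}^{\pm1}\rangle$. Since the generators $\{\sigma_1,\ldots,\sigma_{i-1}\}$ and $\{\sigma_{i+1},\ldots,\sigma_{n-1}\}$ lie in commuting subgroups of $B_n$ (they involve disjoint sets of strands, with strand $i{+}1$ shared only as a fixed endpoint), we may sort $\gamma=\gamma_L\gamma_R$ with $\gamma_L\in\langle\sigma_1^{\pm1},\ldots,\sigma_{i-1}^{\pm1}\rangle$ and $\gamma_R\in\langle\sigma_{i+1}^{\pm1},\ldots,\sigma_{n-1}^{\pm1}\rangle$. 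Thus $\widehat\beta=\widehat{\gamma_L\gamma_R\sigma_i^{\pm1}}$, and now one recognizes this as a connect sum: the single $\sigma_i^{\pm1}$ merges the closure of $\gamma_L$ (a braid on strands $1,\ldots,i$, so on $i\le n-1$ strands) with the closure of $\gamma_R$ (a braid on strands $i{+}1,\ldots,n$, so on $n-i\le n-1$ strands), connecting them by a single band — the picture being a single clasp-free joining arc, which is exactly a connected sum. Concretely, $\beta'=\gamma_L\gamma_R\sigma_i^{\pm1}\in B_{i,i+1}$ in the notation of \cref{lem:csum}, so by that lemma $\widehat{\beta}$ is a connect sum; and each summand is a closure of a braid on $\le n-1$ strands as just noted.

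The main obstacle I anticipate is the last step: rigorously justifying that the single $\sigma_i^{\pm1}$ joining $\widehat{\gamma_L}$ and $\widehat{\gamma_R}$ produces a \emph{connected sum} rather than some more complicated satellite or splice. This is where \cref{lem:csum} does the real work — once $\beta'$ is exhibited in the form $\beta_1\beta_2$ with $\beta_1\in\langle\sigma_1^{\pm1},\ldots,\sigma_{i-1}^{\pm1}\rangle$, $\beta_2\in\langle\sigma_i^{\pm1},\ldots,\sigma_{n-1}^{\pm1}\rangle$ (absorbing the lone $\sigma_i^{\pm 1}$ into $\beta_2$), the connect-sum conclusion is immediate from \cref{lem:csum}, and the bound on the number of strands of each factor follows by inspecting which generators actually appear. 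One should also double-check the degenerate sub-cases where $\gamma_L$ or $\gamma_R$ is trivial (then one factor is an unknot and the ``connect sum'' and ``closure on fewer strands'' statements coincide), and note that the choice to push $\sigma_i^{\pm 1}$ to the right rather than the left is immaterial by Markov I.
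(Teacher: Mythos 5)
Your proof is correct and follows essentially the same route as the paper: Markov moves I and II dispose of the case $i\in\{1,n-1\}$, and for $2\le i\le n-2$ the lone $\sigma_i^{\pm 1}$ is recognized as the bridge joining the closures of the two commuting halves of the word, which is exactly the paper's ``visual'' argument made explicit via the $\impliedby$ direction of \cref{lem:csum}. Your version is more detailed (sorting $\gamma=\gamma_L\gamma_R$ using commutativity of distant generators, and tracking the strand counts $i$ and $n-i$ of the two summands), but no new idea is involved.
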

\begin{proof}
The first assertion is just Markov move II. For the second, can be seen visually as the single $\sigma_i$ provides the bridge that makes the connect sum.
\end{proof}

Thus for positive $\widehat{\beta}$ to not be the closure/connect sum of a braids(s) on lower strands, we must have that $\beta$ contains one of $sstt, stst, stts, ttss, tsts, tsst$ for $s\neq t\in \set{\sigma_1, \ldots, \sigma_{n-1}}$ as subexpressions. 

\begin{lemma}
\label{2lowerlem}
    $\beta\in B_n^+$ must contain $\sigma_i \sigma_{i+1} \sigma_i \sigma_{i+1}$ or $\sigma_{i+1} \sigma_i \sigma_{i+1} \sigma_i $ as a subexpression $\forall 1\le i\le n-2$ for $\widehat{\beta}$ to not be the closure/connect sum of a braids(s) on lower strands.
\end{lemma}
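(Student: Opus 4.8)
The plan is to prove the contrapositive: if $\beta\in B_n^+$ contains, for some fixed $1\le i\le n-2$, neither $\sigma_i\sigma_{i+1}\sigma_i\sigma_{i+1}$ nor $\sigma_{i+1}\sigma_i\sigma_{i+1}\sigma_i$ as a subexpression, then $\widehat{\beta}$ is a closure or connect sum of braid(s) on fewer than $n$ strands. Let $w$ be the subword of $\beta$ obtained by erasing every letter other than $\sigma_i,\sigma_{i+1}$, and let $\bar w$ be the alternating word obtained from $w$ by collapsing each maximal run of one letter to a single letter. A short combinatorial check (consecutive equal letters in $w$ can be merged, and an alternating word contains a length-$4$ alternating subword iff it has length $\ge 4$) shows that $\beta$, equivalently $w$, contains one of the two alternating length-$4$ words above as a subexpression precisely when $|\bar w|\ge 4$; so the hypothesis is exactly $|\bar w|\le 3$, i.e. $\bar w\in\{\emptyset,\ \sigma_i,\ \sigma_{i+1},\ \sigma_i\sigma_{i+1},\ \sigma_{i+1}\sigma_i,\ \sigma_i\sigma_{i+1}\sigma_i,\ \sigma_{i+1}\sigma_i\sigma_{i+1}\}$. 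This refines the remark preceding the statement, which only rules out the two cases $|\bar w|\le 1$.

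First I would normalize, using that reversing a braid word and cyclically permuting it (Markov move I) both preserve $\widehat{\beta}$ and positivity. If $\bar w=\sigma_i\sigma_{i+1}\sigma_i$, I would cyclically rotate $\beta$ so that it begins just after its last $\sigma_{i+1}$; in the new word every $\sigma_i$ precedes every $\sigma_{i+1}$, so $\bar w$ shrinks to $\sigma_i\sigma_{i+1}$. Symmetrically $\sigma_{i+1}\sigma_i\sigma_{i+1}$ rotates to $\sigma_{i+1}\sigma_i$, and one reversal turns $\sigma_{i+1}\sigma_i$ into $\sigma_i\sigma_{i+1}$. After these moves I may assume that every occurrence of $\sigma_i$ in $\beta$ precedes every occurrence of $\sigma_{i+1}$, this being vacuous when one of them is absent.

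Then I would split: write $\beta=P\cdot Q$ with $P$ the prefix of $\beta$ up through the last $\sigma_i$ (empty if $\sigma_i$ is absent), so $P$ has no $\sigma_{i+1}$ and $Q$ has no $\sigma_i$. Far commutativity ($\sigma_j\sigma_k=\sigma_k\sigma_j$ for $|j-k|\ge 2$) lets me reorder $P=P_1P_2$ with $P_1\in\abrac{\sigma_1,\ldots,\sigma_i}$, $P_2\in\abrac{\sigma_{i+2},\ldots,\sigma_{n-1}}$ and $Q=Q_1Q_2$ with $Q_1\in\abrac{\sigma_1,\ldots,\sigma_{i-1}}$, $Q_2\in\abrac{\sigma_{i+1},\ldots,\sigma_{n-1}}$. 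Since $P_2$ and $Q_1$ involve only generators whose indices differ by at least $3$, they commute, so $\beta=P_1Q_1P_2Q_2=(P_1Q_1)(P_2Q_2)$ with $P_1Q_1\in\abrac{\sigma_1,\ldots,\sigma_i}$ and $P_2Q_2\in\abrac{\sigma_{i+1},\ldots,\sigma_{n-1}}$, i.e. $\beta\in B_{i+1,i+2}$. The easy $(\Longleftarrow)$ direction of \cref{lem:csum} then presents $\widehat{\beta}$ as $\widehat{P_1Q_1}\#\widehat{P_2Q_2}$, or their disjoint union when the two pieces share no strand, where $P_1Q_1$ is a braid on at most $i+1\le n-1$ strands and $P_2Q_2$ a braid on at most $n-i\le n-1$ strands — exactly the assertion (and it behaves well at the ends $i=1$, $i=n-2$, where one of the two factors lives in a rank-one parabolic).

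I expect the main obstacle to be the normalization step: one must check carefully that a single cyclic rotation collapses the $\{\sigma_i,\sigma_{i+1}\}$-pattern from $\sigma_i\sigma_{i+1}\sigma_i$ (or $\sigma_{i+1}\sigma_i\sigma_{i+1}$) down to length $\le 2$, and that reversal interchanges $\sigma_i\sigma_{i+1}$ with $\sigma_{i+1}\sigma_i$ while keeping the braid positive and the closure unchanged. Once $\beta$ is in the normalized form, the far-commutativity rearrangement and the appeal to \cref{lem:csum} are routine.
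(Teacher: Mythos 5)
Your proof is correct and follows essentially the same route as the paper's: both arguments use Markov move I (cyclic rotation) to reduce to the case where every $\sigma_i$ precedes every $\sigma_{i+1}$, then use far commutativity to sort the word into $B_{k,k+1}$ form and invoke \cref{lem:csum}. Your block-collapsing reformulation ($|\bar w|\le 3$) and the contrapositive framing are a cleaner packaging of the paper's six-pattern case analysis, and the extra word-reversal move is harmless since it preserves the closure up to isotopy (though, as the paper's symmetric treatment of $sstt$/$ttss$ shows, it is not actually needed).
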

\begin{proof}
    First assume that $\beta$ only contains the subexpression $\sigma_i \sigma_i \sigma_{i+1} \sigma_{i+1}$ out of the 6 possibilities above. $\beta$ is then of the form
    \[\underbrace{\ldots}_{A}  \sigma_i\underbrace{\ldots}_{B}  \sigma_i\underbrace{\ldots}_{C} \sigma_{i+1} \underbrace{\ldots}_{D} \sigma_{i+1}\underbrace{\ldots}_{E}   \]
   where regions $A,B$ have no $\sigma_{i+1}$'s, regions $D,E$ have no $\sigma_{i}$'s by the assumption. As a result, any $\sigma_k$ for $k\in \set{1, \ldots, i-1}$ in regions $D,E$ can move to region $C$ starting from the left and similarly any $\sigma_\ell $ for $\ell\in \set{i+1, \ldots, n}$ in regions $A,B$ can move to region $C$ starting from the right. Now, the second $\sigma_i$ shown above can be assumed to be the last $\sigma_i$ while the first $\sigma_{i+1}$ can be assumed to be the first $\sigma_{i+1}$ so that region $C$ contains neither $\sigma_i$ or $\sigma_{i+1}$. As a result, we are free to move all the $\sigma_k$'s to the left of the $\sigma_\ell$'s. The resulting expression is in $B_{i, i+1}$ so by \cref{lem:csum} $\beta$ is a connect sum.\\
   
    Let $s=\sigma_i, t=\sigma_{i+1}$. The same applies for $sstt$. Using Markov move I, one can turn $\beta$ with subexpression $stts$ or $tsst$ to one with only $sstt$ and thus only $stst$ or $tsts$ remain to not be a connect sum of braids on a smaller number of strands. 
\end{proof} 

\begin{prop}
\label{primeiff}
     Let $\beta\in B_n^+$ where $n$ is the positive braid index for $\widehat{\beta}$. Then $\widehat{\beta}$ is not the connect sum of braids(s) on lower strands $\iff$ $\beta$ contains $\sigma_i \sigma_{i+1} \sigma_i \sigma_{i+1}$ or $\sigma_{i+1} \sigma_i \sigma_{i+1} \sigma_i $ as a subexpression $\forall 1\le i\le n-2$.
\end{prop}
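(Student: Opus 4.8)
The plan is to prove the two implications separately: necessity is essentially \cref{2lowerlem} upgraded by minimality of $n$, while sufficiency requires \cref{lem:csum} together with a rigidity input for positive braids.

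$(\Longrightarrow)$ Suppose $\widehat\beta$ is not the connect sum of braid(s) on lower strands. Since $n$ is the positive braid index of $\widehat\beta$, the link $\widehat\beta$ is not the closure of a positive braid on fewer than $n$ strands, and since a Markov II destabilization of a positive braid stays positive, this is the same as saying $\widehat\beta$ is not the closure of a braid on lower strands in the sense of \cref{1lowerlem}. Thus $\widehat\beta$ is neither a closure nor a connect sum of braid(s) on lower strands, so \cref{2lowerlem} applies to $\beta$ directly and gives that $\beta$ contains $\sigma_i\sigma_{i+1}\sigma_i\sigma_{i+1}$ or $\sigma_{i+1}\sigma_i\sigma_{i+1}\sigma_i$ as a subexpression for every $1\le i\le n-2$.

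$(\Longleftarrow)$ I would argue the contrapositive: if $\widehat\beta=L_1\#L_2$ is a nontrivial connect sum, exhibit a color $i$ for which $\beta$ contains neither word. Since either subexpression requires two copies each of $\sigma_i$ and of $\sigma_{i+1}$, we may assume $\beta$ has at least two copies of every generator, as otherwise $\beta$ visibly fails the condition at an adjacent color. Stacking minimal positive braid representatives of $L_1$ and $L_2$ along their shared strand produces a positive braid for $\widehat\beta$ on $b_p(L_1)+b_p(L_2)-1$ strands, so minimality of $n$ forces $n=b_p(L_1)+b_p(L_2)-1$ and yields $\widehat\beta=\widehat{\beta'}$ for some $\beta'\in B_n^+\cap B_{k,k+1}$ with $k=b_p(L_1)\in\{2,\dots,n-1\}$; this $\beta'$ has all its $\sigma_{k-1}$'s before all its $\sigma_k$'s, hence contains neither forbidden word for $i=k-1$, matching \cref{lem:csum}.

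The main obstacle is then transferring this failure from the split representative $\beta'$ back to the given word $\beta$, since a priori these are distinct positive braid words on the minimal strand number. I expect the right approach is to sharpen \cref{2lowerlem}: its proof already shows that non-interleaving of two adjacent colors in a Markov I--normalized positive braid word makes the closure split off a connected summand, and for the converse I would invoke the composite braid theorem \cite{BM90} underlying \cref{lem:csum} to show that when $\widehat\beta$ is composite the given $\beta$ itself --- not merely some conjugate of it --- must fail to interleave some adjacent pair of colors, using positivity of $\beta$ and minimality of $n$. Establishing this rigidity of minimal positive braid representatives is where I anticipate the real difficulty; granting it, the combinatorial region-pushing of \cref{2lowerlem} can be quoted verbatim to finish.
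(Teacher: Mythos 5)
Your forward implication is correct and is essentially the paper's: minimality of the positive braid index rules out the ``closure of a braid on lower strands'' alternative in \cref{2lowerlem}, so that lemma delivers the required subexpressions for every $1\le i\le n-2$.

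The converse, however, has a genuine gap, and you have located it yourself: the ``rigidity'' you ask to be granted (``granting it, \dots can be quoted verbatim to finish'') is the entire content of the paper's proof. Exhibiting \emph{one} split positive representative $\beta'\in B_{k,k+1}$ of a composite $\widehat\beta$ is the easy half of \cref{lem:csum}; the proposition is about the \emph{given} word $\beta$, and nothing in your write-up bridges the two. The paper does not prove a structural rigidity theorem for minimal positive representatives; it runs the argument in the non-contrapositive direction. Assuming $\beta$ satisfies the interleaving condition $(k)$ (existence of $\sigma_{k-1}\sigma_k\sigma_{k-1}\sigma_k$ or its reverse as a subexpression) for all $k$, it shows that \emph{no} word $\beta'$ with $\widehat{\beta'}=\widehat\beta$ lies in any $B_{k,k+1}$, which by \cref{lem:csum} forbids a connect sum. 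The mechanism: condition $(k)$ is invariant under cyclic permutation (Markov I), so only braid relations need tracking; the last relation $\sigma_{k_0-1}\sigma_{k_0}\sigma_{k_0-1}\to\sigma_{k_0}\sigma_{k_0-1}\sigma_{k_0}$ (or its mirror) that could destroy condition $(k_0)$ forces the word to contain exactly two letters $\sigma_{k_0-1}$ separated by a single $\sigma_{k_0}$, which makes condition $(k_0-1)$ fail \emph{before} the move --- contradicting the inductive hypothesis --- except when $k_0=2$, where instead a Markov II destabilization becomes available and contradicts minimality of $n$ (and symmetrically for $k_0=n-1$). If you insist on the contrapositive framing, you must prove an equivalent statement, and the composite braid theorem alone will not do it: through \cref{lem:csum} it only tells you that \emph{some} representative splits, which is where you started. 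A secondary, smaller issue: your stacking construction only gives $n\le b_p(L_1)+b_p(L_2)-1$; the equality you assert is an additivity statement for the positive braid index that would itself require justification, though it becomes unnecessary once the argument is run in the paper's direction.
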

\begin{proof}
    The lemma above proves the $\implies$ direction. For the $\impliedby$ direction by \cref{lem:csum} we need to show for all $\beta^\prime$ s.t. $\widehat{\beta^\prime}=\widehat{\beta}$, $\beta^\prime\not\in B_{k,k+1}$ for any $k\ge 2$. let $s=\sigma_{k-1}, t=\sigma_k$. It is clear that as long as condition
    $$ (k) \ \beta^\prime \textnormal{ has }stst \textnormal{ or }tsts \textnormal{ as a subexpression}$$
    holds, then $\beta^\prime\not\in B_{k,k+1}$. Markov move I preserves this property and so we only have to consider $\beta^\prime$ obtained from $\beta$ by braid relations. $\beta$ starts out satisfying condition $(k)$ for all $k$ and thus is never in $B_{k, k+1}$ for any $k$. If there is a sequence of braid moves that transforms $\beta$ into a word that doesn't satisfy $(k_0)$ for some $k_0$, the final move must be either move $(A)$ or $(B)$ below.
 \[  (A) \ \ldots sts\ldots\to \ldots tst\ldots \qquad  (B) \ \ldots tst \ldots \to \ldots sts \ldots  \]
where now $s=\sigma_{k_0-1}, t=\sigma_{k_0}$, as this is the only way to change the relative position of $s$ and $t$. WLOG, we can assume that before applying $(A)$ or $(B)$ the word satisfies $(k)$ for all $(k)$. If $(A)$ was the final move, then the $\ldots$ on either side cannot contain any $s$. But $(A)$ doesn't change the relative position of $s$ and $\sigma_{k_0-2}$ so that means before applying $(A)$ condition $(k_0-1)$ was not satisfied, a contradiction. The exception is $k_0=2$, but now we can remove the only $\sigma_1$ using Markov move II after move $(A)$ contradicting minimality of $\beta$. Similarly, if $(B)$ was the final move, then the $\ldots$ on either side cannot contain any $t$ and a similar argument holds with $t$ and $\sigma_{k_0+1}$ where the exception is $k_0=n-1$ but in which case we can remove the only $\sigma_{n-1}$ using Markov move II after move $(B)$, again contradicting minimality of $\beta$.

\end{proof}

\begin{remark}
 The analogous results clearly also holds for negative links.  
\end{remark}

For connect sums, their $\overline{\hhh}$ can be computed using the following 

\begin{prop}[{\cite[Lemma 7.8]{Ras15}}]
\label{raslemma}
Given two braids $\beta_1$ and $\beta_2$, as graded vector spaces, we have that 
\[ \overline{\hhh}(\beta_1\#\beta_2)= \overline{\hhh}(\beta_1)\otimes_{\kb}  \overline{\hhh}(\beta_2)\]
\end{prop}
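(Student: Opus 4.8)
The plan is a Künneth-type argument, the essential point being that for the \emph{reduced} theory the connect sum corresponds to a braid whose defining polynomial ring splits as a tensor product over $\kb$. Concretely, realize $\widehat{\beta_1}\#\widehat{\beta_2}$ as the closure of the braid $\beta\in B_{n+m-1}$ obtained by placing $\beta_1\in B_n$ on strands $1,\dots,n$ and a shifted copy $\beta_2'$ of $\beta_2\in B_m$ on strands $n,\dots,n+m-1$ (cf. \cref{lem:csum}); then $\beta_1$ involves only the colors $\sigma_1,\dots,\sigma_{n-1}$ and $\beta_2'$ only $\sigma_n,\dots,\sigma_{n+m-2}$, and these sets are disjoint. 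Since $\overline{\hhh}$ is a link invariant it suffices to compute $\overline{\hhh}(\beta)$. Disjointness of the colors means $R=\kb[\alpha_1,\dots,\alpha_{n+m-2}]\cong R_1\otimes_\kb R_2$ with $R_1=\kb[\alpha_1,\dots,\alpha_{n-1}]$ and $R_2=\kb[\alpha_n,\dots,\alpha_{n+m-2}]$ --- note $(n-1)+(m-1)=(n+m-1)-1$, so this is exactly the product of the two \emph{reduced} rings, which is why no correction appears. Each Bott--Samelson bimodule $B_{\sigma_i}$ with $i<n$ equals $(R_1\text{-Bott--Samelson})\otimes_\kb R_2$ as an $R$-bimodule, and symmetrically for $i\ge n$; since $\otimes_R$ of such external products is again an external product, the Rouquier complex factors as $F^\bullet_\beta\cong F^\bullet_{\beta_1}\boxtimes F^\bullet_{\beta_2}$, the external tensor product of the Rouquier complexes over $R_1$ and $R_2$.

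Next, compute Hochschild cohomology termwise. Here $R^e=R\otimes_\kb R\cong R_1^e\otimes_\kb R_2^e$ and $R=R_1\boxtimes R_2$ as an $R^e$-module. Each $R_i$ has a finite Koszul resolution $P^{(i)}_\bullet$ by finitely generated free $R_i^e$-modules, and since all rings are flat over the field $\kb$, $P^{(1)}_\bullet\boxtimes P^{(2)}_\bullet$ is a free resolution of $R$ over $R^e$. Using that the $P^{(i)}_j$ are finitely generated, one has $\hom_{R^e}(P^{(1)}_\bullet\boxtimes P^{(2)}_\bullet,\,M_1\boxtimes M_2)\cong\hom_{R_1^e}(P^{(1)}_\bullet,M_1)\otimes_\kb\hom_{R_2^e}(P^{(2)}_\bullet,M_2)$ for $R_i$-bimodules $M_i$, and the Künneth theorem over the field $\kb$ (no $\mathrm{Tor}$ correction) gives, naturally in $M_1$ and $M_2$,
\[ \hh^k(M_1\boxtimes M_2)\;\cong\;\bigoplus_{p+q=k}\hh^p(M_1)\otimes_\kb\hh^q(M_2). \]
Applying this to each term $F^i_\beta=\bigoplus_{i_1+i_2=i}F^{i_1}_{\beta_1}\boxtimes F^{i_2}_{\beta_2}$ and invoking naturality to identify the differentials, the complex $\hh^k(F^\bullet_\beta)$ is isomorphic to $\bigoplus_{p+q=k}$ of the total complex of the bicomplex $\hh^p(F^\bullet_{\beta_1})\otimes_\kb\hh^q(F^\bullet_{\beta_2})$.

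Finally, take cohomology in the homological ($T$) direction. The Rouquier complexes are bounded and each $\hh^\bullet$ of a Soergel bimodule is a finitely generated graded $R$-module, hence finite-dimensional in each $Q$-degree, so the Künneth theorem over $\kb$ applies once more to the total complex and yields
\[ H^i\!\big(\hh^k(F^\bullet_\beta)\big)\;\cong\;\bigoplus_{p+q=k}\ \bigoplus_{i_1+i_2=i} H^{i_1}\!\big(\hh^p(F^\bullet_{\beta_1})\big)\otimes_\kb H^{i_2}\!\big(\hh^q(F^\bullet_{\beta_2})\big). \]
All maps above are homogeneous for the internal $Q$-grading, so $Q$-degrees add under $\otimes_\kb$; assembling over $k$ and $i$ and recording all three gradings identifies the right-hand side with the tri-graded vector space $\overline{\hhh}(\beta_1)\otimes_\kb\overline{\hhh}(\beta_2)$, proving the claim.

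I expect the difficulties to be organizational rather than conceptual. The two genuine points to verify are: (i) that $F^\bullet_\beta$ really is the external tensor product --- once the braid representative above is fixed and link-invariance of $\overline{\hhh}$ is used, this is just the color-disjointness observation together with the behaviour of $\otimes_R$ on external products; and (ii) the finiteness hypotheses behind the two Künneth formulas and the identity $\hom(P^{(1)}\boxtimes P^{(2)},-)\cong\hom(P^{(1)},-)\otimes\hom(P^{(2)},-)$, which hold because the $R_i$ are polynomial rings admitting finite Koszul resolutions over $R_i^e$ and the bimodules in sight are finitely generated over Noetherian rings. It is worth noting that the analogous statement for \emph{unreduced} $\hhh$ is more delicate precisely because there the shared strand forces a shared polynomial generator and the relevant tensor product is over $\kb[x]$ rather than $\kb$; the reduced convention is exactly what makes the clean splitting above possible.
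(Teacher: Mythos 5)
The paper gives no argument for this proposition; it is quoted verbatim from \cite[Lemma 7.8]{Ras15}, so your proposal has to stand on its own, and it has a genuine gap at its first substantive step: the claim that $F^\bullet_\beta\cong F^\bullet_{\beta_1}\boxtimes F^\bullet_{\beta_2}$ over $R\cong R_1\otimes_\kb R_2$. Disjointness of the generating sets $\{\alpha_1,\dots,\alpha_{n-1}\}$ and $\{\alpha_n,\dots,\alpha_{n+m-2}\}$ does not make the two halves of the braid act on disjoint parts of the realization, because the adjacent reflections still couple them: $s_{n-1}(\alpha_n)=\alpha_{n-1}+\alpha_n$. Consequently $R^{s_{n-1}}\neq R_1^{s_{n-1}}\otimes_\kb R_2$, and in $B_{\sigma_{n-1}}=R\otimes_{R^{s_{n-1}}}R(1)$ the left and right actions of $\alpha_n$ differ: writing $\alpha_n=(\alpha_n+\tfrac12\alpha_{n-1})-\tfrac12\alpha_{n-1}$ with the first summand $s_{n-1}$-invariant, one finds $\alpha_n\otimes 1-1\otimes\alpha_n=-\tfrac12(\alpha_{n-1}\otimes 1-1\otimes\alpha_{n-1})\neq 0$. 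In any external product $M_1\boxtimes R_2$ the two actions of $\alpha_n$ coincide, so $B_{\sigma_{n-1}}$ (and symmetrically $B_{\sigma_n}$) is not an external product, and the Rouquier complex does not factor termwise as you claim. The reduced convention removes one polynomial generator but does not decouple the $S_{n+m-1}$-action; your closing remark conflates these two things.

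The statement you want at the level of Hochschild cohomology is nonetheless close to true: \cite[Lemma 3.1]{Li22}, quoted in \cref{basisappend}, shows that $\hh^\bullet(B_i)$ over $R_n$ factors as the two-strand answer tensored with an exterior algebra on the remaining coroots even though $B_i$ itself does not factor --- the correction term in the exterior polynomial forcing relation is exactly the trace of $s_{n-1}(\alpha_n^\vee)\neq\alpha_n^\vee$. But upgrading this termwise fact to an isomorphism of complexes $\hh^k(F^\bullet_\beta)\cong\bigoplus_{p+q=k}\mathrm{Tot}\bigl(\hh^p(F^\bullet_{\beta_1})\otimes_\kb\hh^q(F^\bullet_{\beta_2})\bigr)$ compatible with the Rouquier differentials is precisely the content that needs proof, and your K\"unneth formalism does not supply it once the external-product input is withdrawn. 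Rasmussen's actual argument goes through the route you dismiss as more delicate: the complex of the composite presentation is a tensor product over the polynomial ring $\kb[x]$ of the shared strand, and one uses freeness over $\kb[x]$ together with the definition of the reduced complex to collapse $\otimes_{\kb[x]}$ to $\otimes_\kb$. Some version of that one-shared-variable bookkeeping is unavoidable.
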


\subsection{$\overline{\hhh}$ for $T(2,\pm k)$}
\label{t2ksect}
The reduction results in the previous subsection need to start with a base case, namely $n=2$. There are only two possible families of links that appear as the closure of $\beta\in B_2$, namely $T(2,m)=\widehat{\sigma_1^m}$ and $T(2,-m)=\widehat{\sigma_1^{-m}}$ where $m\in \mathbb{Z}^+$. We record the $R=\kb[\alpha_s] $ module structure on their $\overline{\hhh}$ below. 

\begin{center}
    $\paren{\overline{\hhh}(\sigma_1^{m})^{A}}^{T=m-i}=$\begin{tabular}{c|c}
  $A=0$  &  $A=1$  \\\hline
$\begin{cases}
 \kb(m) & \textnormal{ if } i=0 \\
 \kb(m-2i)  &   \textnormal{ if }2\le i \le m, i \textnormal{ even} \\
\kb[\alpha_s](-m)& \textnormal{ if }i=m , m \textnormal{ even} \\
 0  & \textnormal{ otherwise}
\end{cases}$ & $\begin{cases}
 0& \textnormal{ if } i=0 \\
 \kb(m-2i+4)  &   \textnormal{ if }2\le i \le m, i \textnormal{ even} \\
\kb[\alpha_s](-m+4)& \textnormal{ if }i=m , m \textnormal{ even} \\
 0  & \textnormal{ otherwise}
\end{cases}$
\end{tabular}
\end{center}

\begin{center}
    $\paren{\overline{\hhh}(\sigma_1^{-m})^{A}}^{T=-m+i}=$\begin{tabular}{c|c}
  $A=0$  &  $A=1$  \\\hline
$\begin{cases}
 0 & \textnormal{ if } 0\le i <3 \\
 \kb (-m+2i-2) &   \textnormal{ if }3\le i \le m, i \textnormal{ odd} \\
  \kb[\alpha_s](m-2) & \textnormal{ if }i=m , m \textnormal{ even} \\
  0 & \textnormal{ otherwise}
\end{cases}$ & $ \begin{cases}
 0 & \textnormal{ if } i=0 \\
 \kb (-m+2i+2) &   \textnormal{ if }1\le i \le m \textnormal{ and } i \textnormal{ odd} \\
  \kb[\alpha_s](m+2) & \textnormal{ if }i=m \textnormal{ and } m \textnormal{ even} \\
  0 & \textnormal{ otherwise}
\end{cases}$
\end{tabular}
\end{center}

Note that $\overline{\hhh}(\sigma_1^{m})$ is supported entirely in even or odd $T-$degrees while this is only true when $m$ is odd for $\overline{\hhh}(\sigma_1^{-m})$. Also note that $\overline{\hhh}(\sigma_1^{\pm m})$ is f.d. $\iff m$ is odd. 

\subsection{Koszul Complexes}

Let $R$ be a commutative ring and $E=R^r$ , and let $\phi: E\to R$ be a $R-$module homomorphism. In particular, if $\set{e_i}_{i=1}^r$ is a $R-$basis for $E$ then $\phi$ is the same as a tuple $(s_1,\ldots, s_r)$ of elements in $R$ where $\phi(e_i)=s_i$. The Koszul complex associated to $\phi$ will be the following complex of $R-$modules.
\begin{equation}
\label{koszuleq}
K_\bullet(\phi)=K_\bullet(s_1,\ldots, s_r): 0 \to \wedge^rE\to \ldots \wedge^{2} E \xrightarrow{d_2} \wedge^1 E \xrightarrow{d_1} \un{R} \to 0      
\end{equation}

where $R$ is in \un{homological} degree 0 and the $R-$linear differential is given by "contraction"
\[ d_k(e_1\wedge \cdots \wedge e_k)=\sum_{i=1}^k (-1)^{i+1}\phi (e_i) e_1 \wedge \cdots \wedge \widehat{e_i} \wedge \cdots \wedge e_k \]

\begin{lemma}
    $K_\bullet(s_1,\ldots, s_r)=$cone of multiplication by $s_r$ map between the complexes: $K_\bullet(s_1,\ldots, s_{r-1})\to K_\bullet(s_1,\ldots, s_{r-1})$.
\end{lemma}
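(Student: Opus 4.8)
The plan is to unwind both sides of the claimed equality explicitly and check they agree as complexes of $R$-modules, differential included. First I would recall the mapping cone convention: given a chain map $f_\bullet\colon C_\bullet\to D_\bullet$, the cone $\mathrm{Cone}(f)_\bullet$ has $\mathrm{Cone}(f)_k=C_{k-1}\oplus D_k$ with differential $\begin{pmatrix} -d^C & 0 \\ f & d^D\end{pmatrix}$ (signs depending on convention; I would fix one and stick with it). Applying this with $C_\bullet=D_\bullet=K_\bullet(s_1,\dots,s_{r-1})$ and $f=\cdot s_r$ (which is a chain map since multiplication by a ring element commutes with the $R$-linear Koszul differential), I get a complex whose degree-$k$ term is $\wedge^{k-1}E'\oplus\wedge^k E'$, where $E'=R^{r-1}$ with basis $e_1,\dots,e_{r-1}$.

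Next I would exhibit the obvious isomorphism $\wedge^k E\cong \wedge^{k-1}E'\oplus\wedge^k E'$ for $E=R^r$ with basis $e_1,\dots,e_r$: every element of $\wedge^k E$ decomposes uniquely as $e_r\wedge\omega + \eta$ with $\omega\in\wedge^{k-1}E'$ and $\eta\in\wedge^k E'$. Under this identification I would compute the Koszul differential $d_k$ of $K_\bullet(s_1,\dots,s_r)$ on $e_r\wedge\omega+\eta$ using the contraction formula: the $e_r$-term contributes $\phi(e_r)=s_r$ times $\omega$ (after moving $\widehat{e_r}$ to the front, picking up a sign that I would track carefully), plus $-e_r\wedge(d'_{k-1}\omega)$ where $d'$ is the differential of $K_\bullet(s_1,\dots,s_{r-1})$, while $\eta$ simply maps via $d'_k$. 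Matching this with the cone differential $\begin{pmatrix} -d' & 0 \\ \cdot s_r & d'\end{pmatrix}$ acting on $(\omega,\eta)$ gives exactly the same formula, so the isomorphism is a chain isomorphism. I would also note the boundary cases: in homological degree $0$, $\wedge^0 E=R$ matches $0\oplus\wedge^0 E'=R$, and the top degree $r$ matches $\wedge^{r-1}E'\oplus 0$.

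The only real subtlety — and the step I would be most careful about — is sign bookkeeping: the sign incurred when pulling $e_r$ out of $e_r\wedge e_{i_1}\wedge\cdots$ versus the sign in the contraction formula $\sum(-1)^{i+1}\phi(e_i)(\cdots)$, together with the sign convention chosen for the cone. These need to be reconciled, possibly by reindexing $e_r$ as the \emph{last} basis vector (so that $e_r$ appears in position $k$ in a wedge $e_{i_1}\wedge\cdots\wedge e_{i_{k-1}}\wedge e_r$) rather than the first; with that convention the sign $(-1)^{k-1+1}=(-1)^k$ from the contraction on the $e_r$-slot cleanly produces the cone's off-diagonal map up to the chosen overall sign, and one can absorb any remaining global sign into the identification of $\wedge^{k-1}E'$ (replacing the iso by $\pm 1$ on that summand in each degree). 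I would present this as a short lemma-proof: set up the cone convention, write the splitting, compute both differentials, and observe they coincide. No homology computation or spectral sequence is needed — this is a purely formal identity of complexes, so the proof is essentially a one-paragraph verification once the conventions are pinned down.
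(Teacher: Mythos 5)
The paper states this lemma without proof, treating it as a standard fact about Koszul complexes, so there is no argument of the paper's to compare against. Your proposed verification --- splitting $\wedge^k E \cong \wedge^{k-1}E' \oplus \wedge^k E'$ via the presence or absence of $e_r$, computing the contraction differential on each summand, and reconciling the signs with the cone convention (possibly twisting the identification by $\pm 1$ on the $\wedge^{k-1}E'$ summand in each degree) --- is the standard argument and is correct.
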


\begin{corollary}
\label{koszulvanish}
If $s_r=u$ is a unit in $R$, then $K_\bullet(s_1,\ldots, u)$ is exact everywhere.
\end{corollary}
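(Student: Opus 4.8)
The plan is to deduce this immediately from the preceding Lemma together with the standard fact that the mapping cone of a chain isomorphism is contractible (in particular exact). By the Lemma, $K_\bullet(s_1,\ldots,s_{r-1},u)$ is the cone of the chain map $m_u\colon K_\bullet(s_1,\ldots,s_{r-1})\to K_\bullet(s_1,\ldots,s_{r-1})$ given by multiplication by $u$. Since $u$ is a unit in $R$ and every differential in $K_\bullet(s_1,\ldots,s_{r-1})$ is $R$-linear, $m_u$ commutes with the differentials and has inverse $m_{u^{-1}}$, so it is an isomorphism of complexes of $R$-modules.

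Next I would invoke (or recall in one line) that $\mathrm{Cone}(f)$ of an isomorphism $f\colon C_\bullet\to C_\bullet$ is null-homotopic: writing $\mathrm{Cone}(f)_k = C_{k-1}\oplus C_k$ with differential $\begin{pmatrix} -d & 0 \\ f & d\end{pmatrix}$, the map $h=\begin{pmatrix} 0 & 0 \\ f^{-1} & 0\end{pmatrix}$ is a contracting homotopy, i.e. $dh+hd=\mathrm{id}$. Here one just plugs in $f=m_u$, $f^{-1}=m_{u^{-1}}$. Hence $K_\bullet(s_1,\ldots,s_{r-1},u)$ is contractible, and in particular exact everywhere, which is the claim.

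I do not expect any real obstacle: the only thing to be slightly careful about is making sure the cone description from the Lemma has the sign/indexing convention under which the homotopy formula above is the correct one, but this is bookkeeping and can be checked directly against \eqref{koszuleq}. If one prefers to avoid citing the abstract cone fact, the alternative is to exhibit the homotopy on $K_\bullet$ intrinsically: using the basis $\{e_1,\ldots,e_r\}$ with $\phi(e_r)=u$, define $s\colon \wedge^k E\to \wedge^{k+1}E$ by $s(\omega)=u^{-1}\,e_r\wedge\omega$, and verify $d\,s + s\,d = \mathrm{id}$ by expanding the contraction differential and separating the terms involving $e_r$ from those that do not. This is a short direct computation and makes the corollary self-contained.
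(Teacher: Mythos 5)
Your argument is correct and follows the paper's own route exactly: both invoke the preceding lemma to realize $K_\bullet(s_1,\ldots,u)$ as the cone of multiplication by the unit $u$, note this map is an isomorphism (hence a quasi-isomorphism), and conclude the cone is acyclic. The explicit contracting homotopy and the alternative intrinsic homotopy $s(\omega)=u^{-1}e_r\wedge\omega$ are fine additional detail but not a different method.
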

\begin{proof}
    Because $u$ is a unit, the map $K_\bullet(s_1,\ldots, s_{r-1})\xrightarrow{u} K_\bullet(s_1,\ldots, s_{r-1})$ is an isomorphism and in particular a quasi-isomorphism and thus the cone is acylic as desired.
\end{proof}

\section{$T-$degrees $|\beta|$ and $|\beta|-1$}
\label{betabeta1sect}

\fbox{Let $\beta\in \mathrm{Br}_n$ be a positive braid with at least one $\sigma_i$ for $1\le i \le n-1$.} \cref{1lowerlem} and \cref{raslemma} implies that computing $\overline{\hhh}$ for this class of braids computes it for all positive braids. For $n$ strands, $R=\kb[\alpha_1, \ldots, \alpha_{n-1}]$. In this section we are only computing $T-$degrees $|\beta|$ and $|\beta|-1$ and so the last three terms of $R^\bullet_\beta$ can be written as

\begin{equation}
\label{rbetaeq}
    \to \overbrace{B_1B_2(|\beta|-2)\oplus\cdots \oplus B_{n-2}B_{n-1}(|\beta|-2)}^{R^{|\beta|-2}_\beta}\underset{d_{|\beta|-2}^\prime}{\xrightarrow{ 
   \lcounit{i} \lzero{j}  -\lzero{i} \lcounit{j} 
   }} \overbrace{B_1(|\beta|-1)\oplus \cdots \oplus B_{n-1}(|\beta|-1)}^{R^{|\beta|-1}_\beta} \underset{d_{|\beta|-1}}{ \xrightarrow{\begin{pmatrix}
 \lcounit{1}& \ldots &\lcounit{n} 
\end{pmatrix}}} R(|\beta|) 
\end{equation}  
where $ R(|\beta|) $ is placed in cohomological degree $|\beta|$ and $d_{|\beta|-2}^\prime$ is of the form above because we have omitted the $C_{ii}$ terms (their image under $d_{|\beta|-2}$ is 0). Our assumption on $\beta$ only guarantees one of $C_{ij}$ or $C_{ji}$ appears in $R^\bullet_{\beta}$ but since $\hh^\bullet(d_{|\beta|-2}^\prime)(C_{ij})=\hh^\bullet(d_{|\beta|-2}^\prime)(C_{ji})$ (this is clear diagrammatically, one is the negative of the other, see \cref{basisappend} for what $\hh^k(B_iB_j)$ looks like) we will only include the $C_{ij}$ terms for $i<j$ in $R_\beta^{|\beta|-2}$. In fact, one may suggestively write $R^{|\beta|-2}_\beta``="\Lambda^2(B_1\oplus \ldots\oplus B_n)$

\begin{definition}
   For $J\subseteq [n-1]$, let $\alpha_J^\vee=\prod_{j\in J} \alpha_j^\vee$ and $\widehat{\alpha_J^i}^\vee=\prod_{i\neq j \in J} \alpha_j^\vee$.
\end{definition}

\begin{lemma}
\label{gjlem}
For $J\subseteq [n-1]$ and $|J|=k$ let 
 \[ G_{J}=\bigoplus_{i_0\in J}\lhunit{i_0} \scalemath{0.85}{\dboxed{\widehat{\alpha_J^{i_0}}^\vee}} \, R\oplus\bigoplus_{j\not\in J}\lunit{j}\scalemath{0.85}{\dboxed{\alpha_{J}^\vee}} \, R\]
 Then $\displaystyle \hh^k(R^{|\beta|-1}_\beta)=\bigoplus_{|J|=k} G_J$ and $\hh^k(d_{|\beta|-1})(G_J) \subseteq \scalemath{0.85}{\dboxed{\alpha_{J}^\vee}} \, R $ $\implies \hh^k(d_{|\beta|-1})$ is a block matrix.
\end{lemma}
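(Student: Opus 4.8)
The claim is a bookkeeping statement about $\hh^k$ applied to the last two terms of the complex $R_\beta^\bullet$, and the natural approach is to compute $\hh^k(B_i)$ diagrammatically and then track which basis vectors the differential $d_{|\beta|-1}$ (which is just a tuple of endpoints $\lcounit{i}$) sends into which summand. First I would recall from \cref{basisappend} the $R$-module description of $\hh^\bullet(B_s)$: a copy of $R$ generated by the "bare" strand $\lzero{s}$ in Hochschild degree $0$, and a copy of $R$ generated by the Hochschild dot $\lhunit{s}$ in Hochschild degree $1$, so that $\hh^k(B_s)=0$ for $k\ge 2$. Since Hochschild cohomology commutes with direct sums and $R_\beta^{|\beta|-1}=\bigoplus_{i=1}^{n-1} B_i(|\beta|-1)$, we get $\hh^k(R_\beta^{|\beta|-1})=\bigoplus_{i} \hh^k(B_i)$, and the only way to produce total Hochschild degree $k$ is to pick a subset $J\subseteq[n-1]$ with $|J|=k$, use the Hochschild dot on each strand in $J$, and the bare strand on each strand not in $J$. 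Multiplying the chosen exterior generators $\alpha_j^\vee$ for $j\in J$ together gives, via the Exterior Box formalism, precisely the generators appearing in $G_J$ — modulo the subtlety that the exterior generator $\alpha_j^\vee$ "lives" next to the $j$-th strand, which is why $G_J$ is written with $\widehat{\alpha_J^{i_0}}^\vee$ on the strand $i_0$ carrying the Hochschild dot and the full $\alpha_J^\vee$ on strands outside $J$. This is exactly the content of the first assertion; I would phrase it as: expand each summand $\hh^k(B_i)$ into its $R$-basis, collect terms by which strands carry a Hochschild dot, and observe that the resulting index set is $\{J : |J|=k\}$, with the $J$-summand being $G_J$.

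\textbf{The block-matrix assertion.} For the second half, the key computation is what $\hh^k(d_{|\beta|-1})$ does to a generator of $G_J$. The differential on $B_i$ is the endpoint map $B_i \xrightarrow{\lcounit{i}} R$; applying $\hh^k$ and feeding in a basis element, the bare strand $\lzero{i}$ with decoration $\dboxed{\alpha_J^\vee}$ simply becomes the exterior box $\dboxed{\alpha_J^\vee}$ (capping off an undecorated strand with an endpoint gives $1\in R$), while the Hochschild-dotted strand $\lhunit{i_0}$ with decoration $\dboxed{\widehat{\alpha_J^{i_0}}^\vee}$ gets capped by an endpoint on top of a Hochschild startdot — and here the relevant relation is the \textbf{1-color cohomology} relation, which says $\lzero{s}\,\dboxed{\alpha_s^\vee}$ equals $\lhunit{s}\,\boxed{\alpha_s}$ read the other way, i.e. capping a Hochschild startdot strand produces the factor $\alpha_{i_0}^\vee$ back (as an element of the target $\Lambda$), turning $\widehat{\alpha_J^{i_0}}^\vee$ into $\alpha_{i_0}^\vee\cdot\widehat{\alpha_J^{i_0}}^\vee=\alpha_J^\vee$. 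Thus \emph{every} generator of $G_J$, after applying $\hh^k(d_{|\beta|-1})$, lands in the subspace $\dboxed{\alpha_J^\vee}\,R\subseteq \hh^k(R(|\beta|))$ — that is the displayed containment $\hh^k(d_{|\beta|-1})(G_J)\subseteq \dboxed{\alpha_J^\vee}\,R$. Since $\hh^k(R)=\Lambda^k$ has a basis of squarefree monomials $\alpha_J^\vee$ indexed by the same set $\{J : |J|=k\}$, and distinct $J$ give linearly independent (in fact disjoint-support) summands $\dboxed{\alpha_J^\vee}\,R$, the matrix of $\hh^k(d_{|\beta|-1})$ in the bases $\{G_J\}_J$ (source) and $\{\dboxed{\alpha_J^\vee}\,R\}_J$ (target) is block-diagonal: the $G_J$-block maps only to the $\alpha_J^\vee$-block.

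\textbf{Main obstacle.} None of this requires analyzing the differential $d_{|\beta|-2}'$ or the larger structure of $R_\beta^\bullet$; the whole statement is local to the last two terms, so the real work is purely diagrammatic relation-chasing. I expect the one genuinely delicate point to be pinning down the precise form of the $R$-basis of $\hh^\bullet(B_sB_tB_s\cdots)$ — or here just $\hh^\bullet(B_i)$ — and in particular making sure the "Exterior Box" decorations and the $\widehat{\alpha_J^{i_0}}^\vee$ versus $\alpha_J^\vee$ placement are exactly right, since a sign or an off-by-one in which strand carries which exterior generator would break the clean block structure; this is where I would lean on \cref{basisappend} (specifically \cref{hhsts}) and the 1-color cohomology relation as the load-bearing inputs. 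A secondary, purely cosmetic point is keeping track of the $Q$-grading shifts $(|\beta|-1)$ and the bidegrees $(|\xi|,-2|\xi|)$ of the exterior boxes, but these are automatic once the $R$-module generators are identified and play no role in the block-matrix conclusion.
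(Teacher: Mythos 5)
Your proposal is correct and takes essentially the same (very short) route as the paper: the decomposition $\hh^k(R_\beta^{|\beta|-1})=\bigoplus_{|J|=k}G_J$ is read off from the bases in \cref{basisappend} (each $\hh^k(B_i)$ contributes $\lhunit{i}\,\Lambda^{k-1}$-terms, regrouped by the total exterior support $J$), and the containment $\hh^k(d_{|\beta|-1})(G_J)\subseteq \scalemath{0.85}{\dboxed{\alpha_J^\vee}}\,R$ is exactly the Barbell and Hochschild Barbell relations. Two small slips in your diagrammatic bookkeeping are worth correcting even though they do not affect this lemma: the relation that caps an enddot onto a Hochschild startdot to produce $\dboxed{\alpha_{i_0}^\vee}$ is the \textbf{Hochschild Barbell} relation, not the 1-color cohomology relation; and capping the enddot onto the plain startdot $\lunit{j}$ gives the barbell $\boxed{\alpha_j}$, not $1$ --- both outputs lie in $\scalemath{0.85}{\dboxed{\alpha_J^\vee}}\,R$ so the block structure is unaffected, but the $\alpha_j$ versus $1$ distinction is precisely what makes the $J$-summand a Koszul complex $K_\bullet(\{\alpha_i\}_{i\notin J},1,\dots,1)$ in \cref{mainpostheorem}.
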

\begin{proof}
    The first follows from \cref{basisappend} while the second from (Hochschild) barbell relations.
\end{proof}

\begin{lemma}
\label{fjgjlem}
 For $J\subseteq [n-1]$ and $|J|=k$ let
\[ F_{J}^\prime=\bigoplus_{\substack{i_0,j_0\in J \\ i_0<j_0}} \lhunit{i_0} \lhunit{j_0} \scalemath{0.85}{\dboxed{\widehat{\alpha_J^{i_0, j_0}}^\vee}}R\oplus\bigoplus_{\substack{i_0\in J,j\not\in J \\ j< i_0}} \lunit{j}\lhunit{i_0} \scalemath{0.85}{\dboxed{\widehat{\alpha_J^{i_0}}^\vee}} R\oplus\bigoplus_{\substack{i_0\in J, j\not\in J \\ i_0<j }}\lhunit{i_0}  \lunit{j} \scalemath{0.85}{\dboxed{\widehat{\alpha_J^{i_0}}^\vee}} R\oplus \bigoplus_{\substack{i,j\not\in J\\ i<j }} \lunit{i}\lunit{j}\scalemath{0.85}{\dboxed{\alpha_{J}^\vee}} R        \]
Then $\displaystyle \hh^k(R^{|\beta|-2}_\beta)=\bigoplus_{|J|=k} F_J^\prime $ and $\hh^k(d_{|\beta|-2}^\prime)\paren{F_{J}^\prime}\subseteq G_{J}$ $\implies \hh^k(d_{|\beta|-2}^\prime)$ is a block matrix.
\end{lemma}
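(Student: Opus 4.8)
The plan is to follow the template of the proof of \cref{gjlem}: read off the decomposition of $\hh^k(R^{|\beta|-2}_\beta)$ from the diagrammatic basis recorded in \cref{basisappend}, and derive the block structure of the differential from the (Hochschild) barbell and one-colour cohomology relations. First I would establish the structural identification $\hh^k(R^{|\beta|-2}_\beta)=\bigoplus_{1\le i<j\le n-1}\hh^k(B_iB_j)$. After discarding the $C_{ii}$ summands (whose image under $\hh^\bullet(d'_{|\beta|-2})$ vanishes, as noted before the lemma) and keeping only the representative $C_{ij}$ with $i<j$ of each pair $\{C_{ij},C_{ji}\}$ (harmless since the two have equal image, and kernels are handled by the easy-kernel trick of \cref{fjker}), the term $R^{|\beta|-2}_\beta$ is literally $\bigoplus_{i<j}B_iB_j$; here the running hypothesis that every colour $\sigma_i$ occurs in $\beta$ guarantees that each pair $\{i,j\}$ does appear as a length-two subword in at least one order. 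Since $\hh^k$ commutes with finite direct sums, this identification follows.

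Next I would feed in \cref{basisappend}: for $s\ne t$ it provides an $R$-basis of $\hh^\bullet(B_sB_t)$ with exactly one generator for each $J\subseteq[n-1]$, placed in Hochschild degree $|J|$, whose diagrammatic shape is determined by which of $s,t$ lie in $J$ --- two Hochschild startdots when $s,t\in J$; a Hochschild startdot on the strand lying in $J$ and an ordinary startdot on the other when exactly one of them does; two ordinary startdots when neither does; in every case decorated by the exterior box carrying the product of the remaining $\alpha_c^\vee$, $c\in J$. Re-indexing $\bigoplus_{i<j}\hh^k(B_iB_j)$ by collecting these generators according to their associated $J$, the four membership patterns match the four summands in the definition of $F_J'$ term by term (e.g.\ the pattern ``$i\in J,\ j\notin J,\ i<j$'' produces the third summand $\lhunit{i}\lunit{j}\scalemath{0.85}{\dboxed{\widehat{\alpha_J^i}^\vee}}R$), and the count $\binom{k}{2}+k(n-1-k)+\binom{n-1-k}{2}=\binom{n-1}{2}$ confirms that nothing is omitted or double-counted. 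This yields $\hh^k(R^{|\beta|-2}_\beta)=\bigoplus_{|J|=k}F_J'$.

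For the second assertion, \eqref{rbetaeq} shows that $d'_{|\beta|-2}$ restricted to a summand $B_iB_j$ is, up to sign, the difference of the two enddots $B_iB_j\to B_j$ and $B_iB_j\to B_i$, so $\hh^k(d'_{|\beta|-2})$ is computed diagrammatically by stacking these enddots onto a basis diagram. The only local simplifications needed are: an enddot stacked on a Hochschild startdot of colour $s$ equals $\scalemath{0.85}{\dboxed{\alpha_s^\vee}}$ (from the Hochschild barbell and one-colour cohomology relations, the bidegree fixing the scalar), and an enddot stacked on an ordinary startdot of colour $s$ equals $\boxed{\alpha_s}$ (the barbell relation). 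Running this through each of the four types of generator of $F_J'$, the enddot on the strand that gets killed either turns a Hochschild startdot into $\scalemath{0.85}{\dboxed{\alpha_c^\vee}}$ --- which merges with the exterior box to give precisely $\scalemath{0.85}{\dboxed{\alpha_J^\vee}}$ when the surviving colour is not in $J$, and $\scalemath{0.85}{\dboxed{\widehat{\alpha_J^{c'}}^\vee}}$ when it is --- or multiplies the generator by the scalar $\alpha_c\in R$. In every case the output is an $R$-multiple of one of the distinguished generators of $G_J$ listed in \cref{gjlem} with the \emph{same} subset $J$, so $\hh^k(d'_{|\beta|-2})(F_J')\subseteq G_J$; as \cref{gjlem} also splits the target as $\bigoplus_{|J|=k}G_J$, the matrix of $\hh^k(d'_{|\beta|-2})$ is block-diagonal along the $J$-grading, which is the claim.

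The genuinely fiddly part is this last step: it hinges on the precise normal forms of \cref{basisappend}, and one has to check, case by case and using polynomial (and exterior polynomial) forcing alongside the barbell and one-colour cohomology relations, that no simplification of $(\text{enddot})\circ(\text{generator})$ ever produces a diagram whose exterior decoration corresponds to a subset different from $J$. Granting those normal forms, the identification of $\hh^k(R^{|\beta|-2}_\beta)$ and the block computation are both routine.
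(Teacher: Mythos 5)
Your proposal is correct and follows essentially the same route as the paper: the paper's proof is the one-line remark that the argument is analogous to \cref{gjlem}, i.e.\ the decomposition is read off from the diagrammatic bases of $\hh^k(B_iB_j)$ in \cref{basisappend} (one generator per subset $J$ with $|J|=k$, sorted by which of $i,j$ lie in $J$), and the containment $\hh^k(d'_{|\beta|-2})(F_J')\subseteq G_J$ comes from the barbell and Hochschild barbell relations contracting one strand at a time without changing the exterior decoration. Your write-up simply makes explicit the bookkeeping (the reduction to $\bigoplus_{i<j}B_iB_j$, the count $\binom{k}{2}+k(n-1-k)+\binom{n-1-k}{2}=\binom{n-1}{2}$, and the case-by-case action of the enddots) that the paper leaves implicit.
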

\begin{proof}
The proof is analogous to above.
\end{proof}

The two lemmas above imply that the last three terms of $\hh^k(R_\beta^\bullet)$ actually splits as a direct sum of complexes
\begin{equation}
\label{rbetasplit}
    \ldots \bigoplus_{|J|=k} \paren{\to F_J^\prime \to G_J\to \scalemath{0.85}{\dboxed{\alpha_{J}^\vee}} \, R } 
\end{equation} 

\begin{example}
\label{ex1}
    Take $n=4$ so $R=\kb[\alpha_1, \alpha_2, \alpha_3]$. Take $J=\set{1,2}\subseteq \set{1,2,3}$. The last 3 terms of the $J$ part of the complex $\hh^2(R^\bullet_\beta)$ (modulo $Q$ grading shifts) will be
\[ \scalemath{0.9}{\lhunit{1}\lhunit{2} R \oplus \lhunit{1}\lunit{3} \scalemath{0.85}{\dboxed{\alpha_2^\vee}}  R \oplus \lhunit{2}\lunit{3} \scalemath{0.85}{\dboxed{\alpha_1^\vee}} R\ \raisebox{-3.5ex}{$\underset{\hh^2(d_{|\beta|-2}^\prime)}{\xrightarrow{\begin{pmatrix}
    -1  & -\alpha_3  & 0 \\
    1 & 0 & -\alpha_3 \\
     0  & 1  & 1
\end{pmatrix}}}$} \lhunit{1} \scalemath{0.85}{\dboxed{\alpha_2^\vee}} R\oplus \lhunit{2} \scalemath{0.85}{\dboxed{\alpha_1^\vee}} R \oplus \lunit{3} \scalemath{0.85}{\dboxed{\alpha_1^\vee\alpha_2^\vee}} R \ \raisebox{-2ex}{$\underset{\hh^2(d_{|\beta|-1})}{\xrightarrow{\begin{pmatrix}
    1 & 1 & \alpha_3
\end{pmatrix}}}$} \scalemath{0.85}{\dboxed{\alpha_1^\vee\alpha_2^\vee}} R } \]
  One then sees that this is the same as the last three terms of $K_\bullet(1,1,\alpha_3)$ and thus by \cref{koszulvanish} the cohomology at the last 2 spots is 0.   
\end{example}

\begin{maintheorem}
\label{mainpostheorem}
    Let $\beta$ be \un{any} positive braid on $n$ strands with at least one $\sigma_i$ for $1\le i \le n-1$. Then as $R-$modules
  \[ (a) \ \paren{\overline{\hhh}(\beta)^{A=0, T=\ell}}=  \begin{cases} \kb(|\beta|) & \ell=|\beta| \\
    0 & \ell=|\beta|-1
\end{cases} \qquad (b) \ \paren{\overline{\hhh}(\beta)^{A=k, T=\ell}}=  \begin{cases} 0 & \ell=|\beta| \\
    0 & \ell=|\beta|-1
\end{cases}  \quad \forall k\ge 1\]
\end{maintheorem}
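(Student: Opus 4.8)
The plan is to read the answer straight off the direct-sum decomposition \eqref{rbetasplit}. By definition $\overline{\hhh}(\beta)^{A=k,T=|\beta|}$ and $\overline{\hhh}(\beta)^{A=k,T=|\beta|-1}$ are the cohomologies of $\hh^k(R_\beta^\bullet)$ at its last and second-to-last terms, and by \cref{gjlem}, \cref{fjgjlem} and \eqref{rbetasplit} those last three terms split as a direct sum, over subsets $J\subseteq[n-1]$ with $|J|=k$, of subcomplexes $\paren{\to F_J^\prime\to G_J\to \dboxed{\alpha_J^\vee}R}$. So it suffices to compute, for each such $J$, the cohomology of the $J$-summand at $\dboxed{\alpha_J^\vee}R$ and at $G_J$. (For $k>n-1$ there are no such $J$, so the statement is vacuous there.)

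First I would prove, generalizing \cref{ex1}, that for every such $J$ the three-term complex $\to F_J^\prime\to G_J\to \dboxed{\alpha_J^\vee}R$ is, up to an overall $Q$-grading shift and signs on basis vectors, the last three terms of the Koszul complex $K_\bullet(s_1^J,\ldots,s_{n-1}^J)$ over $R$, where $s_i^J=1$ for $i\in J$ and $s_i^J=\alpha_i$ for $i\notin J$. The identification is forced degree by degree: send $\dboxed{\alpha_J^\vee}R$ to $\wedge^0 E=R$; match the $n-1$ generators $\lhunit{i_0}\dboxed{\widehat{\alpha_J^{i_0}}^\vee}$ ($i_0\in J$) and $\lunit{j}\dboxed{\alpha_J^\vee}$ ($j\notin J$) of $G_J$ with the basis $e_1,\ldots,e_{n-1}$ of $\wedge^1 E$; and match the four families of generators of $F_J^\prime$ from \cref{fjgjlem} with the wedge basis $\{e_a\wedge e_b\}$ of $\wedge^2 E$ (the counts agree since $\binom{k}{2}+k(n-1-k)+\binom{n-1-k}{2}=\binom{n-1}{2}$). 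The barbell relation then turns $\hh^k(d_{|\beta|-1})$ on the slot $\lunit{j}$ into multiplication by $\alpha_j$ (an Enddot applied to $B_j$ is the barbell $\boxed{\alpha_j}$), while the Hochschild barbell relation turns it on the slot $\lhunit{i_0}$ into the identity (an Enddot applied to $B_{i_0}$ sends $\lhunit{i_0}\dboxed{\widehat{\alpha_J^{i_0}}^\vee}$ to $\dboxed{\alpha_{i_0}^\vee\,\widehat{\alpha_J^{i_0}}^\vee}=\dboxed{\alpha_J^\vee}$), exactly as in \cref{ex1}; the explicit form of $\hh^k(d_{|\beta|-2}^\prime)$ in \cref{fjgjlem}, together with these same relations, reproduces the Koszul differential $\wedge^2 E\to\wedge^1 E$. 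Signs and $Q$-shifts are immaterial for what follows.

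Granting this, both cases are immediate. If $k=0$ then $J=\emptyset$ and every $s_i^J=\alpha_i$, so the $J$-summand is the tail of $K_\bullet(\alpha_1,\ldots,\alpha_{n-1})$; since $(\alpha_1,\ldots,\alpha_{n-1})$ is a regular sequence in the polynomial ring $R$, this Koszul complex resolves $R/(\alpha_1,\ldots,\alpha_{n-1})=\kb$, so its homology is $\kb$ at the $\wedge^0$ spot and $0$ at the $\wedge^1$ spot; incorporating the $Q$-shift visible in \eqref{rbetaeq} gives $\overline{\hhh}(\beta)^{A=0,T=|\beta|}=\kb(|\beta|)$ and $\overline{\hhh}(\beta)^{A=0,T=|\beta|-1}=0$, which is $(a)$. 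If $k\ge1$, then each nonempty $J$ of size $k$ contains some $i_0$ with $s_{i_0}^J=1$ a unit, so by \cref{koszulvanish} the complex $K_\bullet(s_1^J,\ldots,s_{n-1}^J)$ is exact everywhere, in particular at its last two spots; summing over $J$ yields $\overline{\hhh}(\beta)^{A=k,T=|\beta|}=0=\overline{\hhh}(\beta)^{A=k,T=|\beta|-1}$, which is $(b)$.

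The only step requiring genuine care is the general Koszul identification: matching the four generating families of $F_J^\prime$ with $\{e_a\wedge e_b\}$ so that the signs produced by the $(-1)^{|\un{w_1}|}$ in the differential of $R_\beta^\bullet$ are compatible with the Koszul signs $(-1)^{i+1}$ — or, more honestly, so that whatever signs remain can be absorbed into a rescaling of the chosen bases. This is pure bookkeeping and introduces no idea beyond \cref{ex1}; everything else is a direct appeal to \eqref{rbetasplit}, \cref{gjlem}, \cref{fjgjlem}, \cref{koszulvanish}, and the regularity of $(\alpha_1,\ldots,\alpha_{n-1})$.
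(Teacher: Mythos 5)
Your proposal is correct and follows essentially the same route as the paper's proof: split the last three terms of $\hh^k(R_\beta^\bullet)$ via \eqref{rbetasplit} into $J$-indexed summands, identify each with the tail of the Koszul complex $K_\bullet(\{\alpha_i\}_{i\notin J},1,\ldots,1)$ using the barbell and Hochschild barbell relations, then invoke regularity of $(\alpha_1,\ldots,\alpha_{n-1})$ for $k=0$ and \cref{koszulvanish} for $k\ge 1$. The extra detail you supply on matching the generating families of $F_J^\prime$ with the wedge basis is a more explicit version of what the paper states in one line, not a different argument.
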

\begin{proof}
    $(a)$ When $A=0$, there is only one summand in \cref{rbetasplit}, and it's isomorphic to the last 3 terms in $K_\bullet(\alpha_1, \ldots, \alpha_{n-1})$. The isomorphism is because the differential in \cref{rbetaeq} alternates signs and the $R-$basis for $\hh^0(B_iB_j)$ and $\hh^0(B_i)$ are all comprised of dot morphisms so the barbell relation contracts one component at a time. Since $\alpha_1, \ldots, \alpha_{n-1}$ is a regular sequence in $R=\kb[\alpha_1, \ldots, \alpha_{n-1}]$, the result follows.\\

    $(b)$ When $A=k\ge 1$, \cref{rbetasplit} has $\binom{n-1}{k}$ summands corresponding to the $\binom{n-1}{k}$ different subsets $J$ of $[n-1]$ of size $k$. The $J$ summand 
    will be isomorphic to $K_\bullet(\set{\alpha_i }_{i\not\in J}, \overbrace{ 1, \ldots, 1}^{|J| \ many})$ as once again the differential alternates signs and the $R-$basis for $\hh^k(B_iB_j)$ and $\hh^k(B_i)$ are all comprised of Hochschild dot and dot morphisms+exterior boxes. Thus the barbell and hochschild barbell relations will contract one component at a time. Since $k\ge 1$, by \cref{koszulvanish} it follows that the cohomology of each $J$ summand is 0 everywhere. 
\end{proof}

\subsection{Negative Braids}
Given a positive braid $\beta$ let $\beta^\vee$ be the mirror dual, i.e. switch all positive crossings to negative crossings. Applying \cite[Corollary 1.12]{GHM19} ($\sbim_n=\sbim(\mathbb{C}^n, S_n)$ in \cite{GHM19} while $\sbim_n=\sbim(\mathbb{C}^n/(x_1+\ldots+x_n) , S_n)$ in our paper and hence the grading shift is by $2(n-1)$ instead of $2n$ below.), 
\begin{lemma}
\label{lem:dualcomplex}
    Let $\beta$ be a braid on $n$ strands. For $F^\bullet_{\beta}\in K^b(\sbim_n)$ we have an isomorphism of complexes 
    \[\hh^k(F^\bullet_{\beta^\vee}) \cong \un{\hom}_R^\bullet(\hh^{n-1-k}(F^\bullet_{\beta}), R) (2(n-1)) \]
\end{lemma}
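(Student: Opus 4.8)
The plan is to deduce this as a reduced-convention restatement of \cite[Corollary 1.12]{GHM19}, which already establishes precisely this Poincaré--Koszul symmetry for Rouquier complexes. First I would spell out the translation between conventions: \cite{GHM19} works with $\sbim(\mathbb{C}^n,S_n)$, whose polynomial ring has Krull dimension $n$, so their Hochschild grading is reflected by $k\mapsto n-k$ and the $Q$-twist is by $(2n)$; our $\sbim_n$ is built on $\mathbb{C}^n/(x_1+\cdots+x_n)$, so $R=\kb[\alpha_1,\ldots,\alpha_{n-1}]$ has Krull dimension $n-1$, which replaces the reflection by $k\mapsto n-1-k$ and the twist by $(2(n-1))$. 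Since the braid $\beta$, its mirror $\beta^\vee$, and the functor $F^\bullet_{(-)}$ are literally the same in both settings, no further change is needed once the dimension is accounted for.

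For a more self-contained argument I would isolate the two structural facts that drive \cite{GHM19}. Put $d=n-1$ and $V=\abrac{\alpha_1^\vee,\ldots,\alpha_{n-1}^\vee}$, so $\Lambda=\Lambda^\bullet[V]$ and $R^e$ carries the length-$d$ Koszul resolution $K_\bullet$ of $R$ with $K_i=R^e\otimes_\kb\Lambda^iV$. \emph{(i) Koszul self-duality:} $\hom_{R^e}(K_\bullet,R^e)\cong K_{d-\bullet}\otimes_\kb\Lambda^dV^*$, hence for any $R$-bimodule $B$ that is finitely generated and graded-free as a one-sided $R$-module, $\hh^k(B)\cong\mathrm{Tor}^{R^e}_{d-k}(R,B)\otimes_\kb\Lambda^dV^*$; as $\Lambda^dV^*$ is one-dimensional, this is Poincaré duality between Hochschild cohomology in degree $k$ and Hochschild homology in degree $d-k$, up to a twist by $(2(n-1))$. \emph{(ii) Duality and dualizing:} for the monoidal duality $\Db=\hom_R(-,R)$ on $\sbim_n$, tensor--hom adjunction gives $\hom_{R^e}(K_\bullet,\Db B)\cong\hom_R(K_\bullet\otimes_{R^e}B,R)$, and taking cohomology, using that the Hochschild homology of a Bott--Samelson bimodule is graded-free over $R$ (so there are no universal-coefficient $\ext^1_R$ terms), yields $\hh^k(\Db B)\cong\hom_R(\hh_k(B),R)$. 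Combining (i) and (ii) produces $\hh^k(\Db B)\cong\hom_R(\hh^{n-1-k}(B),R)$ up to the twist, naturally in $B$. Applying this termwise to the Rouquier complex, together with $\Db(F^\bullet_{\sigma_s})\cong F^\bullet_{\sigma_s^{-1}}$ (immediate from $\Db B_s\cong B_s$ and $\Db(R(1))\cong R(-1)$), hence $\Db(F^\bullet_\beta)\cong F^\bullet_{\beta^\vee}$, gives the claimed isomorphism of complexes, the $\un{\hom}_R^\bullet(-,R)$ on the right being the termwise $R$-linear dual, which reverses the homological ($T$) grading.

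The genuinely nonroutine part, beyond quoting \cite{GHM19}, is the bookkeeping: fixing the sign conventions in the Koszul self-duality of (i); confirming that the various $Q$-twists aggregate to exactly $(2(n-1))$; checking in (ii) that $\Db$ is set up to preserve the order of $\otimes_R$, so that $\Db(F^\bullet_\beta)$ is honestly $F^\bullet_{\beta^\vee}$ rather than the complex of the reversed braid word (harmless for the link closure but not \emph{a priori} for this isomorphism of complexes); and verifying the graded-freeness input of (ii), which is where it matters that each term of $F^\bullet_\beta$ is a Bott--Samelson bimodule rather than an arbitrary Soergel bimodule. Once these are in place, compatibility with the Rouquier differentials is automatic from the naturality of all the isomorphisms used.
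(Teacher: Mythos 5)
Your proposal is correct and matches the paper's approach: the paper proves \cref{lem:dualcomplex} exactly by citing \cite[Corollary 1.12]{GHM19} and noting that the realization $\mathbb{C}^n/(x_1+\cdots+x_n)$ (Krull dimension $n-1$ rather than $n$) changes the reflection to $k\mapsto n-1-k$ and the twist to $(2(n-1))$, which is precisely your first paragraph. Your additional self-contained sketch via Koszul self-duality and $\Db(F^\bullet_\beta)\cong F^\bullet_{\beta^\vee}$ goes beyond what the paper records but is consistent with it.
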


Thus for $\alpha$ a \un{negative} braid word on $n$ strands, it follows that the first 3 terms of the complex computing $\overline{\hhh}(\alpha)$ are obtained by applying $(-)^\vee=\hom_R(-, R)$\footnote{aka reversing arrows and taking the matrix transpose.} to the last 3 terms of the complexes $\hh^k(R_\beta^\bullet)$. $R^\bullet_\beta$ can still be written as in \cref{rbetaeq}, as multiples of columns now becomes multiples of rows so deleting them doesn't change the kernel of $\hh^k(d_{|\beta|-2})^T$. It is well known that the Koszul complex is self dual: the cohomology of $K_i(s_1, \ldots, s_r)^\vee$ is equal to the cohomology of $K_{r-i}(s_1, \ldots, s_r)$. Thus we see that in either case in \cref{mainpostheorem}, the cohomology is always 0 (as long as $n-1\ge 2$) and thus

\begin{maintheorem}
\label{mainnegtheorem}
    Let $\alpha$ be \un{any} negative braid on $n\ge 3$ strands with at least one $\sigma_i^{-1}$ for $1\le i \le n-1$. Then as $R-$modules
  \[  \paren{\overline{\hhh}(\alpha)^{A=k, T=\ell}}=  \begin{cases} 0 & \ell=-|\alpha| \\
    0 & \ell=-|\alpha|+1
\end{cases}  \quad \forall k\]
\end{maintheorem}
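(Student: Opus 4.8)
The plan is to deduce this from the positive-braid computation \cref{mainpostheorem} via the duality \cref{lem:dualcomplex}. Write $\alpha = \beta^\vee$ for a positive braid $\beta$; using \cref{1lowerlem} and \cref{raslemma} we may assume $\beta$ contains at least one $\sigma_i$ for every $1 \le i \le n-1$, and $|\alpha| = |\beta|$. \cref{lem:dualcomplex} gives an isomorphism of complexes $\hh^k(F^\bullet_\alpha) \cong \un{\hom}^\bullet_R(\hh^{n-1-k}(F^\bullet_\beta), R)(2(n-1))$. Since $F^\bullet_\beta \simeq R^\bullet_\beta$ in $K^b(\sbim_n)$ and applying $\hh^{n-1-k}(-)$ termwise preserves homotopy equivalences, I would replace $\hh^{n-1-k}(F^\bullet_\beta)$ by $\hh^{n-1-k}(R^\bullet_\beta)$, whose last three terms are of the shape \cref{rbetaeq} (the reductions bringing the complex into that form delete columns, which become rows after applying $\un{\hom}_R(-,R)$ and so do not affect the relevant kernels). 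Dualizing reverses arrows and transposes matrices, so the \emph{first} three terms of $\hh^k(F^\bullet_\alpha)$ — the ones in cohomological degrees $-|\alpha|,\,-|\alpha|+1,\,-|\alpha|+2$, which compute $\overline{\hhh}(\alpha)$ at $T$-degrees $-|\alpha|$ and $-|\alpha|+1$ — are precisely $\un{\hom}_R(-,R)$ applied to the last three terms of $\hh^{n-1-k}(R^\bullet_\beta)$.

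Next I would feed in the splitting \cref{rbetasplit}: by \cref{gjlem} and \cref{fjgjlem}, the last three terms of $\hh^{j}(R^\bullet_\beta)$ (with $j = n-1-k$) decompose as $\bigoplus_{|J|=j}$ of the last three terms of the Koszul complex $K_\bullet(\set{\alpha_i}_{i\not\in J}, 1, \ldots, 1)$, exactly as in the proof of \cref{mainpostheorem}. Since $\un{\hom}_R(-,R)$ commutes with finite direct sums, the first three terms of $\hh^k(F^\bullet_\alpha)$ split correspondingly, and on each $J$-summand the dual of the truncation $K_2 \to K_1 \to K_0$ is the bottom piece of the dual Koszul cochain complex, so that its cohomology at the two lowest spots is $H^0$ and $H^1$ of the full dual Koszul complex.

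The statement then falls out of a short case check. For $k \ge n$ both sides vanish because $\hh^{n-1-k}$ sits in negative cohomological degree. For $k \le n-2$ every $J$ with $|J| = n-1-k$ is nonempty, so each $K_\bullet(\set{\alpha_i}_{i\not\in J}, 1, \ldots, 1)$ has a unit among its entries and is exact by \cref{koszulvanish}; being a bounded exact complex of finitely generated free $R$-modules, its $\un{\hom}_R(-,R)$-dual is exact too, killing the cohomology at its first two spots. For $k = n-1$ the only summand is $K_\bullet(\alpha_1, \ldots, \alpha_{n-1})$, and since $\alpha_1, \ldots, \alpha_{n-1}$ is a regular sequence in $R$, self-duality of the Koszul complex gives that the $i$-th cohomology of the dual is $H_{n-1-i}(K_\bullet)$, which is $0$ for $i \ne n-1$; in particular the cohomology at $i=0$ and $i=1$ vanishes as soon as $n-1 \ge 2$. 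This inequality is exactly the hypothesis $n \ge 3$, and is the only place it is used — for $n=3$ the surviving top class of the dual Koszul complex lands at $T$-degree $-|\alpha|+2$, outside the range we compute here, matching the asymmetry recorded in the introduction (\cref{theorem:hhhneg2}). Assembling the cases gives $\overline{\hhh}(\alpha)^{A=k, T=\ell} = 0$ for $\ell \in \set{-|\alpha|,\,-|\alpha|+1}$ and all $k$.

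The one step I expect to require care is the degree bookkeeping across the duality: pinning down that the three lowest terms of the complex computing $\overline{\hhh}(\alpha)$ are the $\un{\hom}_R(-,R)$-duals of the three highest terms of $\hh^{n-1-k}(R^\bullet_\beta)$, so that $T=-|\alpha|$ and $T=-|\alpha|+1$ really do correspond to $H^0$ and $H^1$ of a dual Koszul complex, and likewise that the truncation does not interfere with these two bottom cohomologies. Everything else — exactness of $\un{\hom}_R(-,R)$ on bounded complexes of free modules, commutation with finite direct sums, and self-duality and regularity of the Koszul complex — is formal.
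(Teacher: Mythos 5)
Your proposal is correct and follows essentially the same route as the paper: apply \cref{lem:dualcomplex} to reduce to the last three terms of $\hh^{n-1-k}(R^\bullet_\beta)$ for the mirror positive braid, use the Koszul-complex identification from the proof of \cref{mainpostheorem} together with self-duality/exactness of the (dual) Koszul complexes, and observe that the surviving class sits in top degree $n-1\ge 2$. Your write-up merely makes explicit the index shift $k\mapsto n-1-k$ and the case analysis that the paper compresses into ``in either case in \cref{mainpostheorem}, the cohomology is always $0$ as long as $n-1\ge 2$.''
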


\section{$T-$degree $|\beta|-2$ for 3 strands}
\label{3strandsect}
Let $s=\sigma_1$ and $t=\sigma_2$ and $R=\kb[\alpha_s, \alpha_t]$.

\begin{theorem}
\label{theorem:hhhpos2}
Let $\beta$ be any positive braid on 3 strands containing $stst$ or $tsts$ as a subexpression. Then
\[ \paren{\overline{\hhh}(\beta)^{A=0}}^{T=|\beta|-2}=  \kb(|\beta|-4), \qquad \paren{\overline{\hhh}(\beta)^{A=1}}^{T=|\beta|-2}= \kb(|\beta|), \qquad \paren{\overline{\hhh}(\beta)^{A=2}}^{T=|\beta|-2}=  0   \]
\end{theorem}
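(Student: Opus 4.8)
\emph{Proof plan.} By definition, $\overline{\hhh}(\beta)^{A=k,\,T=|\beta|-2}$ is the cohomology of $\hh^k(R_\beta^\bullet)$ in cohomological degree $|\beta|-2$, so I only need the three consecutive terms $\hh^k(R_\beta^{|\beta|-3})\to\hh^k(R_\beta^{|\beta|-2})\to\hh^k(R_\beta^{|\beta|-1})$ and the two differentials between them. Since conjugation by $\sigma_1\sigma_2\sigma_1$ is an inner automorphism of $B_3$ interchanging $s=\sigma_1$ and $t=\sigma_2$ while preserving both $\overline{\hhh}$ and $|\beta|$, I may assume $\beta$ contains $stst$ (rather than $tsts$) as a subword; then $R_\beta^{|\beta|-3}$ contains $C_{sts}=B_sB_tB_s$ and $C_{tst}=B_tB_sB_t$, $R_\beta^{|\beta|-2}$ contains $C_{st}=B_sB_t$, $C_{ts}=B_tB_s$, and possibly $C_{ss}=B_s(-1)$, $C_{tt}=B_t(-1)$, and $R_\beta^{|\beta|-1}$ contains $C_s=B_s$, $C_t=B_t$.

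First I would extend the block decomposition of \cref{gjlem} and \cref{fjgjlem} one position further to the left: using the (Hochschild) barbell relations, exterior polynomial forcing, and the diagrammatic bases of \cref{basisappend} — in particular \cref{hhsts} for $\hh^k(B_sB_tB_s)$, along with those for $\hh^k(B_sB_t)$, $\hh^k(B_s(-1))$, $\hh^k(B_s)$ — I would check that $\hh^k(d_{|\beta|-3})$ is again block-diagonal for the decomposition indexed by subsets $J\subseteq\{s,t\}$ with $|J|=k$. This splits the relevant three-term complex into $\binom{2}{k}$ summands and reduces the proof to $A=k=0$ ($J=\varnothing$), $A=k=1$ ($J=\{s\}$ or $\{t\}$), and $A=k=2$ ($J=\{s,t\}$).

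Next I would pin down the incoming differential. The key preliminary observation is that the polynomial dot $\ds$ induces the zero map on Hochschild cohomology (up to polynomial multiples it is the antisymmetric barbell $\tfrac12(\alpha_s\otimes 1-1\otimes\alpha_s)$, which is killed by every class — this is also why $\hh^k(d_{|\beta|-2})$ annihilates the $C_{ss}$, $C_{tt}$ summands). Combined with the \cite{Mal24} differential formulas, this will show that the remaining length-$3$ subwords $C_{sss},C_{ttt},C_{sst},C_{stt},C_{tss},C_{tts}$ feed into $\hh^k(R_\beta^{|\beta|-2})$ only through their end-dot components into $C_{ss}$ and $C_{tt}$, and that these are already absorbed by the trivalent-vertex differentials $C_{sts}\to C_{ss}$, $C_{tst}\to C_{tt}$ — which is precisely the source of the independence of the answer from the particular $\beta$. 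So in each $J$-block it remains to compute the middle cohomology of a three-term complex whose incoming map is generated by the end-dot differentials $C_{sts}\to C_{st},C_{ts}$, $C_{tst}\to C_{st},C_{ts}$ and the trivalent differentials $C_{sts}\to C_{ss}$, $C_{tst}\to C_{tt}$ applied to $\hh^k(B_sB_tB_s)\oplus\hh^k(B_tB_sB_t)$, and whose outgoing map is the pair of end-dot differentials out of $C_{st}$ and $C_{ts}$ (equal up to sign on cohomology, with Hochschild dots replacing ordinary dots on the $J$-coloured strand when $k\ge 1$) and vanishes on $C_{ss},C_{tt}$. Computing kernel modulo image in each block, and tracking $Q$-degrees via the barbell relations, should then give $\kb(|\beta|-4)$, $\kb(|\beta|)$, and $0$ for $A=0,1,2$.

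The hard part will be the appearance of the Bott--Samelson bimodules $B_sB_tB_s$ and $B_tB_sB_t$: unlike in \cref{mainpostheorem}, the complex is no longer a Koszul complex, so one must work with the genuinely more intricate description of $\hh^k(B_sB_tB_s)$ in \cref{hhsts} and determine exactly how the end-dot and trivalent-vertex differentials act on that basis in each Hochschild degree $k=0,1,2$. I expect $A=2$ to be the subtlest point, where one must show the trivalent and end-dot maps out of $\hh^2(B_sB_tB_s)\oplus\hh^2(B_tB_sB_t)$ hit the entire kernel of $\hh^2(R_\beta^{|\beta|-2})\to\hh^2(R_\beta^{|\beta|-1})$, forcing vanishing; for $A=0,1$ the analogous computation should instead leave a one-dimensional cokernel, whose $Q$-degree is then fixed by the barbell relations.
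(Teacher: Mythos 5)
Your overall architecture matches the paper's proof: restrict to the three relevant terms of $R_\beta^\bullet$, use the diagrammatic bases of \cref{basisappend} (in particular \cref{hhsts}), observe that the maps $d_s,d_t$ induce zero on Hochschild cohomology so that the $C_{ss},C_{tt}$ summands lie in the kernel of the outgoing differential and the monotone length-$3$ subwords contribute only through their end-dot components, and then compute kernel modulo image degree by degree. The paper does exactly this by writing both differentials as explicit matrices. However, one step of your plan is genuinely wrong and would change the answer: the claim that the contributions of $C_{sst},C_{stt},C_{tss},C_{tts}$ (and $C_{sss},C_{ttt}$) into $C_{ss}$ and $C_{tt}$ are ``already absorbed by the trivalent-vertex differentials $C_{sts}\to C_{ss}$, $C_{tst}\to C_{tt}$,'' so that the incoming map may be taken to be generated by the images of $\hh^k(B_sB_tB_s)\oplus\hh^k(B_tB_sB_t)$ alone. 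They are not absorbed. At $A=0$, in the basis $\runit,\ \runit\bunit,\ \bunit\runit,\ \bunit$ of the middle term, the two columns coming from $\hh^0(C_{sts})$ are $(-\alpha_t,\alpha_s,\alpha_s,0)^T$ and $(1,1,1,0)^T$, and the two from $\hh^0(C_{tst})$ are $(0,\alpha_t,\alpha_t,-\alpha_s)^T$ and $(0,1,1,1)^T$; a short computation shows their $R$-span, modulo the kernel identifications $\vec{e_1}\equiv -(\vec{e_2}+\vec{e_3})\equiv \vec{e_4}$, reaches only $(\alpha_s+\alpha_t)\,\runit$ and in particular does not contain $(\alpha_t,0,0,0)^T$. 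That missing vector is precisely the end-dot column out of $C_{sst}$ (and $(0,0,0,\alpha_s)^T$ is the one out of $C_{stt}$). Dropping them yields $R/(\alpha_s+\alpha_t)\cong\kb[\alpha]$ at $A=0$ instead of $\kb(|\beta|-4)$, and the analogous failure occurs at $A=1$. This is exactly why the paper's proof records at the end that $\im \hh^i(d_{|\beta|-3})$ is generated by the images of $\hh^i(C_{sst})$, $\hh^i(C_{sts})$, $\hh^i(C_{stt})$, $\hh^i(C_{tst})$ --- all four --- and why the hypothesis that $stst$ or $tsts$ occurs as a subexpression is the right one: it guarantees all four of these subwords appear. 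The fix is easy (keep the $C_{sst}$ and $C_{stt}$ columns, which are present whenever $stst$ is), but as written your reduction discards indispensable relations.

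Two smaller points. First, your justification for $d_s$ acting by zero misidentifies it as the antisymmetric combination $\tfrac12(\alpha_s\otimes 1-1\otimes\alpha_s)$; it is defined as the symmetric one, and the vanishing on the classes of \cref{basisappend} is a diagrammatic/polynomial-forcing computation rather than an antisymmetry argument. Second, in the actual computation $A=2$ is the easy case (the kernel visibly equals the image), while the delicate bookkeeping --- the change of basis for the kernel and the identification of which columns generate the image --- happens at $A=0$ and $A=1$.
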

\begin{proof}
First suppose that our positive braid $\beta\in \mathrm{Br}_{3} $ contains $ststst$ as a subexpression. Then $\beta$ contains all possible subexpressions of length $3$ and the order for the possible subexpressions of length 3 we will use is
\[ sss \qquad sst \qquad sts \qquad tss \qquad stt \qquad tst \qquad tts \qquad ttt \]
Then the penultimate three terms of $R^\bullet_{\beta}$ will be 
\begin{align*}
    &B_s(|\beta|-5)\oplus B_sB_t(|\beta|-4)\oplus B_sB_tB_s(|\beta|-3)\oplus B_tB_s(|\beta|-4) \oplus B_sB_t(|\beta|-4)\oplus B_tB_sB_t (|\beta|-3)\oplus B_tB_s(|\beta|-4)\oplus B_t(|\beta|-5) \\
    & \underset{d_{|\beta|-3}}{\xrightarrow{ \begin{pmatrix}
        \rone & \rzero \bcounit & -\rpitchfork & \bcounit \rzero & 0 & 0 & 0 & 0 \\
        0 & \ds \bzero & \rzero \ \bzero \ \rcounit & 0 & -\rzero \dt & \bcounit \ \rzero \ \bzero & 0 & 0 \\
        0 & 0 & \rcounit \ \bzero \ \rzero & - \bzero \ds & 0 & \bzero \ \rzero \ \bcounit & \dt \ \rzero & 0 \\
        0 & 0 & 0 & 0 & \rcounit \bzero & - \bpitchfork & \bzero \rcounit & \bone
    \end{pmatrix}
    }} B_s(|\beta|-3) \oplus B_sB_t (|\beta|-2) \oplus B_tB_s(|\beta|-2) \oplus B_t (|\beta|-3) \\
&\underset{d_{|\beta|-2}}{\xrightarrow{ \begin{pmatrix}
\ds & - \rzero \bcounit &  \bcounit \rzero & 0 \\
 0  &  \rcounit \bzero &  -\bzero \rcounit & \dt
\end{pmatrix} }}
     B_s(|\beta|-1)\oplus B_t(|\beta|-1)
\end{align*}

$\bullet$ The partial complex for $\overline{\hhh}^{A=0}$ will then be (for notation purposes, let $\overline{ \runit (n)}= \runit \, R(|\beta|+n)$, etc)

\begin{align*}
    &\overline{\runit(-5)}\oplus \overline{\runit\bunit(-4)} \oplus \overline{\runit\bunit \runit(-3)}\oplus \overline{\rpitchcup (-3)} \oplus \overline{\bunit \runit(-4)} \oplus \overline{\runit\bunit (-4)}\oplus \overline{\bunit \runit \bunit (-3)} \oplus \overline{\bpitchcup (-3)}\oplus \overline{\bunit \runit(-4)}\oplus \overline{\bunit (-5)}  \\
    & \scalemath{0.9}{\underset{\hh^0(d_{|\beta|-3})}{\xrightarrow{\begin{pmatrix}
        \alpha_s & \alpha_t & -\alpha_t & 1& \alpha_t & 0 & 0 & 0 & 0& 0 \\
        0 & 0 & \alpha_s & 1 & 0 & 0 & \alpha_t & 1 & 0  & 0\\
        0 & 0 & \alpha_s & 1 & 0 & 0 & \alpha_t & 1 & 0 & 0 \\
        0 & 0 & 0  & 0 & 0 & \alpha_s & -\alpha_s & 1 & \alpha_s & \alpha_t
    \end{pmatrix}}}\overline{\runit(-3)}\oplus \overline{\runit\bunit(-2)} \oplus \overline{\bunit \runit(-2)} \oplus \overline{\bunit(-3)} \underset{\hh^0(d_{|\beta|-2})}{\xrightarrow{ \begin{pmatrix}
0 & - \alpha_t &  \alpha_t & 0 \\
 0  &  \alpha_s &  -\alpha_s & 0
\end{pmatrix} }}\overline{\runit(-1)} \oplus \overline{\bunit(-1)}}
\end{align*}
Let $\vec{e_i}$ be the $i-$th elementary basis vector of the matrix for $\hh^0(d_{|\beta|-2})$. Then $\ker \hh^0(d_{|\beta|-2}) = R \vec{e_1}\oplus R (\vec{e_2}+\vec{e_3})  \oplus R \vec{e_4} $. Applying the invertible matrix
\[ \begin{bmatrix}
    1 & 1 & 0  \\
    0 & 1 & 1 \\
    0 & 0 & 1  
\end{bmatrix} \implies \ker \hh^0(d_{|\beta|-2})=  R \vec{e_1}\oplus R (\vec{e_1}+\vec{e_2}+\vec{e_3}) \oplus R (\vec{e_2}+\vec{e_3} +\vec{e_4})   \]
Let $\vec{w_i}$ be the $i-$th column of $\hh^0(d_{|\beta|-3})$. 
Since $\vec{e_1}=\vec{e_4}+\vec{w_3}-\vec{w_7}$, we see that $\vec{w_6}=\alpha_s \vec{e_1}+\im \hh^0(d_{|\beta|-3})$. Thus $\vec{w_6}, \vec{w_2}, \vec{w_4}, \vec{w_8}$ generate $\im \hh^0(d_{|\beta|-3})$ as right $R$ modules. Since $\vec{w_4}=\vec{e_1}+\vec{e_2}+\vec{e_3}, \vec{w_8}=\vec{e_2}+\vec{e_3} +\vec{e_4}$
\begin{align*} \paren{\overline{\hhh}^{A=0}}^{T=|\beta|-2}=\ker \hh^0(d_{|\beta|-2}) / \im \hh^0(d_{|\beta|-3})=\overline{\runit(-3)}/(\alpha_s, \alpha_t) =\kb(|\beta|-4) 
\end{align*}
$\bullet$ The partial complex for $\overline{\hhh}^{A=1}$ will be

\begin{align*}
    &\scalemath{0.8}{\overline{\rhunit(-5)}\oplus \overline{ \runit\ \raisebox{-1ex}{ \dbox{\scalemath{0.8}{\alpha_t^\vee}}}(-5)} \oplus \overline{\rhunit \bunit(-4)}\oplus \overline{\runit \bhunit(-4)} \oplus \overline{\rhunit \bunit \runit (-3)}  \oplus \overline{\rhpitchcupout (-3)}   \oplus \overline{\rhpitchcupin (-3)}  \oplus \overline{ \runit \bhunit \runit (-3)} \oplus \overline{\bhunit \runit(-4)}\oplus \overline{\bunit \rhunit(-4)} \oplus \overline{\rhunit \bunit(-4)}\oplus \overline{\runit \bhunit(-4)} \oplus \overline{\bhunit \runit \bunit (-3)}  \oplus \overline{\bhpitchcupout (-3)}   }  \\
    & \scalemath{0.8}{  \oplus \overline{\bhpitchcupin (-3)} \oplus \overline{ \bunit \rhunit \bunit (-3)}\oplus \overline{\bhunit \runit(-4)}\oplus \overline{\bunit \rhunit(-4)} \oplus \overline{\bhunit(-5)}\oplus \overline{ \bunit\ \raisebox{-1ex}{ \dbox{\scalemath{0.8}{\alpha_s^\vee}}}(-5)}}  \underset{\hh^1(d_{|\beta|-3})}{
    \xrightarrow{
    \scalemath{0.78}{\begin{pmatrix}
    \alpha_s & 0 & \alpha_t & 0 & -\alpha_t & 1 & 1 & 0 & 0 &\alpha_t & 0 & 0 & 0 & 0 & 0 & 0 & 0 &0 & 0 &0 \\
    0 & \alpha_s & 0 & 1 & 0 & 0 &0 & -1 & 1 & 0 & 0 & 0 & 0 & 0 &0 &0 & 0 & 0 & 0 &0 \\
    0 & 0 & 0 & 0 &\alpha_s & 1 & 0 & 0 & 0 & 0 & 0 & 0 & 0 & 0 & 1 & \alpha_t &0 &0 & 0 & 0 \\
    0 & 0 &0 &0 &0 &0 & 1 & \alpha_s &0 & 0 & 0 & 0 &\alpha_t & 1 & 0 & 0 &0 &0 &0 & 0 \\
    0 & 0 &0 &0 &0 &0 & 1 & \alpha_s & 0 & 0 &0 &0 &\alpha_t & 1 & 0 &0 &0 &0 &0 &0 \\
    0 & 0 &0 &0 &\alpha_s &1 & 0 & 0 & 0 & 0& 0 &0 &0 &0 & 1 & \alpha_t & 0 &0 &0 &0 \\
    0 & 0 &0 &0 &0 &0 & 0 & 0 & 0 & 0& 0 &\alpha_s & -\alpha_s &1 &1 & 0 & \alpha_s & 0 &\alpha_t &0 \\
    0 & 0 &0 &0 &0 &0 & 0 & 0 & 0 & 0& 1 & 0 & 0 & 0 & 0 & -1 & 0 & 1 & 0 &\alpha_t
    \end{pmatrix}
    }} 
    }\\
    &\overline{\rhunit(-3)}\oplus \overline{ \runit\ \raisebox{-1ex}{ \dbox{\scalemath{0.8}{\alpha_t^\vee}}}(-3)} \oplus \overline{\rhunit \bunit(-2)}\oplus \overline{\runit \bhunit(-2)} \oplus \overline{\bhunit \runit(-2)}\oplus \overline{\bunit \rhunit(-2)} \oplus \overline{\bhunit(-3)}\oplus \overline{ \bunit\ \raisebox{-1ex}{ \dbox{\scalemath{0.8}{\alpha_s^\vee}}}(-3)} \underset{\hh^1(d_{|\beta|-2})}{\xrightarrow{\begin{pmatrix}
        0 & 0 & -\alpha_t & 0 & 0 &\alpha_t & 0 &0 \\
        0 & 0 & 0 & -1 & 1 & 0 & 0 &0 \\
        0 & 0 &0 &\alpha_s & -\alpha_s & 0 & 0&0\\
        0 & 0 & 1 & 0 & 0 & -1 & 0 &0
    \end{pmatrix}}} \\
    &\overline{\rhunit(-1)}\oplus \overline{ \runit\ \raisebox{-1ex}{ \dbox{\scalemath{0.8}{\alpha_t^\vee}}}(-1)} \oplus \overline{\bhunit(-1)}\oplus \overline{ \bunit\ \raisebox{-1ex}{ \dbox{\scalemath{0.8}{\alpha_s^\vee}}}(-1)}
\end{align*}
As before, let $\vec{e_i}$ be the $i-$th elementary basis vector of the matrix for $\hh^1(d_{|\beta|-2})$. Then $\ker \hh^1(d_{|\beta|-2}) = R \vec{e_1}\oplus R (\vec{e_4}+\vec{e_5}) \oplus R (\vec{e_3}+\vec{e_6}) \oplus R \vec{e_7} \oplus R \vec{e_2} \oplus R \vec{e_8}$. Applying the invertible matrix
\[ \begin{bmatrix}
    1 & 1 & 0 & 0 & 0 & 0\\
    0 & 1 & 0 & 1 & 0 & 0\\
    0 & 0 & 1 & 0 & 0 & 0 \\
    0 & 0 & 1 & 1 & 0 & 0 \\
    0 & 0 & 0 & 0 & 1 & 0 \\
    0 & 0 & 0 & 0 & 0 & 1
\end{bmatrix} \implies \ker \hh^1(d_{|\beta|-2})=  R \vec{e_1}\oplus R (\vec{e_1}+\vec{e_4}+\vec{e_5}) \oplus R (\vec{e_3}+\vec{e_6} +\vec{e_7}) \oplus R(\vec{e_4}+\vec{e_5}+ \vec{e_7})  \oplus R \vec{e_2} \oplus R \vec{e_8} \]
As before, let $\vec{w_i}$ be the $i-$th column of $\hh^1(d_{|\beta|-3})$. Then $$\vec{w_7}=\vec{e_1}+\vec{e_4}+\vec{e_5}, \qquad \vec{w_{15}}=\vec{e_3}+\vec{e_6} +\vec{e_7}, \qquad \vec{w_{14}}=\vec{e_4}+\vec{e_5} +\vec{e_7}, \qquad \vec{w_{15}}= \vec{w_7}-\vec{w_6}-\vec{w_{14}} $$
 As a result, $\vec{w_3}, \vec{w_4},\vec{w_6}, \vec{w_7}, \vec{w_{11}}, \vec{w_{12}}, \vec{w_{14}}$ generate $\im \hh^1(d_{|\beta|-3})$. Moreover, $\vec{e_1}=\vec{e_7}+\vec{w_7}-\vec{w_{14}}$, so as right $R$ modules,
\begin{align*} \paren{\overline{\hhh}^{A=1}}^{T=|\beta|-2}&=\ker \hh^1(d_{|\beta|-2}) / \im \hh^1(d_{|\beta|-3})=\overline{\rhunit(-3)} /(\alpha_t, \alpha_s)=\kb(|\beta|)
\end{align*}
$\bullet$ The partial complex for $\overline{\hhh}^{A=2}$ will be
\begin{align*}
&\overline{\rhunit\ \raisebox{-1ex}{ \dbox{\scalemath{0.9}{\alpha_t^\vee}}} (-5)} \oplus \overline{\rhunit \bhunit (-4)} \oplus \overline{ \rhhpitchcupout (-3)} \oplus \overline{\rhunit \bhunit \runit (-3)} \oplus \overline{\bhunit \rhunit (-4)}  \oplus \overline{\rhunit \bhunit (-4)}\oplus \overline{ \bhhpitchcupout (-3)} \oplus \overline{\bhunit \rhunit \bunit (-3)}\oplus \overline{\bhunit \rhunit (-4)} \oplus \overline{\bhunit\ \raisebox{-1ex}{ \dbox{\scalemath{0.9}{\alpha_s^\vee}}} (-5)} \\
&\scalemath{0.84}{\underset{\hh^2(d_{|\beta|-3})}{
\xrightarrow{
\begin{pmatrix}
\alpha_s & 1 & 0 & -1 & 1 & 0 & 0 & 0 & 0 &0 \\
    0 & 0 & 1 & \alpha_s & 0 & 0 & 1 & \alpha_t & 0 &0\\
    0 & 0 & 1 & \alpha_s &0 & 0 &1 &\alpha_t & 0 & 0 \\
    0 & 0 & 0 & 0 & 0 &1 & 0 & -1 & 1 & \alpha_t
\end{pmatrix}}
} \overline{\rhunit\ \raisebox{-1ex}{ \dbox{\scalemath{0.9}{\alpha_t^\vee}}} (-3)}\oplus  \overline{\rhunit \bhunit (-2)} \oplus \overline{\bhunit \rhunit (-2)} \oplus  \overline{\bhunit\ \raisebox{-1ex}{ \dbox{\scalemath{0.9}{\alpha_s^\vee}}} (-3)}  \underset{\hh^2(d_{|\beta|-2})}{
\xrightarrow{
\begin{pmatrix}
    0 & -1 & 1 & 0  \\
    0 & 1 & -1 & 0 
\end{pmatrix}}}\overline{\rhunit\ \raisebox{-1ex}{ \dbox{\scalemath{0.9}{\alpha_t^\vee}}} (-3)} \oplus \overline{\bhunit\ \raisebox{-1ex}{ \dbox{\scalemath{0.9}{\alpha_s^\vee}}} (-1)}} 
\end{align*}
As right $R-$modules $ \paren{\overline{\hhh}^{A=2}}^{T=|\beta|-2}=0$ with image generated by $\vec{w_2}, \vec{w_3}, \vec{w_6}$. \\

Our computations above show that for each $i$, $\im \hh^i(d_{|\beta|-3})$ is spanned by $\hh^i(d_{|\beta|-3})$ applied to 
$$\hh^i(C_{sst}),\qquad \hh^i(C_{sts}),\qquad \hh^i(C_{stt}), \qquad  \hh^i(C_{tst})$$   
and these all show up if $\beta$ contains $stst$ as a subexpression. (Note that $\im \hh^i(C_{sst})=\im \hh^i(C_{tss})$ and $\im \hh^i(C_{stt}) =\im \hh^i(C_{tts}) $.) Similarly with $tsts$. Finally, note that $\ker \hh^i(d_{|\beta|-2})$ doesn't change if we only have $stst$ or $tsts$ instead of $ststst$. 
\end{proof}

\subsection{Negative braids}

\cref{lem:dualcomplex} for $n=3$ says that
\begin{equation}
\hh^k(F^\bullet_{\beta^\vee}) \cong \un{\hom}_R^\bullet(\hh^{2-k}(F^\bullet_{\beta}), R) (4)     
\end{equation}

So applying this to the complexes in the previous section, we obtain the analogue of \cref{theorem:hhhpos2}. 

\begin{theorem}
\label{theorem:hhhneg2}
Let $\alpha$ be \un{any} negative braid on 3 strands containing $(stst)^{-1}$ or $(tsts)^{-1}$ as a subexpression. Then
\[ \paren{\overline{\hhh}(\alpha)^{A=0}}^{T=-|\alpha|+2}=  0, \qquad \paren{\overline{\hhh}(\alpha)^{A=1}}^{T=-|\alpha|+2}= 0, \qquad \paren{\overline{\hhh}(\alpha)^{A=2}}^{T=-|\alpha|+2}=  \kb(8-|\alpha|)  \]
\end{theorem}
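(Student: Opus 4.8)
The plan is to deduce the statement from \cref{lem:dualcomplex} in the case $n=3$, which reads $\hh^k(F^\bullet_{\beta^\vee})\cong\un{\hom}_R^\bullet(\hh^{2-k}(F^\bullet_\beta),R)(4)$, applied to the explicit complexes constructed in the proof of \cref{theorem:hhhpos2}. First I would write $\alpha=\beta^\vee$ where $\beta$ is the positive braid obtained by switching every crossing; then $|\alpha|=|\beta|$, and $\alpha$ contains $(stst)^{-1}$ (resp.\ $(tsts)^{-1}$) iff $\beta$ contains $stst$ (resp.\ $tsts$), so \cref{theorem:hhhpos2} applies to $\beta$. Since $F^\bullet_\beta\simeq R^\bullet_\beta$, the relevant portion of $\hh^{2-k}(F^\bullet_\beta)$ is literally the three-term piece governed by $\hh^{2-k}(d_{|\beta|-3})$ and $\hh^{2-k}(d_{|\beta|-2})$ written out in the proof of \cref{theorem:hhhpos2}, and by \cref{lem:dualcomplex} the complex computing $\overline{\hhh}(\alpha)^{A=k}$ is obtained from it by reversing arrows, transposing the matrices, and shifting by $(4)$. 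Thus I would record that
\[ \paren{\overline{\hhh}(\alpha)^{A=k}}^{T=-|\alpha|+2}\;\cong\;\Bigl(\ker\bigl(\hh^{2-k}(d_{|\beta|-3})\bigr)^{T}\big/\im\bigl(\hh^{2-k}(d_{|\beta|-2})\bigr)^{T}\Bigr)(4), \]
noting that this is \emph{not} the $R$-dual of $\overline{\hhh}(\beta)^{T=|\beta|-2}$, since $R$ has global dimension $2$.

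Next I would run the three resulting linear-algebra computations over $R=\kb[\alpha_s,\alpha_t]$, using the matrices already displayed. For $A=0$ (dualizing the $\overline{\hhh}^{A=2}$ complex) both $\im(\hh^2(d_{|\beta|-2}))^T$ and the left kernel of $\hh^2(d_{|\beta|-3})$ come out to be the free rank-one module spanned by $\vec{e_2}-\vec{e_3}$, so the subquotient vanishes. For $A=1$ (dualizing the $\overline{\hhh}^{A=1}$ complex) both $\im(\hh^1(d_{|\beta|-2}))^T$ and the left kernel of $\hh^1(d_{|\beta|-3})$ come out to be the free rank-two module spanned by $\vec{e_3}-\vec{e_6}$ and $\vec{e_4}-\vec{e_5}$, so again the subquotient vanishes. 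For $A=2$ (dualizing the $\overline{\hhh}^{A=0}$ complex) the left kernel of $\hh^0(d_{|\beta|-3})$ is still the free rank-one module $R\cdot(\vec{e_2}-\vec{e_3})$, but now $\im(\hh^0(d_{|\beta|-2}))^T=(\alpha_s,\alpha_t)\cdot(\vec{e_2}-\vec{e_3})$, so the subquotient is $R/(\alpha_s,\alpha_t)\cong\kb$; tracking the $Q$-grading through the $R$-duality (which reverses the internal degree) and the $(4)$-shift — using that $\vec{e_2}-\vec{e_3}$ is dual to the $\runit\bunit$-summand of $\hh^0(R^{|\beta|-2}_\beta)$, which sits in $Q$-degree $4-|\beta|$ — gives $\kb(8-|\alpha|)$.

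Finally, as in the closing paragraph of the proof of \cref{theorem:hhhpos2}, I would verify independence of the braid word: $\hh^{j}(d_{|\beta|-2})$ only involves the $C_{st},C_{ts}$ summands in cohomological degree $|\beta|-2$ and the $C_s,C_t$ summands in degree $|\beta|-1$, all present as soon as $stst$ or $tsts$ is a subexpression, so $\im(\hh^{j}(d_{|\beta|-2}))^T$ is unchanged; and deleting from $\hh^{j}(d_{|\beta|-3})$ the columns coming from subwords not sitting inside $stst$ (resp.\ $tsts$) removes only columns that are redundant for the left-kernel equations — the columns of $C_{sst},C_{sts},C_{stt},C_{tst}$ alone already cut out the left kernel, and these all occur as subwords of $stst$ (and dually of $tsts$). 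Combining the three cases gives the theorem. I expect the only real difficulty to be bookkeeping rather than mathematics: identifying the cohomology of the dualized complex at $T=-|\alpha|+2$ with $\ker/\im$ of the two transposed differentials (not with a naive dual of $\overline{\hhh}(\beta)$), and threading the degree of the surviving class through both the degree-reversing $R$-duality and the $(4)$-shift; as a consistency check I would compare against the known $\overline{\hhh}(\beta)^{A=0,T=|\beta|-2}=\kb(|\beta|-4)$ of \cref{theorem:hhhpos2}.
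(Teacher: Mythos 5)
Your proposal is correct and follows essentially the same route as the paper: apply \cref{lem:dualcomplex} with $n=3$ to the explicit three-term complexes from the proof of \cref{theorem:hhhpos2}, transpose the displayed matrices, and compute left kernels modulo images of the transposes (the paper's proof is just a terser version of this, noting the kernels are easy because most columns have a single entry, and recording the surviving generator $(\runit\bunit-\bunit\runit)^\vee(4)$, which matches your $\vec{e_2}-\vec{e_3}$ class in degree $8-|\alpha|$). Your explicit kernel/image identifications in all three $A$-degrees, the caveat that one dualizes the complex rather than the cohomology, and the word-independence check are all consistent with the paper.
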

\begin{proof}
    The kernels are fairly easy to work out in these cases as many columns in the matrices in the previous subsection only have a single entry in them. We will just note the corresponding generator for the only nonzero cohomology group above is given by
    \[ \kb\paren{\runit\bunit -\bunit\runit  (|\alpha|-2)}^\vee (4) \]
\end{proof}

\begin{remark}
By \cref{2lowerlem} and \cref{raslemma}, \cref{theorem:hhhpos2} and \cref{theorem:hhhneg2} is sufficient to compute $T-$degree $|\beta|-2$, ($-|\alpha|+2$) for any positive (negative) braid $\beta$. 
\end{remark}

\section{$T-$degree $|\beta|-2$ for general $n$}
\label{gensect}

When $n>3$, for purposes of calculating $\overline{\hhh}(\widehat{\beta})$ for $\beta\in B_n^+$, \cref{raslemma} and \cref{2lowerlem} still allows us to assume that either $\sigma_{i}\sigma_{i+1}\sigma_i \sigma_{i+1}$ or $\sigma_{i+1}\sigma_{i}\sigma_{i+1} \sigma_{i}$ appears as a subexpression in $\beta$ for all $1\le i \le n-2$. However, we may not necessarily have the subexpressions $susu$ or $usus$ when $s,u$ are not adjacent. Fortunately, \cref{1lowerlem} still applies and so $\beta$ contains one of $ssuu, susu, suus, uuss, usus, ussu$. Another thing to note is that $R_\beta^{|\beta|-3}$ now contains terms of the form $C_{stu}=B_sB_tB_u$ where $s\neq t\neq u$. The corresponding $R-$basis can be found in \cref{basisappend}.

\subsection{Case 1}

Assume that for every pair $i\neq j\in [n-1]$, $\beta\in B_n^+$ contains either  $\sigma_i \sigma_j \sigma_i \sigma_j$ or $ \sigma_j \sigma_i \sigma_j \sigma_i$ as a subexpression.

\subsubsection{$\ker \hh^\bullet(d_{|\beta|-2})$}
Given our assumptions above, $d_{|\beta|-2}$ in $ R_\beta^\bullet$ will be of the form

\begin{equation}
\label{rbetaeq2}
   \oplus_{i=1}^{n-1}C_{ii}(|\beta|-2) \oplus \overbrace{B_1B_2(|\beta|-2)\oplus B_2B_1(|\beta|-2)\cdots \oplus B_{n-2}B_{n-1}(|\beta|-2)}^{S^{|\beta|-2}_\beta}\underset{d_{|\beta|-2}}{\xrightarrow{ 
   \lcounit{i} \lzero{j}  -\lzero{i} \lcounit{j} 
   }} \overbrace{B_1(|\beta|-1)\oplus \cdots \oplus B_{n-1}(|\beta|-1)}^{R^{|\beta|-1}_\beta} 
\end{equation} 

where $S_\beta^{|\beta|-2}$ now contains both $C_{ij}$ and $C_{ji}$. Because $\hh^k(d_{|\beta|-2})|_{C_{ss}}=\raisebox{1ex}{\ds}$, by the diagrammatics in \cref{basisappend}, it follows that $\hh^k(d_{|\beta|-2})|_{C_{ss}}=0$ and so all elements of $\hh^k(C_{ii})$ will be in $\ker \hh^\bullet(d_{|\beta|-2})$. By \cref{fjgjlem2}, the rest of the kernel breaks up into blocks indexed by $J\subseteq[n-1]$. 

\begin{example}
\label{ex2}
Take $n=4$, so $R=\kb[\alpha_1, \alpha_2, \alpha_3]$. Take $J=\set{1,2}\subset \set{1,2,3}$. The differential from $F_J$ to $G_J$ of the corresponding $J$ block of the matrix $\hh^k(d_{|\beta|-2})$ (modulo $Q$ grading shifts and $R$) will be
\[ \scalemath{0.9}{\lhunit{2}\lhunit{1}\oplus \lhunit{1}\lhunit{2}\oplus \lunit{3}\lhunit{1} \scalemath{0.85}{\dboxed{\alpha_2^\vee}} \oplus \lhunit{1}\lunit{3} \scalemath{0.85}{\dboxed{\alpha_2^\vee}} \oplus \lunit{3}\lhunit{2} \scalemath{0.85}{\dboxed{\alpha_1^\vee}} \oplus \lhunit{2}\lunit{3} \scalemath{0.85}{\dboxed{\alpha_1^\vee}} \ \raisebox{-3.5ex}{$\underset{\hh^2(d_{|\beta|-2})|_{F_J}}{\xrightarrow{\begin{pmatrix}
    1 & -1 & \alpha_3 & -\alpha_3 & 0 & 0 \\
    -1 & 1 & 0 & 0 & \alpha_3 & -\alpha_3 \\
    0 & 0 & -1 & 1 & -1 & 1
\end{pmatrix}}}$} \lhunit{1} \scalemath{0.85}{\dboxed{\alpha_2^\vee}}\oplus \lhunit{2} \scalemath{0.85}{\dboxed{\alpha_1^\vee}} \oplus \lunit{3} \scalemath{0.85}{\dboxed{\alpha_1^\vee\alpha_2^\vee}}} \]
It's clear that $[ 1 \ 1 \ 0 \ 0 \ 0 \ 0 ]^T, [  0 \ 0  \ 1 \ 1 \ 0 \ 0 ]^T, [  0 \ 0  \ 0 \ 0 \ 1 \ 1 ]^T $ are elements of the kernel. Because $\hh^k(d_{|\beta|-3})(\lhunit{1}\lhunit{2}\lunit{3})\in F_J$, this is also in the kernel. We claim these generate the kernel. Let $M$ be the matrix above and let
\[ P=\scalemath{0.9}{\begin{pmatrix}
    1 & 0 & 0 & \alpha_3 & 0 & 0 \\
    1 & 0 & 0 & 0 & 1 & 0 \\
    0 & 1 & 0 & -1 & 0 & 0 \\
    0 & 1 & 0 & 0 & 0 & 1 \\
    0 & 0 & 1 & 1 & 0 & 0 \\
    0 & 0 & 1 & 0 & 0 & 0
\end{pmatrix}}\implies MP=\begin{pmatrix}
    0 & 0 & 0 & 0 & -1 &-\alpha_3 \\
    0 & 0 & 0 & 0 & 1 & 0 \\
    0 & 0 & 0 & 0 & 0 & 1
\end{pmatrix}  \]
$P$ is invertible and the rightmost $3\times 2$ submatrix of $MP$ has no nontrivial solutions and this proves the claim. 
\end{example}

\begin{lemma}
\label{any3lem}
Let \lhalfunit{i}=\lunit{i} or \lhunit{i} and let $w\in S_3$ act on $\lhalfunit{s} \lhalfunit{t} \lhalfunit{u}$ by permuting each \lhalfunit{i}. For $s,t,u$ all pairwise not equal, 
    \[\hh^\bullet(d_{|\beta|-3})\paren{ w(\lhalfunit{s} \lhalfunit{t} \lhalfunit{u} )}\in \hh^\bullet(d_{|\beta|-3})\paren{ \lhalfunit{s} \lhalfunit{t} \lhalfunit{u} } R\bigoplus_{j\neq k\neq \ell\in \set{s,t,u}} \raisebox{-3.5ex}{\begin{tikzpicture}[scale=0.6]
    \draw (0,0.1)-- (0,0.6);
    \draw[thick] (0,0) circle (1.3mm);
    
    \fill[black] (0,0.15) arc[start angle=90, end angle=270, radius=1.3mm] -- cycle;
    \node at (0,-0.5) {$j$};
    \node[dot] at (0,0.6) {};
\end{tikzpicture}} \paren{\lhalfunit{k} \lhalfunit{\ell} +\lhalfunit{\ell} \lhalfunit{k}} \ R \]
\end{lemma}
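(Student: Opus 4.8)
The plan is to compute $\hh^\bullet(d_{|\beta|-3})$ applied to each of the six permutations $w(\lhalfunit{s}\lhalfunit{t}\lhalfunit{u})$ diagrammatically, using the basis for $\hh^\bullet(B_iB_jB_k)$ with $i,j,k$ pairwise distinct recorded in \cref{basisappend}, and then to verify that all six outputs agree with the output for the identity permutation modulo the ``error terms'' spanned by $\lhalfunit{j}$-$\lhalfunit{k}\lhalfunit{\ell}+\lhalfunit{\ell}\lhalfunit{k}$ type elements (times a lower unit, i.e. a $\lunit{j}$ capped with a dot). Since $s,t,u$ are pairwise distinct, the bimodule $C_{stu}=B_sB_tB_u$ has no trivalent vertices among its generators (the colors are all different), so the differential $\hh^\bullet(d_{|\beta|-3})$ from $C_{stu}$ lands in the direct sum of $C_{st}$, $C_{su}$ (after commuting $B_t$ and $B_u$, which is allowed since $t\neq u$ and, in the $B_n^+$ setting with $s,t,u$ pairwise non-adjacent as permitted by the Case~1 reduction, $B_tB_u\cong B_uB_t$), and $C_{tu}$ — more precisely, into the various $C_{ij}$ obtained by deleting one strand. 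Concretely, $d_{|\beta|-3}$ restricted to $C_{stu}$ is given by the three enddot maps $\rcounit$ applied in the $s$, $t$, or $u$ slot respectively, with the usual alternating signs $(-1)^{|\un{w_1}|}$.

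First I would fix the ordering conventions: order the colors so that $s<t<u$ (this is harmless, by symmetry), and write out the basis of $\hh^k(C_{stu})$ as products $\lhalfunit{s}\lhalfunit{t}\lhalfunit{u}$ decorated with exterior boxes $\dboxed{\xi}$ coming from the complementary colors, exactly in the style of \cref{fjgjlem} and \cref{fjgjlem2}. Second, for the identity permutation I would apply the three enddot maps: capping off, say, the $u$-strand with $\rcounit$ turns $\lhalfunit{s}\lhalfunit{t}\lhalfunit{u}$ into $\lhalfunit{s}\lhalfunit{t}$ times an extra barbell/Hochschild-barbell factor $\boxed{\alpha_u}$ or $\dboxed{\alpha_u^\vee}$ depending on whether the $u$-slot held a plain dot or a Hochschild dot; analogously for the $s$- and $t$-slots. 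This produces the base output $\hh^\bullet(d_{|\beta|-3})(\lhalfunit{s}\lhalfunit{t}\lhalfunit{u})$, a sum of three terms lying in $C_{st}$, $C_{su}$, $C_{tu}$ respectively.

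Third — and this is the technical heart — I would run the same computation for the five nontrivial $w$, and compare. Applying an enddot to a slot of $w(\lhalfunit{s}\lhalfunit{t}\lhalfunit{u})$ produces the same bimodule $C_{ij}$ but with the two surviving units possibly in the ``wrong'' order, e.g. $\lhalfunit{t}\lhalfunit{s}$ instead of $\lhalfunit{s}\lhalfunit{t}$. Since $B_s$ and $B_t$ are distinct-color generators, $\lhalfunit{t}\lhalfunit{s}$ and $\lhalfunit{s}\lhalfunit{t}$ are genuinely different basis vectors of $\hh^\bullet(C_{st})$ (resp. $\hh^\bullet(C_{ts})$), and the relation between them is $\lhalfunit{t}\lhalfunit{s}+\lhalfunit{s}\lhalfunit{t} = (\text{enddot of third color capped with dot})\cdot(\text{barbell})$ — this is the precise statement that the symmetrization lands in the span claimed, and it follows from the one-color cohomology relation and the (Hochschild) barbell relations in the diagrammatics section, together with the fact (already used in \cref{rbetaeq}, ``$\hh^\bullet(d'_{|\beta|-2})(C_{ij})=\hh^\bullet(d'_{|\beta|-2})(C_{ji})$'') that $C_{ij}$ and $C_{ji}$ have the same image. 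So each term of $\hh^\bullet(d_{|\beta|-3})(w(\cdots))$ equals the corresponding term of the base output up to an element of the error span, and summing over the three deleted strands gives the lemma.

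The main obstacle I anticipate is bookkeeping: correctly tracking the signs $(-1)^{|\un{w_1}|}$ for each of the $6\times 3$ (permutation, deleted-strand) combinations, and making sure the exterior-box decorations $\dboxed{\widehat{\alpha_J^{i_0}}^\vee}$ transform correctly under the permutation so that the ``off-order'' basis vectors are matched with the right complementary exterior factor before invoking the symmetrization relation. I expect no conceptual difficulty beyond what \cref{ex2} already illustrates in the $n=4$, $J=\{1,2\}$ case — indeed, \cref{ex2} is precisely the prototype: there the kernel generators $[1\ 1\ 0\ 0\ 0\ 0]^T$ etc. are exactly the symmetrized combinations $\lhalfunit{k}\lhalfunit{\ell}+\lhalfunit{\ell}\lhalfunit{k}$, and the extra kernel element from $\hh^\bullet(d_{|\beta|-3})(\lhunit{1}\lhunit{2}\lunit{3})$ is the base output term — so the general argument is a colorwise/slotwise repetition of that computation.
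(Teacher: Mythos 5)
Your overall strategy is the same as the paper's: because $s,t,u$ are pairwise distinct, no trivalent vertices can occur when a letter is deleted from $C_{w(stu)}$, so $\hh^\bullet(d_{|\beta|-3})$ restricted to each of these summands is the alternating sum $\counit\,\0\,\0-\0\,\counit\,\0+\0\,\0\,\counit$ of the three enddot maps, each of which closes one factor into a barbell or Hochschild barbell $b_j\in\set{\alpha_j,\alpha_j^\vee}$ and leaves the other two units in the order they already had. The paper's entire proof is then the one-line identity (for $w=(12)$, the other cases being similar or obtained by composing transpositions)
\[
\hh^\bullet(d_{|\beta|-3})\paren{\lhalfunit{t}\lhalfunit{s}\lhalfunit{u}}=-\,\hh^\bullet(d_{|\beta|-3})\paren{\lhalfunit{s}\lhalfunit{t}\lhalfunit{u}}+b_u\paren{\lhalfunit{s}\lhalfunit{t}+\lhalfunit{t}\lhalfunit{s}},
\]
which gives the claimed containment because $-1\in R$. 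So the computation you propose to carry out is the correct one.

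However, the mechanism you describe for the comparison is wrong in two places and, taken literally, the step would fail. First, the matching is \emph{not} term by term: for $w=(12)$ the component of the permuted output landing in $C_{su}$ is $+b_t\,\lhalfunit{s}\lhalfunit{u}$ while the corresponding component of the base output is $-b_t\,\lhalfunit{s}\lhalfunit{u}$, and their difference $2b_t\,\lhalfunit{s}\lhalfunit{u}$ does not lie in the span of the symmetrized pairs. Only the full three-term alternating sums agree, and only up to the global sign $\mathrm{sgn}(w)$ --- harmless for the containment since it is an $R$-multiple, but your phrase ``all six outputs agree with the output for the identity permutation modulo the error terms'' omits it. Second, your displayed ``relation'' $\lhalfunit{t}\lhalfunit{s}+\lhalfunit{s}\lhalfunit{t}=(\cdots)$ is not an identity in $\hh^\bullet$: since $S_\beta^{|\beta|-2}$ contains $C_{st}$ and $C_{ts}$ as distinct summands, these two vectors are linearly independent and their sum is not equal to anything simpler --- it is merely a generator of the complementary summand appearing in the lemma's statement (and later of $\ker\hh^\bullet(d_{|\beta|-2})$ as in \cref{fjker}). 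Relatedly, no appeal to the one-color cohomology relation, to $B_tB_u\cong B_uB_t$, or to non-adjacency of $s,t,u$ is needed or even available here: deleting a letter from $w(stu)$ already lands in the correct two-letter summand without any commutation, and the lemma assumes only that $s,t,u$ are pairwise distinct.
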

\begin{proof}
    The assumptions on $s,t,u$ means that $\hh^\bullet(d_{|\beta|-3})=\counit \  \0 \  \0  -\0 \ \counit \ \0 + \0 \  \0 \ \counit$. Let us consider the case $w=(12)$, the rest are similar. Then
     \[\hh^\bullet(d_{|\beta|-3})\paren{ \lhalfunit{t} \lhalfunit{s} \lhalfunit{u} }=-\hh^\bullet(d_{|\beta|-3})\paren{ \lhalfunit{s} \lhalfunit{t} \lhalfunit{u} } + \raisebox{-3.5ex}{\begin{tikzpicture}[scale=0.6]
    \draw (0,0.1)-- (0,0.6);
    \draw[thick] (0,0) circle (1.3mm);
    
    \fill[black] (0,0.15) arc[start angle=90, end angle=270, radius=1.3mm] -- cycle;
    \node at (0,-0.5) {$u$};
    \node[dot] at (0,0.6) {};
\end{tikzpicture}} \paren{\lhalfunit{s} \lhalfunit{t} +\lhalfunit{t} \lhalfunit{s}}  \]
\end{proof}

Note that $\lhalfunit{s} \lhalfunit{t} \lhalfunit{u}$ is fixed in the lemma above. For example, $\hh^\bullet(d_{|\beta|-3})\paren{ \bhunit \rhunit \gunit }\in \hh^\bullet(d_{|\beta|-3})\paren{ \rhunit \bhunit \gunit} R \oplus\ldots$, but $\hh^\bullet(d_{|\beta|-3})\paren{ \bhunit \rhunit \gunit }\not\in \hh^\bullet(d_{|\beta|-3})\paren{ \rhunit \bunit \ghunit} R \oplus\ldots$

\begin{lemma}
\label{fjgjlem2}
    For $J\subseteq [n-1]$ and $|J|=k$ let
\[ F_{J}=\bigoplus_{\substack{i_0,j_0\in J \\ i_0\neq j_0}} \lhunit{i_0} \lhunit{j_0} \scalemath{0.85}{\dboxed{\widehat{\alpha_J^{i_0, j_0}}^\vee}}R\oplus\bigoplus_{\substack{i_0\in J\\j\not\in J}} \lunit{j}\lhunit{i_0} \scalemath{0.85}{\dboxed{\widehat{\alpha_J^{i_0}}^\vee}} R\oplus\bigoplus_{\substack{i_0\in J\\j\not\in J}}\lhunit{i_0}  \lunit{j} \scalemath{0.85}{\dboxed{\widehat{\alpha_J^{i_0}}^\vee}} R\oplus \bigoplus_{\substack{i,j\not\in J\\ i\neq j }} \lunit{i}\lunit{j}\scalemath{0.85}{\dboxed{\alpha_{J}^\vee}} R       \]
    Then $\displaystyle \hh^k(S^{|\beta|-2}_\beta)=\bigoplus_{|J|=k} F_J $ and $\hh^k(d_{|\beta|-2})\paren{F_{J}}\subseteq G_{J}$ $\implies \hh^k(d_{|\beta|-2})$ is a block matrix.
\end{lemma}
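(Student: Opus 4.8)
The plan is to mirror the proofs of \cref{gjlem} and \cref{fjgjlem}. Two things must be checked: the $R$-module identification $\hh^k(S^{|\beta|-2}_\beta)=\bigoplus_{|J|=k}F_J$, and the inclusion $\hh^k(d_{|\beta|-2})(F_J)\subseteq G_J$ for every $J$. The ``block matrix'' conclusion is then automatic, since \cref{gjlem} already decomposes the target $\hh^k(R^{|\beta|-1}_\beta)$ as $\bigoplus_{|J|=k}G_J$, so such an inclusion is precisely the statement that $\hh^k(d_{|\beta|-2})$ is block diagonal with blocks indexed by the subsets $J\subseteq[n-1]$ of size $k$.

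For the first point I would invoke the diagrammatic basis for $\hh^\bullet(B_iB_j)$, $i\neq j$, recorded in \cref{basisappend}; this is the ``two distinct colors'' case, strictly simpler than the $\hh^\bullet(B_sB_tB_s)$ computation carried out there in full. That basis presents a homogeneous class of Hochschild degree $k$ in $\hh^k(B_iB_j)$ as the datum of: for each of the two colors $i,j$ occurring in $C_{ij}$, whether the corresponding strand carries an ordinary dot or a Hochschild dot; together with an exterior box $\dboxed{\prod_\ell\alpha_\ell^\vee}$ over a subset of colors $\ell\notin\{i,j\}$, with the total degree forced to equal $k$. Attaching to such a class the set $J\subseteq[n-1]$ consisting of the colors carrying a Hochschild dot together with the colors appearing in its exterior box (necessarily $|J|=k$), and running over all \emph{ordered} pairs $(i,j)$ -- both $C_{ij}$ and $C_{ji}$ sit inside $S^{|\beta|-2}_\beta$ -- one recovers exactly the four families listed in $F_J$: Hochschild dots on two colors of $J$ (either order); a Hochschild dot on a color of $J$ and an ordinary dot on a color outside $J$ (either relative position); and ordinary dots on two colors outside $J$. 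Each summand is free of rank one over $R$, since the exterior decoration attached to a Hochschild dot on a color $i_0$ never involves $\alpha_{i_0}^\vee$, so the \textbf{1-color cohomology} relation produces no identifications; hence the displayed ``$=$'' is a genuine direct sum of $R$-modules, and the partition by $J$ is well defined because $J$ has been read off the diagram.

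For the second point I would evaluate $\hh^k(d_{|\beta|-2})$ on the generators of $F_J$. By \cref{rbetaeq2} the restriction of $d_{|\beta|-2}$ to $C_{ij}$ is $\lcounit{i}\lzero{j}-\lzero{i}\lcounit{j}$, which caps one of the two colors with an Enddot (up to the overall sign $(-1)^{|\un{w_1}|}$ from \cite{Mal24}). Capping a strand carrying an ordinary dot turns it, by the \textbf{Barbell} relation, into the polynomial box $\boxed{\alpha_m}$; capping a strand carrying a Hochschild dot turns it, by the \textbf{Hochschild Barbell} relation, into the exterior box $\dboxed{\alpha_m^\vee}$. In either case the other strand retains its decoration and the recorded set $J$ is unchanged: capping a color $m\notin J$ merely multiplies the polynomial part by $\alpha_m$, while capping a color $m\in J$ wedges $\alpha_m^\vee$ onto the exterior box. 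Running through the four families, each capping of $\lhunit{i_0}\lhunit{j_0}\dboxed{\widehat{\alpha_J^{i_0,j_0}}^\vee}$ lands on $\lhunit{i_0}\dboxed{\widehat{\alpha_J^{i_0}}^\vee}$ or $\lhunit{j_0}\dboxed{\widehat{\alpha_J^{j_0}}^\vee}$; capping the Hochschild strand of $\lunit{j}\lhunit{i_0}\dboxed{\widehat{\alpha_J^{i_0}}^\vee}$ gives $\lunit{j}\dboxed{\alpha_J^\vee}$ and capping its ordinary strand gives $\alpha_j\,\lhunit{i_0}\dboxed{\widehat{\alpha_J^{i_0}}^\vee}$ (and identically for the opposite ordering); and capping $\lunit{i}\lunit{j}\dboxed{\alpha_J^\vee}$ gives a polynomial multiple of $\lunit{i}\dboxed{\alpha_J^\vee}$ or $\lunit{j}\dboxed{\alpha_J^\vee}$. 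All of these are generators of $G_J$, or $R$-multiples thereof, so indeed $\hh^k(d_{|\beta|-2})(F_J)\subseteq G_J$; the $n=4$, $J=\{1,2\}$ block computed in \cref{ex2} is a concrete instance of precisely this bookkeeping.

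I do not anticipate a genuine obstacle: the whole content is the translation from \cref{basisappend} to the explicit list $F_J$ plus the barbell calculations above, both routine once \cref{basisappend} is granted. The one point needing attention -- and the reason $F_J$ here is larger than the $F_J^\prime$ of \cref{fjgjlem} -- is that one must keep \emph{both} orderings $C_{ij}$ and $C_{ji}$, because the applications in \cref{3strandsect} and \cref{gensect} require the honest kernel of $\hh^k(d_{|\beta|-2})$ rather than merely its image; but the two orderings contribute parallel generators whose differentials differ only by a sign, so nothing is complicated by this. One should also remark that no relation of Ext Soergel Calculus can mix two different $J$'s: \textbf{Barbell}, \textbf{Hochschild Barbell}, \textbf{Polynomial Forcing}, \textbf{Exterior Polynomial Forcing} and \textbf{1-color cohomology} all visibly preserve the pair consisting of the set of Hochschild-decorated colors and the set of colors occurring in the exterior box, which is immediate from the $(A,Q)$-bidegrees.
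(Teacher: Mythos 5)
Your proposal is correct and follows exactly the route the paper intends: the paper states this lemma without proof because, like \cref{fjgjlem}, it is ``analogous'' to \cref{gjlem}, whose proof is precisely ``the first follows from \cref{basisappend}, the second from the (Hochschild) barbell relations.'' You have simply written out that argument in full detail -- reading $J$ off the Hochschild dots plus the exterior box, and checking via the barbell and Hochschild barbell relations that capping a strand preserves $J$ -- including the correct observation that keeping both orderings $C_{ij}$ and $C_{ji}$ is what distinguishes $F_J$ from the $F_J^\prime$ of \cref{fjgjlem}.
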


\begin{lemma}
\label{ejlem}
For $J\subseteq [n-1]$ and $|J|=k$, we have $\hh^k(d_{|\beta|-3})(E_J)\subseteq F_J$ where
    \[ E_{J}= \bigoplus_{\substack{i_0,j_0, k_0\in J \\ i_0\neq j_0\neq k_0}} \lhunit{i_0} \lhunit{j_0} \lhunit{k_0} \scalemath{0.85}{\dboxed{\widehat{\alpha_J^{i_0, j_0, k_0}}^\vee}} \,R \oplus \bigoplus_{\substack{i_0\neq j_0\in J \\ k\not\in J}} \sbrac{\lhunit{i_0} \lhunit{j_0} \lunit{k}}_{S_3} \scalemath{0.85}{\dboxed{\widehat{\alpha_J^{i_0, j_0}}^\vee}}R\oplus\bigoplus_{\substack{i_0\in J\\j\neq k\not\in J}} \sbrac{\lhunit{i_0} \lunit{j} \lunit{k}}_{S_3} \scalemath{0.85}{\dboxed{\widehat{\alpha_J^{i_0}}^\vee}} R\oplus \bigoplus_{\substack{i,j,k\not\in J\\ i\neq j\neq k }} \lunit{i}\lunit{j} \lunit{k}\scalemath{0.85}{\dboxed{\alpha_{J}^\vee}} R\]
    where $\sbrac{abc}_{S_3}=\oplus_{w\in S_3} w(abc)$ and $F_J$ is as in \cref{fjgjlem2}.
\end{lemma}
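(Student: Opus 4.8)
The plan is to follow the pattern of the proofs of \cref{gjlem}, \cref{fjgjlem} and \cref{fjgjlem2}, the one new feature being the three-colour Bott--Samelson objects $C_{stu}=B_sB_tB_u$ with $s,t,u$ pairwise distinct that occur in $R_\beta^{|\beta|-3}$. Each summand of $E_J$ is the $R$-span of a basis vector of $\hh^k$ of such a $C_{stu}$: by \cref{basisappend}, such a basis vector is a product of a dot or a Hochschild dot on each of the three strands, decorated by a single exterior box, and its Hochschild degree equals the total number of Hochschild-type decorations, i.e. the size of the index set $J$ consisting of the colours carrying a Hochschild dot together with the colours appearing in the exterior box. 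Thus $E_J$ is a well-defined $R$-submodule of $\hh^k(R_\beta^{|\beta|-3})$, and it suffices to evaluate $\hh^k(d_{|\beta|-3})$ on each of its generators and check that the output lands in $F_J$.

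I would do this componentwise. Restricted to the three-colour terms, $d_{|\beta|-3}$ is a signed sum of ``delete one letter'' maps, and --- because the two strands flanking a deleted strand of $C_{stu}$ can never carry the same colour --- it involves no trivalent corrections and is simply a signed sum of end-dots on the three strands (the same form as in the proof of \cref{any3lem}). When an end-dot is applied to a strand of a generator of $E_J$, that strand already carries a dot or a Hochschild dot; stacking a dot on a dot produces the scalar $\alpha_s\in R$ (the barbell relation) while stacking a dot on a Hochschild dot produces $\alpha_s^\vee$ (the Hochschild barbell relation). In the first case the deleted colour $s$ was external (not in $J$) and we simply multiply the resulting two-strand diagram by $\alpha_s$; in the second case the colour moves from ``Hochschild dot'' into the exterior box and so remains in $J$. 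Since these relations merge only decorations living on one and the same strand, the block label $J$ is unchanged, and comparing the four families of generators of $E_J$ with the four families defining $F_J$ in \cref{fjgjlem2} shows that the image lies in $F_J$.

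The genuinely new point, and the step I expect to be the main obstacle, is the symmetrized families $\sbrac{\lhunit{i_0}\lhunit{j_0}\lunit{k}}_{S_3}$, $\sbrac{\lhunit{i_0}\lunit{j}\lunit{k}}_{S_3}$ (and the all-Hochschild triple) inside $E_J$: applying $\hh^k(d_{|\beta|-3})$ to a \emph{permuted} word $w(\lhalfunit{s}\lhalfunit{t}\lhalfunit{u})$ could a priori produce a two-strand diagram whose strand order is not one of the two recorded in $F_J$. This is precisely what \cref{any3lem} controls: it expresses $\hh^\bullet(d_{|\beta|-3})(w(\lhalfunit{s}\lhalfunit{t}\lhalfunit{u}))$ as an $R$-multiple of $\hh^\bullet(d_{|\beta|-3})(\lhalfunit{s}\lhalfunit{t}\lhalfunit{u})$ plus symmetric combinations $\lhalfunit{k}\lhalfunit{\ell}+\lhalfunit{\ell}\lhalfunit{k}$ multiplied by the image of an end-dot on the deleted $j$-strand (which, by the barbell or Hochschild barbell relation, reduces to $\alpha_j$ or $\alpha_j^\vee$), and after these reductions both pieces land in $F_J$ --- the symmetric combinations because $F_J$ contains \emph{both} orders $\lhunit{i_0}\lunit{j}\,R$ and $\lunit{j}\lhunit{i_0}\,R$, and likewise both orders for the all-Hochschild and the all-external pairs. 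Feeding \cref{any3lem} into the componentwise check then closes the argument; the remaining configurations (three external colours, two Hochschild dots plus one external colour, three Hochschild dots) are handled in exactly the same way and are strictly simpler.
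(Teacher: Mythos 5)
Your argument is correct, and it is essentially the paper's intended one: the paper states \cref{ejlem} without proof, relying implicitly on the same componentwise application of the barbell and Hochschild barbell relations used for \cref{gjlem} and \cref{fjgjlem}, together with the observation that for pairwise distinct colours the differential is a pure signed sum of end-dots with no trivalent corrections. Your reduction of the permuted-word issue to \cref{any3lem} is a harmless slight detour (for the mere containment it suffices that $F_J$ already contains both orderings of every two-strand diagram), but it is consistent with how the paper itself uses \cref{any3lem} later in the proof of \cref{fjker}.
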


\begin{prop}
\label{fjker}
Suppose that $\beta\in B_n^+$ contains the subexpressions $\sigma_i\sigma_j$ and $\sigma_j \sigma_i$ for any $i,j\in [n-1]$. Fix $J\subseteq [n-1]$. Then
 $$\ker \hh^\bullet(d_{|\beta|-2})|_{F_J}=\hh^\bullet(d_{|\beta|-3})(E_J)\oplus\bigoplus_{\substack{i_0,j_0\in J \\ i_0\neq j_0}} \sbrac{\lhunit{i_0} \lhunit{j_0}}_{+} \scalemath{0.85}{\dboxed{\widehat{\alpha_J^{i_0, j_0}}^\vee}}R\oplus\bigoplus_{\substack{i_0\in J\\j\not\in J}} \sbrac{\lunit{j}\lhunit{i_0}}_{+} \scalemath{0.85}{\dboxed{\widehat{\alpha_J^{i_0}}^\vee}} R\oplus \bigoplus_{\substack{i,j\not\in J\\ i\neq j }} \sbrac{\lunit{i}\lunit{j}}_{+}\scalemath{0.85}{\dboxed{\alpha_{J}^\vee}} R $$ 
 where $\sbrac{ab}_+=ab+ba$.
\end{prop}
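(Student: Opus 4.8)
The plan is to compute $\ker\hh^\bullet(d_{|\beta|-2})|_{F_J}$ one subset $J$ at a time, exactly in the spirit of \cref{ex2}. By \cref{fjgjlem2} the differential is block diagonal along $\hh^k(S_\beta^{|\beta|-2})=\bigoplus_{|J|=k}F_J$, with $\hh^\bullet(d_{|\beta|-2})(F_J)\subseteq G_J$, so fix $J$. Every basis vector of $F_J$ is a product of two (ordinary or Hochschild) startdots on two distinct colours together with an exterior box, and interchanging those two colours pairs up the generators. Accordingly write $F_J=F_J^{+}\oplus F_J^{-}$, the first summand spanned by the symmetric combinations $[\,\cdot\,]_{+}$ and the second by the antisymmetric ones; the free $R$-modules making up $F_J^{+}$ are precisely the three families $[\lhunit{i_0}\lhunit{j_0}]_{+}\dboxed{\widehat{\alpha_J^{i_0,j_0}}^\vee}R$, $[\lunit{j}\lhunit{i_0}]_{+}\dboxed{\widehat{\alpha_J^{i_0}}^\vee}R$, $[\lunit{i}\lunit{j}]_{+}\dboxed{\alpha_J^\vee}R$ appearing on the right-hand side of the statement.

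The next step is to identify the block. Since $d_{|\beta|-2}$ restricted to each $C_{ij}$ is the alternating map $\lcounit{i}\lzero{j}-\lzero{i}\lcounit{j}$ (cf. \cref{rbetaeq2}), and $\hh^\bullet$ of it is simplified by the barbell and Hochschild barbell relations — the ordinary barbell producing a factor $\alpha_j$ for $j\notin J$, the Hochschild barbell reinserting a factor $\alpha_{i_0}^\vee$ for $i_0\in J$ — one checks that $\hh^\bullet(d_{|\beta|-2})$ kills $F_J^{+}$, and that on $F_J^{-}$, under $F_J^{-}\cong\wedge^2 E$ and $G_J\cong\wedge^1 E$ (with $E=R^{\,n-1}$ on the colour basis; recall $G_J$ from \cref{gjlem}), it becomes the degree-two Koszul differential $d_2$ of $K_\bullet(\{\alpha_j\}_{j\notin J},\underbrace{1,\dots,1}_{|J|})$ — the numbers $\alpha_j$ and $1$ being exactly the Koszul datum. (Using the colour-ordered basis rather than the antisymmetrised one removes the otherwise harmless factor of $2$.) Hence $\ker\hh^\bullet(d_{|\beta|-2})|_{F_J}=F_J^{+}\oplus\ker d_2$.

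It remains to compute $\ker d_2$ and locate it inside $\hh^\bullet(d_{|\beta|-3})(E_J)$. If $|J|\ge 1$ then $1$ is among the Koszul data, so by \cref{koszulvanish} the complex $K_\bullet$ is exact; if $|J|=0$ the datum is the regular sequence $\alpha_1,\dots,\alpha_{n-1}$ of $R=\kb[\alpha_1,\dots,\alpha_{n-1}]$, so $K_\bullet$ is exact in positive homological degrees. Since $n\ge 4$ here (the case $n=3$ being treated in \cref{3strandsect}), $\wedge^3 E$ is nonzero and we conclude $\ker d_2=\im d_3$. Now \cref{ejlem} gives $\hh^\bullet(d_{|\beta|-3})(E_J)\subseteq F_J$, and \cref{any3lem} shows that for each triple of colours all six generators of the corresponding $S_3$-orbit in $E_J$ map, modulo $F_J^{+}$, into the single rank-one $R$-submodule generated by the image of the standard ordering; under the identifications of the previous paragraph (the exterior boxes of $E_J$ reinserting the deleted $\alpha^\vee$'s) this generator is $d_3$ applied to the matching wedge $x_a\wedge x_b\wedge x_c$. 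Running over all triples — which all occur in $\beta$ in at least one ordering thanks to the hypothesis — we get $\hh^\bullet(d_{|\beta|-3})(E_J)+F_J^{+}=\im d_3+F_J^{+}=F_J^{+}\oplus\ker d_2=\ker\hh^\bullet(d_{|\beta|-2})|_{F_J}$, which is the proposition, the right-hand ``$\oplus$'' being understood as: the kernel is generated by $\hh^\bullet(d_{|\beta|-3})(E_J)$ together with the free module $F_J^{+}$ formed by the three listed symmetric families.

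The main obstacle is the identification in the second step: one must carry the sign prefactors $(-1)^{|\un{w_1}|}$ of \cite{Mal24} and the exterior decorations $\widehat{\alpha_J^{\cdots}}^\vee$ through both $\hh^\bullet(d_{|\beta|-2})|_{F_J^{-}}$ and $\hh^\bullet(d_{|\beta|-3})|_{E_J}$ carefully enough to match them on the nose with $d_2$ and $d_3$; once this bookkeeping is done, the exactness input from \cref{koszulvanish} and the $S_3$-collapse from \cref{any3lem} finish things formally. A secondary point to check is that the hypothesis really does force every triple of colours to appear, in at least one order, as a subword of $\beta$, so that \cref{any3lem} applies to every orbit — this follows by taking the first occurrence of each of the three colours.
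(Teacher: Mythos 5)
Your proposal is correct and follows essentially the same route as the paper: split off the symmetric combinations $[D(a)D(b)]_+$ (which the paper implements via the explicit change-of-basis matrix $P$, equivalent to your $F_J=F_J^+\oplus F_J^-$ decomposition), identify the remaining block with $d_2$ of the Koszul complex from the proof of \cref{mainpostheorem}, and use \cref{koszulvanish} together with \cref{any3lem} to conclude $\ker d_2=\im d_3=\hh^\bullet(d_{|\beta|-3})(E_J)$ even though only one subword per $S_3$-orbit of $E_J$ need appear in $R_\beta^{|\beta|-3}$. Your closing observations (the harmless factor of $2$, and that the hypothesis forces each triple of colours to occur in at least one order) are exactly the points the paper's proof also relies on.
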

\begin{proof}
For $a\neq b\in [n-1]$, let $D(a)=\lhunit{a}$ if $a\in J$ and $D(a)=\lunit{a}$ if $a\not\in J$. Then because $F_J$ now contains both $D(a)D(b)$ and $D(b)D(a)$, it follows that $D(a)D(b)+D(b)D(a)$\footnote{We have omitted the exterior box for readability purposes.} are in $\ker \hh^\bullet(d_{|\beta|-2})|_{F_J}$ which gives the rightmost part after the first $\oplus$ of our claimed decomposition above. Proceeding somewhat similarly to \cref{ex2}, let $M$ be the matrix representing $\hh^\bullet(d_{|\beta|-2})|_{F_J}$ in the basis as seen in \cref{fjgjlem2} and consider the invertible change of basis matrix $P$
\[P=\paren{\begin{array}{ccc|c c c}
    1 & 0 & \cdots & 0 & 0  & \cdots\\
    1 & 0 & \cdots & 1 & 0 & \cdots \\
    0 & 1 & \cdots & 0 & 0 & \cdots\\
    0 & 1 & \cdots & 0 & 1 & \cdots \\
\vdots& \vdots &  \ddots& \vdots & \vdots &\ddots
\end{array}} \qquad \qquad P_6=\paren{\begin{array}{ccc|c c c}
    1 & 0 & 0 & 0 & 0  & 0\\
    1 & 0 & 0 & 1 & 0 & 0 \\
    0 & 1 & 0 & 0 & 0 & 0\\
    0 & 1 & 0 & 0 & 1 & 0 \\
    0 & 0 &  1 & 0 & 0 &0 \\
     0 & 0 &  1 & 0 & 0 &1
\end{array}}\]
where each column in the first half has exactly 2 1's in adjacent rows (aka the basis vector $D(a)D(b)+D(b)D(a)$, and each column in the second half has exactly one 1 in the even rows. $P_6$ is shown on the right above in case things aren't clear. As a result $MP=\sbrac{M_1|M_2}$ where $M_1$ is the zero matrix while $M_2$ will precisely be the matrix $\hh^k(d^\prime_{|\beta|-2})|_{F_J^\prime}$ from \cref{betabeta1sect} (e.g, the even columns of the matrix in \cref{ex2} is the matrix in \cref{ex1}) which from the proof of \cref{mainpostheorem} is $d_2$ in some Koszul complex \cref{koszuleq} whose cohomology vanishes at $d_2$. Thus $\ker M_2=\im d_3$ which one can see is exactly $\hh^\bullet(d_{|\beta|-3})(E_J)$. Note, not all terms of $E_J$ necessarily appear in $R_\beta^{|\beta|-3}$, only one in each $S_3$ orbit. But by \cref{any3lem} this is sufficient. Informally \cref{any3lem} is saying that $E_J``="\Lambda^3(\oplus_{a\in J}D(a))$.
\end{proof}

\begin{prop}
\label{kerprop}
$\ker \hh^k(d_{|\beta|-2})=\bigoplus_{|J|=k} \mathscr{C}_J$ where\footnote{We have omitted the $Q$ grading shift in the first three $R-$submodules as they cancel out in cohomology anyways}
\[ \mathscr{C}_J=\hh^\bullet(d_{|\beta|-3})(E_J) \oplus N_J \oplus T_J \oplus\bigoplus_{j\not\in J}\lunit{j}\scalemath{0.85}{\dboxed{\alpha_{J}^\vee}} \, R(|\beta|-3)\oplus \bigoplus_{i_0\in J}\lhunit{i_0} \scalemath{0.85}{\dboxed{\widehat{\alpha_J^{i_0}}^\vee}} \, R (|\beta|-3)
\]
\begin{enumerate}[(1)]
    \item $\displaystyle N_J=\bigoplus_{\substack{i_0,j_0\in J \\ |i_0- j_0|=1}} \sbrac{\lhunit{i_0} \lhunit{j_0}}_+ \scalemath{0.85}{\dboxed{\widehat{\alpha_J^{i_0, j_0}}^\vee}}R\oplus\bigoplus_{\substack{i_0\in J, j\not\in J\\ |i_0-j|=1}} \paren{\sbrac{\lunit{j}\lhunit{i_0}}_+ +\lhunit{i_0}} \scalemath{0.85}{\dboxed{\widehat{\alpha_J^{i_0}}^\vee}} R\oplus \bigoplus_{\substack{i,j\not\in J\\ i=j -1}} \paren{\sbrac{\lunit{i}\lunit{j}}_+ +\lunit{i} }\scalemath{0.85}{\dboxed{\alpha_{J}^\vee}} R   $
    \item $\displaystyle T_J=\bigoplus_{\substack{i_0,j_0\in J \\ |i_0- j_0|>1}} \sbrac{\lhunit{i_0} \lhunit{j_0}}_{+} \scalemath{0.85}{\dboxed{\widehat{\alpha_J^{i_0, j_0}}^\vee}}R\oplus\bigoplus_{\substack{i_0\in J, j\not\in J\\ |i_0-j|>1}} \sbrac{\lunit{j}\lhunit{i_0}}_{+} \scalemath{0.85}{\dboxed{\widehat{\alpha_J^{i_0}}^\vee}} R\oplus \bigoplus_{\substack{i, j\not\in J\\ |i-j|>1}}  \sbrac{\lunit{i}\lunit{j}}_{+}\scalemath{0.85}{\dboxed{\alpha_{J}^\vee}} R$
\end{enumerate}
\end{prop}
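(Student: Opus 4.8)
The plan is to read off $\ker \hh^k(d_{|\beta|-2})$ by first splitting the source into the $C_{ii}$-part and the $S_\beta^{|\beta|-2}$-part, applying the block structure already established, and then performing a harmless change of basis to reach the stated form. First I would write the source of $\hh^k(d_{|\beta|-2})$ as $\hh^k\paren{\bigoplus_i C_{ii}} \oplus \hh^k(S_\beta^{|\beta|-2})$ (all $C_{ii}$ occur under the Case 1 hypothesis, since $\sigma_i\sigma_i$ is a subexpression of $\sigma_i\sigma_j\sigma_i\sigma_j$). As noted above, $\hh^k(d_{|\beta|-2})|_{C_{ss}}=0$ by the diagrammatics in \cref{basisappend}, so all of $\hh^k\paren{\bigoplus_i C_{ii}}$ sits in the kernel; and by \cref{fjgjlem2} we have $\hh^k(S_\beta^{|\beta|-2})=\bigoplus_{|J|=k}F_J$ with $\hh^k(d_{|\beta|-2})$ block diagonal, the $J$-block being $F_J\to G_J$. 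Hence
\[
\ker\hh^k(d_{|\beta|-2}) \ = \ \hh^k\paren{\bigoplus_i C_{ii}} \ \oplus \ \bigoplus_{|J|=k}\ker\paren{\hh^k(d_{|\beta|-2})|_{F_J}},
\]
and it remains to present both summands in $J$-graded form.

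For the $C_{ii}$-part, I would use that $C_{ii}=B_i(-1)$ and that the diagrammatic basis of $\hh^k(B_i)$ from \cref{basisappend}, together with the $1$-color cohomology relation, exhibits $\hh^k(B_i)$ as free on the classes $\lunit{i}\dboxed{\alpha_K^\vee}$ for $K\subseteq[n-1]\setminus\set{i}$ with $|K|=k$, and $\lhunit{i}\dboxed{\alpha_L^\vee}$ for $L\subseteq[n-1]\setminus\set{i}$ with $|L|=k-1$. Re-indexing the first family by $J:=K$ (so $i\notin J$) and the second by $J:=L\sqcup\set{i}$ (so $i\in J$) turns these into precisely the last two summands $\bigoplus_{j\notin J}\lunit{j}\dboxed{\alpha_J^\vee}\,R$ and $\bigoplus_{i_0\in J}\lhunit{i_0}\dboxed{\widehat{\alpha_J^{i_0}}^\vee}\,R$ of $\mathscr{C}_J$, the shift $(|\beta|-3)=(|\beta|-2)-1$ coming from $B_i(-1)$. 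For the $F_J$-part I would invoke \cref{fjker} (its hypothesis is implied by the Case 1 assumption), which gives $\ker\paren{\hh^\bullet(d_{|\beta|-2})|_{F_J}}=\hh^\bullet(d_{|\beta|-3})(E_J)\oplus\bigoplus(\text{the }\sbrac{\,\cdot\,}_+\text{-terms over all index pairs})$, with $E_J$ as in \cref{ejlem}.

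It then remains to reorganize these $\sbrac{\,\cdot\,}_+$-terms according to the adjacency of the index pair. For a pair at distance $>1$ I keep the term untouched; the collection of these is $T_J$. For a pair at distance $1$, I perform the invertible change of basis (uni-triangular once the bare $C_{ii}$-classes are listed first) that replaces $\sbrac{\lunit{j}\lhunit{i_0}}_+\dboxed{\widehat{\alpha_J^{i_0}}^\vee}$ by $\paren{\sbrac{\lunit{j}\lhunit{i_0}}_+ + \lhunit{i_0}}\dboxed{\widehat{\alpha_J^{i_0}}^\vee}$ and $\sbrac{\lunit{i}\lunit{j}}_+\dboxed{\alpha_J^\vee}$ (with $i=j-1$) by $\paren{\sbrac{\lunit{i}\lunit{j}}_+ + \lunit{i}}\dboxed{\alpha_J^\vee}$, while retaining the bare generators $\lhunit{i_0}\dboxed{\widehat{\alpha_J^{i_0}}^\vee}$ and $\lunit{i}\dboxed{\alpha_J^\vee}$ of the $C_{ii}$-part and leaving the $\sbrac{\lhunit{i_0}\lhunit{j_0}}_+$-terms as they are; the collection of the modified distance-$1$ terms is exactly $N_J$. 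This is legitimate because the added generators already lie in the kernel — they are the $C_{i_0i_0}$-, resp. $C_{ii}$-classes identified in the previous step, of matching bidegree — so the total span (hence the kernel) is unchanged; and, after erasing exterior boxes, the underlying diagram words of all the generators involved are pairwise distinct, so $\hh^\bullet(d_{|\beta|-3})(E_J)\oplus N_J\oplus T_J\oplus(\text{the last two summands})$ is a genuine internal direct sum. Summing over $|J|=k$ yields the proposition.

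I expect the one real obstacle to be this final reorganization: one must check that the adjacency split exhausts the $\sbrac{\,\cdot\,}_+$-terms, that the change of basis for the distance-$1$ terms keeps the sum direct — in particular that the modified generators stay independent of the retained bare $C_{ii}$-classes even when a single $i_0\in J$ is adjacent to two indices outside $J$ (three related generators sharing the box $\widehat{\alpha_J^{i_0}}^\vee$) — and to explain the asymmetry between $N_J$ and $T_J$. Folding the extra summand into $N_J$ rather than keeping the plain $\sbrac{\,\cdot\,}_+$-term and the bare $C_{ii}$-class separate is a deliberate choice, made so that the subsequent computation of $\overline{\hhh}$ in this $T$-degree will be transparent: it is precisely these $N_J$-combinations that will pair off against the part of $\im\hh^\bullet(d_{|\beta|-3})$ produced by the Bott--Samelson terms $C_{i_0ji_0}$ with $|i_0-j|=1$, a part not already contained in $\hh^\bullet(d_{|\beta|-3})(E_J)$.
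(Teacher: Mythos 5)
Your proposal is correct and follows essentially the same route as the paper: the paper's (much terser) proof likewise takes \cref{fjker}, adjoins the classes of $\hh^k(C_{ii})$ (all in the kernel since $\hh^k(d_{|\beta|-2})|_{C_{ii}}=0$), splits the $\sbrac{\,\cdot\,}_+$-terms by adjacency into $N_J$ and $T_J$, and justifies the modified distance-$1$ generators by the same triangular change of basis, performed precisely so that $N_J$ cancels against image vectors in the next subsection. Your added checks (the $J$-re-indexing of the $C_{ii}$ classes and the directness of the sum after the basis change) are details the paper leaves implicit, not a different argument.
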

\begin{proof}
    We have just added the elements of $\hh^k(C_{ii})=\hh^k(B_i(-1))$ (which recall are all in the kernel) to \cref{fjker} and separated out the non $\hh^\bullet(d_{|\beta|-3})(E_J)$ part of \cref{fjker} into two parts: $N_J$ where $i,j$ are close together and $T_J$ where $i,j$ are far apart. In addition for $N_J$ we have performed a change of basis whose matrix is clearly lower triangular for $N_J$ so that it will cancel out with certain image vectors in the next subsection.
\end{proof}

\subsubsection{Cohomology at $\hh^\bullet(d_{|\beta|-2})$}

\begin{theorem}
\label{prebeta2theorem}
    Let $\beta\in B_n^+$ be \un{any} positive braid on $n$ strands s.t. for every pair $i\neq j\in [n-1]$, $\beta$ contains either  $\sigma_i \sigma_j \sigma_i \sigma_j$ or $ \sigma_j \sigma_i \sigma_j \sigma_i$ as a subexpression. Then 
\[ \paren{\overline{\hhh}(\beta)^{A=k}}^{T=|\beta|-2}= 
\begin{cases}
 \kb(|\beta|-4) & k=0 \\
 \kb(|\beta|) & k=1 \\
  0 & k\ge2
\end{cases}
  \]
\end{theorem}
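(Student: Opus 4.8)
The plan is to read the group in question as $\ker\hh^k(d_{|\beta|-2})/\im\hh^k(d_{|\beta|-3})$ and to use that the numerator has already been pinned down in \cref{kerprop} as $\bigoplus_{|J|=k}\mathscr{C}_J$. By construction the summand $\hh^\bullet(d_{|\beta|-3})(E_J)$ of each $\mathscr{C}_J$ lies in $\im\hh^k(d_{|\beta|-3})$, so it dies in the quotient immediately; everything therefore reduces to describing the rest of $\im\hh^k(d_{|\beta|-3})$ and cancelling it against the remaining summands of the $\mathscr{C}_J$, namely $N_J$, $T_J$, and the ``isolated'' pieces $\bigoplus_{j\notin J}\lunit{j}\dboxed{\alpha_J^\vee}R$ and $\bigoplus_{i_0\in J}\lhunit{i_0}\dboxed{\widehat{\alpha_J^{i_0}}^\vee}R$.

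First I would list generators for $\im\hh^k(d_{|\beta|-3})$ by applying $\hh^k(d_{|\beta|-3})$ to the diagrammatic bases of \cref{basisappend} on each $\hh^k(C_{\un x})$, with $\un x$ a length-$3$ subword of $\beta$, sorted by the shape of $\un x$. Three distinct colours give, via \cref{ejlem} and \cref{any3lem}, exactly the $\hh^\bullet(d_{|\beta|-3})(E_J)$ vectors. Shape $\sigma_i\sigma_i\sigma_j$ or $\sigma_i\sigma_j\sigma_j$ gives, after the $f_{s,1}$ differential and the (Hochschild) barbell relation, Koszul-type vectors carrying the polynomial $\alpha_i$, exactly in the spirit of \cref{ex1} and \cref{ex2}. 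Shape $\sigma_i\sigma_j\sigma_i$ or $\sigma_j\sigma_i\sigma_j$ gives vectors built from the pitchfork/trivalent morphism on $B_iB_jB_i$, which I would compute from the explicit basis of $\hh^\bullet(B_sB_tB_s)$ in \cref{hhsts}; these are the vectors that identify the isolated startdot and Hochschild-startdot generators with one another across different $i,j$. Shape $\sigma_i^3$ maps into $\hh^k(C_{ii})$. The Case~1 hypothesis that every pair has $\sigma_i\sigma_j\sigma_i\sigma_j$ (or $\sigma_j\sigma_i\sigma_j\sigma_i$) as a subexpression guarantees that all of $\sigma_i\sigma_i\sigma_j$, $\sigma_i\sigma_j\sigma_i$, $\sigma_i\sigma_j\sigma_j$, $\sigma_j\sigma_i\sigma_j$ occur; and, exactly as at the end of the proof of \cref{theorem:hhhpos2}, extra occurrences of a subword change neither kernels nor images, so one alternating subexpression per pair is enough.

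Next I would carry out the quotient. The distinct-colour vectors kill $\hh^\bullet(d_{|\beta|-3})(E_J)$. For adjacent colours $|i-j|=1$, the $\sigma_i\sigma_i\sigma_j$/$\sigma_i\sigma_j\sigma_j$ vectors cancel $N_J$ --- the lower-triangular change of basis already built into \cref{kerprop} was arranged precisely so that this cancellation is clean. For non-adjacent colours $|i-j|>1$ I would use the commutation isomorphism $B_iB_jB_i\cong B_i(-1)\otimes B_j$, which makes both the relevant basis of $\hh^\bullet(B_iB_jB_i)$ and the corresponding image vectors transparent and lets one kill $T_J$ outright. What is left in each block is then governed only by the shape-$\sigma_i\sigma_j\sigma_i$ and shape-$\sigma_i^3$ vectors, and these are the same relations as in the proof of \cref{theorem:hhhpos2} applied colour-pair by colour-pair along the chain $1-2-\cdots-(n-1)$; running that linear algebra collapses the leftover isolated pieces (and what remains of $N_J$) to a single copy of $R/(\alpha_1,\ldots,\alpha_{n-1})$ for $k=0$ and $k=1$, and to $0$ for $k\geq 2$. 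Tracking the $Q$-shifts, which coincide with the three-strand case (the surviving class is the image of a startdot when $k=0$ and of a Hochschild startdot when $k=1$), yields $\kb(|\beta|-4)$ and $\kb(|\beta|)$ respectively.

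The main obstacle will be the shape-$\sigma_i\sigma_j\sigma_i$ step: the pitchfork/trivalent vertex interacts with the Hochschild-cohomology basis of $B_sB_tB_s$ in a way that is genuinely the content of \cref{basisappend}/\cref{hhsts}, and it is here that the three-strand argument had to diagonalise matrices by hand. Two points need care: that the image vectors not coming from the $E_J$'s really do identify the surviving generators across different singletons $J$, so that for $k=1$ one gets a single $\kb$ and not one per $J$ (this mirrors the identification $\overline{\bhunit(-3)}\equiv\overline{\rhunit(-3)}$ in the $A=1$ case of \cref{theorem:hhhpos2}); and, dually, that for $k\geq 2$ no $T_J$-class escapes. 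Beyond the larger number of blocks and colour pairs, every local situation reduces to one already treated in \cref{theorem:hhhpos2}, so no genuinely new phenomenon occurs --- which is exactly what the theorem asserts.
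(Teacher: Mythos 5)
Your proposal is correct and follows essentially the same route as the paper's proof: read the group as $\ker\hh^k(d_{|\beta|-2})/\im\hh^k(d_{|\beta|-3})$ with the kernel decomposed as in \cref{kerprop}, enumerate the image generators by the shape of the length-three subword, cancel $\hh^\bullet(d_{|\beta|-3})(E_J)$, $N_J$, $T_J$ and the isolated startdot/Hochschild-startdot pieces, and identify the single surviving copy of $R/(\alpha_1,\ldots,\alpha_{n-1})$ in degrees $k=0,1$ (with the same $Q$-shifts). One small slip: for commuting $i,j$ the decomposition is $B_iB_jB_i\cong B_iB_j(-1)\oplus B_iB_j(1)$, not $B_i(-1)\otimes B_j$; the paper sidesteps this by directly recomputing the $m_{su}=2$ differential matrices and extracting the vectors of \cref{suimage}, which is all your argument actually requires to kill $T_J$.
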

\begin{proof}
$(1)$ Clearly $\hh^\bullet(d_{|\beta|-3})(E_J)$ cancels. \\
$(2)$ From the explicit matrices of \cref{theorem:hhhpos2}, it follows that for $s=\sigma_i ,t=\sigma_{i+1}$, $ 1\le i\le n-1 $, the image of the differential $\hh^\bullet(d_{|\beta|-3})$  contains the following vectors below ($\vec{w_4}$ in $\overline{\hhh}^{A=0}$,  $ \vec{w_6}, \vec{w_{14}} $ in $\overline{\hhh}^{A=1}$,$\vec{w_3}$ in $\overline{\hhh}^{A=2}$ )
 \[ d_{tst} \paren{\bpitchcup}=\runit\bunit+\bunit\runit+\bunit, \quad d_{sts}\paren{\rhpitchcupout }=\rhunit+\rhunit\bunit +\bunit \rhunit, \quad d_{tst}(\bhpitchcupout)=\runit \bhunit +\bhunit\runit+\bhunit, \quad d_{sts}\paren{\rhhpitchcupout}=\rhunit\bhunit+\bhunit\rhunit \]
 where the subscript is used to denote where the basis element comes from, e.g. $d_{sts}:C_{sts}\to \ldots$. As exterior boxes can be ``taken out" of $d_{sts}$ this means that $N_J$ cancels.\\
 $(3)$ As shown in \cref{basisappend}, the bases for $\hh^i(B)$ appearing in the last 3 terms of $R^\bullet_\beta$ don't change even if $m_{su}=2$. However the differentials might, as 2$-$color Ext Soergel relations change when $m_{su}=2$. Specifically, the pitchfork differential applied to the ``cup basis vectors" are now 0. We record the analogues of the matrices in the proof of \cref{theorem:hhhpos2} below. \\ 
$\bullet$ Differentials for $\overline{\hhh}^{A=0}$ when $m_{su}=2$:
    \[  \hh^0(d_{|\beta|-3})=\scalemath{0.95}{\begin{pNiceMatrix}[
  first-row,code-for-first-row=\scriptstyle,
  last-col,code-for-last-col=\scriptstyle,
]
        C_{sss}& C_{ssu} & C_{sus} & C_{sus} & C_{uss} & C_{suu} & C_{usu} & C_{usu} & C_{uus} & C_{uuu} & \\
        \alpha_s & \alpha_u & -\alpha_u & 0& \alpha_u & 0 & 0 & 0 & 0& 0  & C_{ss}\\
        0 & 0 & \alpha_s & 1 & 0 & 0 & \alpha_u & 1 & 0  & 0 & C_{su}\\
        0 & 0 & \alpha_s & 1 & 0 & 0 & \alpha_u & 1 & 0 & 0  & C_{us}\\
        0 & 0 & 0  & 0 & 0 & \alpha_s & -\alpha_s & 0 & \alpha_s & \alpha_u & C_{uu}
    \end{pNiceMatrix}}\]

    $ \bullet$ Differentials for $\overline{\hhh}^{A=1}$ when $m_{su}=2$: 
    \[ \hh^1(d_{|\beta|-3})=\scalemath{0.85}{\begin{pNiceMatrix}[
  first-row,code-for-first-row=\scriptstyle,
  last-col,code-for-last-col=\scriptstyle,
]
    C_{sss}&C_{sss} &C_{ssu} &C_{ssu} & C_{sus} & C_{sus} & C_{sus}& C_{sus} & C_{uss} & C_{uss} & C_{suu} & C_{suu} & C_{usu} & C_{usu} & C_{usu} & C_{usu} & C_{uus} & C_{uus} & C_{uuu} &C_{uuu} & \\
    \alpha_s & 0 & \alpha_u & 0 & -\alpha_u & 0 & 0 & 0 & 0 &\alpha_u & 0 & 0 & 0 & 0 & 0 & 0 & 0 &0 & 0 &0  & C_{ss}\\
    0 & \alpha_s & 0 & 1 & 0 & 0 &0 & -1 & 1 & 0 & 0 & 0 & 0 & 0 &0 &0 & 0 & 0 & 0 &0 & C_{ss} \\
    0 & 0 & 0 & 0 &\alpha_s & 1 & 0 & 0 & 0 & 0 & 0 & 0 & 0 & 0 & 1 & \alpha_u &0 &0 & 0 & 0  & C_{su}\\
    0 & 0 &0 &0 &0 &0 & 1 & \alpha_s &0 & 0 & 0 & 0 &\alpha_u & 1 & 0 & 0 &0 &0 &0 & 0  & C_{su}\\
    0 & 0 &0 &0 &0 &0 & 1 & \alpha_s & 0 & 0 &0 &0 &\alpha_u & 1 & 0 &0 &0 &0 &0 &0 & C_{us} \\
    0 & 0 &0 &0 &\alpha_s &1 & 0 & 0 & 0 & 0& 0 &0 &0 &0 & 1 & \alpha_u & 0 &0 &0 &0 & C_{us} \\
    0 & 0 &0 &0 &0 &0 & 0 & 0 & 0 & 0& 0 &\alpha_s & -\alpha_s &0 &0 & 0 & \alpha_s & 0 &\alpha_u &0 & C_{uu} \\
    0 & 0 &0 &0 &0 &0 & 0 & 0 & 0 & 0& 1 & 0 & 0 & 0 & 0 & -1 & 0 & 1 & 0 &\alpha_u & C_{uu}
    \end{pNiceMatrix}
    }   \]
$ \bullet$ Differentials for $\overline{\hhh}^{A=2}$ when $m_{su}=2$: 

\[  \hh^2(d_{|\beta|-3})=\begin{pNiceMatrix}[
  first-row,code-for-first-row=\scriptstyle,
  last-col,code-for-last-col=\scriptstyle,
]
C_{sss}& C_{ssu} & C_{sus} & C_{sus} & C_{uss} & C_{suu} & C_{usu} & C_{usu} & C_{uus} & C_{uuu} & \\
\alpha_s & 1 & 0 & -1 & 1 & 0 & 0 & 0 & 0 &0 & C_{ss} \\
    0 & 0 & 1 & \alpha_s & 0 & 0 & 1 & \alpha_u & 0 &0  & C_{su}\\
    0 & 0 & 1 & \alpha_s &0 & 0 &1 &\alpha_u & 0 & 0 & C_{us} \\
    0 & 0 & 0 & 0 & 0 &1 & 0 & -1 & 1 & \alpha_u &  C_{uu}
\end{pNiceMatrix}  \]
Thus when $s,u$ are far apart, the image of the differential $\hh^\bullet(d_{|\beta|-3})$  contains the following vectors below ($\vec{w_4}$ in $\overline{\hhh}^{A=0}$,  $ \vec{w_6}, \vec{w_{7}} $ in $\overline{\hhh}^{A=1}$,$\vec{w_3}$ in $\overline{\hhh}^{A=2}$ 
 \begin{equation}
 \label{suimage}
     d_{sus} \paren{\gpitchcup}=\runit\gunit+\gunit\runit, \quad d_{sus}\paren{\grhpitchcupout }=\rhunit\gunit +\gunit \rhunit, \quad d_{sus}(\ghpitchcupout)=\runit \ghunit +\ghunit\runit, \quad d_{sus}\paren{\grhhpitchcupout}=\rhunit\ghunit+\ghunit\rhunit 
 \end{equation} 
from which it follows that $T_J$ cancels.\\
$(4)$ Suppose that $|J|\ge 1$. Then for any $j\not\in J$ and any $k\in J$
\[ d_{jj k}\paren{\lunit{j} \lhunit{k} \scalemath{0.85}{\dboxed{\widehat{\alpha_J^{k}}^\vee}}} = \lunit{j} \scalemath{0.85}{\dboxed{\alpha_J^\vee}}\]
Otherwise we must have $J=\emptyset$ (which is all of $A=0$). From the matrix for $\overline{\hhh}^{A=0}$ in the proof of \cref{theorem:hhhpos2}, when $s,t$ are adjacent, the only other elements mapping to $\lunit{i}$ are of the form \vspace{-2ex}
$$d_{sst}\paren{ \lunit{s} \lunit{t}} = \alpha_t \lunit{s},\qquad  d_{stt}\paren{ \lunit{s} \lunit{t}} = \alpha_s \lunit{t}$$
The same is true when $t$ is replaced by $u$ where $s,u$ are not adjacent. But then as we range over all $s,t$ adjacent, the corresponding image encompasses the cases where $s,u$ are not adjacent. Thus the cohomology here at $A=0$, $T=|\beta|-2$ is $\runit (|\beta|-3) \, R/(\alpha_1, \ldots, \alpha_{n-1})\cong \kb(|\beta|-4)$. \\

$(5)$ Suppose that $|J|\ge 2$. Then for any $i_0\in J$, we can find another $j_0\in J$ s.t. $i_0\neq j_0$. Then 
\[ d_{i_0 i_0 j_0}\paren{\lhunit{i_0} \lhunit{j_0} \scalemath{0.85}{\dboxed{\widehat{\alpha_J^{i_0, j_0}}^\vee}}} = \lhunit{i_0} \scalemath{0.85}{\dboxed{\widehat{\alpha_J^{i_0}}^\vee}}\]
Otherwise suppose $|J|=1$,  while for $s,t$ adjacent
\[d_{sts}\paren{ \rhpitchcupin } -d_{tst}\paren{ \bhpitchcupout }= \rhunit+ \runit\bhunit+\bhunit\runit-\paren{\runit \bhunit+\bhunit \runit +\bhunit}=\rhunit-\bhunit \]
and thus all the $\lhunit{i}$ for $1\le i \le n-1$ are equivalent in cohomology. From the matrix for $\overline{\hhh}^{A=1}$ in the proof of \cref{theorem:hhhpos2}, when $s,t$ are adjacent, the only other elements mapping to $\lhunit{i}$ are of the form \vspace{-2ex}
$$d_{sst}\paren{ \lhunit{s} \lunit{t}} = \alpha_t \lhunit{s},\qquad  d_{stt}\paren{ \lunit{s} \lhunit{t}} = \alpha_s \lhunit{t}$$
The same is true when $t$ is replaced by $u$ where $s,u$ are not adjacent. But then as we range over all $s,t$ adjacent the image encompasses the cases where $s,u$ are not adjacent. From $(4)$, all the $\lunit{j}  \scalemath{0.85}{\dboxed{\alpha_k}}$ terms die in cohomology. Thus the cohomology here at $A=1$, $T=|\beta|-2$ is $\rhunit (|\beta|-3)\, R/(\alpha_1, \ldots, \alpha_{n-1})\cong \kb(|\beta|)$.
\end{proof}

\subsection{The Other Cases}

As mentioned at the start of \cref{gensect} we may not necessarily have the subexpressions $susu$ or $usus$ when $s,u$ are not adjacent. Suppose for a fixed $s,u$ we only have the subexpression $suus$ or $ussu$. $\ker \hh^\bullet(d_{|\beta|-2})$ is not affected, as the conditions in \cref{fjker} are still satisfied. Notice we still have the subexpression $sus$ or $usu$. The only time we need the image from far away subexpressions in the proof of \cref{prebeta2theorem} is in step $(3)$ and \cref{suimage} tells us we just need the vectors coming from $C_{sus}$ which also equals the vectors coming from $C_{usu}$. Thus the cohomology at $T=|\beta|-2$ is still the same.  \\

Finally suppose for a fixed $s,u$ we only have the subexpressions $ssuu$ (equivalently $uuss$). Then the only change in \cref{kerprop} is that we no longer have $D(s)D(u)+D(u)D(s)$ appearing in the $T_J$ part. As for the image of $\hh^k(d_{|\beta|-3})$ there is now no subexpression $sus$ or $usu$ and the rest of the image vectors are not needed in the proof of \cref{prebeta2theorem} and thus the cohomology is the same as before.

\begin{example}
\label{ex3}
    Consider \cref{ex2} where $n=4$ and $J=\set{1,2}\subset \set{1,2,3}$. Suppose that between $1$ and $3$, only the subexpression $1133$ appears. Then the differential from $F_J$ to $G_J$ of the corresponding $J$ block of the matrix $\hh^k(d_{|\beta|-2})$ (modulo $Q$ grading shifts and $R$) will be
\[ \scalemath{0.9}{\lhunit{2}\lhunit{1}\oplus \lhunit{1}\lhunit{2} \oplus \lhunit{1}\lunit{3} \scalemath{0.85}{\dboxed{\alpha_2^\vee}} \oplus \lunit{3}\lhunit{2} \scalemath{0.85}{\dboxed{\alpha_1^\vee}} \oplus \lhunit{2}\lunit{3} \scalemath{0.85}{\dboxed{\alpha_1^\vee}} \ \raisebox{-3.5ex}{$\underset{\hh^2(d_{|\beta|-2})|_{F_J}}{\xrightarrow{\begin{pmatrix}
    1 & -1  & -\alpha_3 & 0 & 0 \\
    -1 & 1  & 0 & \alpha_3 & -\alpha_3 \\
    0 & 0  & 1 & -1 & 1
\end{pmatrix}}}$} \lhunit{1} \scalemath{0.85}{\dboxed{\alpha_2^\vee}}\oplus \lhunit{2} \scalemath{0.85}{\dboxed{\alpha_1^\vee}} \oplus \lunit{3} \scalemath{0.85}{\dboxed{\alpha_1^\vee\alpha_2^\vee}}} \]  
Applying the change of basis 
\[ P=\scalemath{0.9}{\begin{pmatrix}
    1 & 0 & 0 &0 & 0  \\
    1 & 0 & 1 & 0 & 0  \\
    0 & 0 & 0 & 1 & 0 \\ 
    0 & 1 & 0  & 0 & 0 \\
    0 & 1 & 0  & 0 & 1 \\
\end{pmatrix}} \]
we obtain the kernel from \cref{ex2} except without $\lunit{3}\lhunit{1} \scalemath{0.85}{\dboxed{\alpha_2^\vee}} + \lhunit{1}\lunit{3} \scalemath{0.85}{\dboxed{\alpha_2^\vee}}$.
\end{example}

To summarize, we have obtained the following theorem.

\begin{maintheorem}
\label{mainpostheorem2}
    Let $\beta\in B_n^+$ be \un{any} positive braid on $n$ strands s.t. $\beta$ contains either  $\sigma_i \sigma_{i+1} \sigma_i \sigma_{i+1}$ or $ \sigma_{i+1} \sigma_i \sigma_{i+1} \sigma_i$ as a subexpression for all $1\le i \le n-1$. Then as $R-$modules
\[ \paren{\overline{\hhh}(\beta)^{A=k}}^{T=|\beta|-2}= 
\begin{cases}
 \kb(|\beta|-4) & k=0 \\
 \kb(|\beta|) & k=1 \\
  0 & k\ge2
\end{cases}
  \]
\end{maintheorem}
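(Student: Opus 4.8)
The plan is to assemble this statement from \cref{prebeta2theorem} together with the analysis of the remaining configurations, reducing every case to explicit $3$-strand data. First, note that the hypothesis forces each $\sigma_i$ ($1\le i\le n-1$) to occur in $\beta$, so \cref{1lowerlem} applies to every non-adjacent pair $s=\sigma_i,u=\sigma_j$ ($|i-j|>1$): $\beta$ must contain one of $ssuu,susu,suus,uuss,usus,ussu$ as a subexpression, while for adjacent pairs the hypothesis gives $susu$ or $usus$ outright. In particular $\beta$ always contains $\sigma_i\sigma_j$ and $\sigma_j\sigma_i$ for all $i\ne j$, so the hypotheses of \cref{fjker} hold throughout.

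Next I would restrict to the last three terms of the reduced Rouquier complex $R_\beta^\bullet$ of \cite{Mal24}, apply $\hh^k(-)$ termwise using the diagrammatic bases of \cref{basisappend}, and compute cohomology at cohomological degree $|\beta|-2$. By \cref{fjgjlem2} and \cref{ejlem} the three relevant terms of $\hh^k(R_\beta^\bullet)$ split as a direct sum over subsets $J\subseteq[n-1]$ with $|J|=k$ (plus the $\hh^k(C_{ii})$ summands, which all lie in the kernel of $\hh^k(d_{|\beta|-2})$), and \cref{kerprop} pins down $\ker\hh^k(d_{|\beta|-2})=\bigoplus_{|J|=k}\mathscr C_J$, each $\mathscr C_J$ built from $\hh^\bullet(d_{|\beta|-3})(E_J)$, the symmetrized pieces $N_J,T_J$, and the $\lunit{j}$/$\lhunit{i_0}$ contributions coming from the $C_{ii}$'s.

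The substantive step, carried out in \cref{prebeta2theorem}, is then computing $\im\hh^k(d_{|\beta|-3})$ and cancelling: $\hh^\bullet(d_{|\beta|-3})(E_J)$ is killed tautologically; $N_J$ is killed by the distinguished image vectors $d_{tst}(\bpitchcup)$, $d_{sts}(\rhpitchcupout)$, $d_{tst}(\bhpitchcupout)$, $d_{sts}(\rhhpitchcupout)$ coming from adjacent $3$-strand blocks (matrices as in the proof of \cref{theorem:hhhpos2}); $T_J$ is killed by the $m_{su}=2$ analogues of \cref{suimage}; the $\lunit{j}$ with $j\notin J$ die via $d_{jjk}$ once $|J|\ge1$, leaving only the $J=\emptyset$ block, which collapses to $\runit\,R(|\beta|-3)/(\alpha_1,\dots,\alpha_{n-1})\cong\kb(|\beta|-4)$; and the $\lhunit{i_0}$ with $i_0\in J$ die via $d_{i_0 i_0 j_0}$ once $|J|\ge2$, leaving only the $|J|=1$ blocks, all identified in cohomology, giving $\rhunit\,R(|\beta|-3)/(\alpha_1,\dots,\alpha_{n-1})\cong\kb(|\beta|)$.

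Finally I would dispose of the non-generic pairs exactly as in the subsection preceding the statement: if for some non-adjacent $s,u$ only $suus$ or $ussu$ appears, $\ker\hh^\bullet(d_{|\beta|-2})$ is unchanged and $sus$ (or $usu$) still supplies the image vectors of \cref{suimage}, so the cohomology is unchanged; if only $ssuu$ (equivalently $uuss$) appears, one loses one symmetrized generator of $T_J$ but simultaneously loses precisely the image vector that would have cancelled it (cf.\ \cref{ex3}), so the quotient is again the same. Combining the generic case with these gives the theorem. I expect the main obstacle to be the bookkeeping inside \cref{prebeta2theorem} between the $J$-block decomposition of the kernel and the scattered images of the various $C_{abc}$ terms — in particular confirming that every symmetrized kernel element is hit, which is where the change of basis in \cref{kerprop} is arranged to produce cancellation and where one must check that only a bounded, $3$-strand's worth of Ext--Soergel relations (including the $m_{su}=2$ degeneration) is ever required.
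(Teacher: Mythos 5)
Your proposal is correct and follows essentially the same route as the paper: reduce the generic case (all pairs interleaved) to \cref{prebeta2theorem} via the $J$-block kernel decomposition of \cref{kerprop}, then dispose of the degenerate non-adjacent configurations exactly as in the subsection preceding the theorem. One small inaccuracy: your opening claim that $\beta$ always contains both $\sigma_i\sigma_j$ and $\sigma_j\sigma_i$ (so that \cref{fjker} applies throughout) fails when only $ssuu$ occurs for some non-adjacent pair, since then $us$ is not a subexpression and the kernel genuinely loses the generator $D(s)D(u)+D(u)D(s)$ from $T_J$ --- but your final paragraph already treats this case correctly (the lost generator and the lost image vector cancel), so the argument is unaffected.
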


\subsection{Negative Braids}

Like before, we will apply \cref{lem:dualcomplex} to compute for negative braids. However, unlike for positive braids where the cohomology in the $n-$strand case turned out to be exactly as in the $3-$strand case, the cohomology for $n\ge 4$ at $T=-|\alpha|+2$ will be different than for $n=3$. Informally, this is because $E_J$ in \cref{ejlem} only appears in $R^{|\beta|-3}_\beta$ when $n\ge 4$ and $E_J$ is what makes computations Koszul. In the proof below please try to think in the ``transpose."

\begin{maintheorem}
\label{mainnegtheorem2}
    Let $\alpha\in B_n^-$ be \un{any} negative braid on $n\ge 4$ strands s.t. $\alpha$ contains either  $\paren{\sigma_i \sigma_{i+1} \sigma_i \sigma_{i+1}}^{-1}$ or $ \paren{\sigma_{i+1} \sigma_i \sigma_{i+1} \sigma_i}^{-1}$ as a subexpression for all $1\le i \le n-1$. Then as $R-$modules
\[ \paren{\overline{\hhh}(\alpha)^{A=k}}^{T=-|\alpha|+2}= 0 \quad \forall k\ge 0
  \]
\end{maintheorem}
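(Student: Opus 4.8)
The plan is to reduce everything to the positive‑braid results \cref{mainpostheorem} and \cref{mainpostheorem2} via the duality isomorphism \cref{lem:dualcomplex}. Put $\beta=\alpha^\vee\in B_n^+$, so that $|\beta|=|\alpha|$ and $\beta$ again contains $\sigma_i\sigma_{i+1}\sigma_i\sigma_{i+1}$ or $\sigma_{i+1}\sigma_i\sigma_{i+1}\sigma_i$ as a subexpression for every $1\le i\le n-2$; in particular both \cref{mainpostheorem} and \cref{mainpostheorem2} apply to $\beta$. Set $C^\bullet:=\hh^{\,n-1-k}(F^\bullet_\beta)$, a bounded complex of finitely generated free $R$-modules concentrated in cohomological degrees $0,\dots,|\beta|$, and record the inputs we will use:
\[ H^{q}(C^\bullet)=0 \text{ for } q>|\beta|, \qquad H^{|\beta|-1}(C^\bullet)=0, \qquad H^{|\beta|}(C^\bullet)=\begin{cases}\kb(|\beta|) & k=n-1\\ 0 & k\le n-2,\end{cases} \]
while $H^{|\beta|-2}(C^\bullet)=\overline{\hhh}(\beta)^{A=n-1-k,\,T=|\beta|-2}$ is, in every case, a finite‑dimensional $\kb$-vector space (namely $\kb(|\beta|-4)$, $\kb(|\beta|)$, or $0$, by \cref{mainpostheorem2}). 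By \cref{lem:dualcomplex}, $\overline{\hhh}(\alpha)^{A=k}$ at $T=-|\alpha|+2$ is, up to the overall shift $(2(n-1))$, the cohomology in degree $-(|\beta|-2)$ of $\un{\hom}_R^\bullet(C^\bullet,R)$; since $\hom_R(-,R)$ preserves homotopy equivalences one may freely replace $F^\bullet_\beta$ by the smaller model $R^\bullet_\beta$ if convenient.

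Now apply the universal‑coefficients (hyper‑$\ext$) spectral sequence attached to the bounded complex of free modules $C^\bullet$,
\[ E_2^{p,q}=\ext^p_R\!\big(H^{-q}(C^\bullet),R\big)\ \Longrightarrow\ H^{p+q}\big(\un{\hom}_R^\bullet(C^\bullet,R)\big). \]
The abutment in total degree $2-|\beta|$ is filtered with subquotients of the groups $\ext^p_R\!\big(H^{|\beta|-2+p}(C^\bullet),R\big)$, $p\ge 0$, so it suffices to see each of these vanishes. For $p\ge 3$ one has $H^{|\beta|-2+p}(C^\bullet)=0$ since $\overline{\hhh}(\beta)$ is supported in $T$-degrees $\le|\beta|$. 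For $p=1$, $\ext^1_R(H^{|\beta|-1}(C^\bullet),R)=\ext^1_R(0,R)=0$. For $p=0$, $\hom_R(H^{|\beta|-2}(C^\bullet),R)=0$ because $H^{|\beta|-2}(C^\bullet)$ is $\kb$-finite‑dimensional, hence a torsion $R$-module, while $R$ is a domain. For $p=2$ we must see $\ext^2_R(H^{|\beta|}(C^\bullet),R)=0$: if $k\le n-2$ this is $\ext^2_R(0,R)=0$, and if $k=n-1$ it is $\ext^2_R(\kb(|\beta|),R)$, which vanishes because over $R=\kb[\alpha_1,\dots,\alpha_{n-1}]$ one has $\ext^i_R(\kb,R)=0$ for all $i<n-1$ — and here $n-1\ge 3$. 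Thus every relevant $E_2$-term is zero, the abutment vanishes, and $\overline{\hhh}(\alpha)^{A=k,\,T=-|\alpha|+2}=0$ for every $k\ge 0$ (for $k\ge n$ this is automatic, as $\ext^k_{R^e}(R,-)=0$).

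I expect the $p=2$ slot to carry the real content, and to be where $n\ge 4$ is genuinely used: it is precisely the group that detects the generator $\kb(|\beta|)$ of $\overline{\hhh}(\beta)^{A=0,\,T=|\beta|}$, and when $n=3$ the ring $R$ has Krull dimension $2$, so $\ext^2_R(\kb,R)\neq 0$ and one recovers exactly the surviving class recorded in \cref{theorem:hhhneg2} — this is the source of the positive/negative asymmetry at $T$-degree $|\beta|-2$ flagged in the introduction. In writing this up the two points demanding care will be (i) pinning down the indexing of the spectral sequence together with the combined $Q$-grading shifts coming from \cref{lem:dualcomplex} and from $\ext^{\,\ge 1}_R(\kb,R)$, and (ii) should one wish to avoid spectral sequences altogether, dualizing the explicit block matrices of \cref{gensect} term by term and observing that the $E_J$-terms of \cref{ejlem}, which appear in $R^{|\beta|-3}_\beta$ only once $n\ge 4$, force the relevant four‑term piece to be a truncated Koszul complex whose transpose is again Koszul, so that its cohomology in degree $-(|\beta|-2)$ vanishes by \cref{koszulvanish} and Koszul self‑duality.
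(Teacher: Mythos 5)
Your proof is correct, but it takes a genuinely different route from the paper. The paper proves \cref{mainnegtheorem2} by explicitly transposing the block matrices of \cref{gensect}: it first kills the $\hh^k(C_{ii})$ contributions, then identifies the $J$-block of $\hh^k(d_{|\beta|-3})^T$ as containing (after a change of basis) the transpose of a Koszul differential $d_3(K_J)$ with a unit entry, and concludes by \cref{koszulvanish} and Koszul self-duality — essentially the second alternative you sketch at the end. Your main argument instead treats the positive-braid computations as a black box and runs the universal-coefficients spectral sequence $E_2^{p,q}=\ext^p_R(H^{-q}(C^\bullet),R)$ for the bounded complex of free modules $C^\bullet=\hh^{n-1-k}(F^\bullet_\beta)$; on the antidiagonal $p+q=2-|\beta|$ every $E_2$-term vanishes ($p=0$ because $\overline{\hhh}(\beta)^{T=|\beta|-2}$ is $R$-torsion by \cref{mainpostheorem2}, $p=1$ and $p\ge 3$ by \cref{mainpostheorem} and the support bound, and $p=2$ because $\ext^2_R(\kb,R)=0$ once $n-1\ge 3$), so the abutment vanishes regardless of differentials. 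This is shorter, isolates exactly where $n\ge 4$ enters, and — as you note — the same computation at $n=3$ reproduces the class $\kb(8-|\alpha|)$ of \cref{theorem:hhhneg2} from $\ext^2_{\kb[\alpha_s,\alpha_t]}(\kb(|\beta|),R)(4)$, giving a uniform conceptual explanation of the positive/negative asymmetry that the paper only observes empirically. What the paper's hands-on approach buys in exchange is an explicit description of kernels and images (and hence of generators) inside its diagrammatic framework, which is needed elsewhere; your argument crucially consumes the $R$-module (not just vector-space) structure of $\overline{\hhh}(\beta)$ established in \cref{mainpostheorem} and \cref{mainpostheorem2}, which is precisely the added value of those computations that the introduction advertises.
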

\begin{proof}
$(i)$ Set $\beta=\alpha^{-1}$. The assumption on the subexpressions implies that $d_{|\beta|-3}$ in $R^{\bullet}_\beta$ is of the form\footnote{we have omitted $Q$ grading shifts since cohomology turns out to be 0 anyways.}
\[\bigoplus_{\substack{i=1 \\ |i-j|=1 }}^{n-1}C_{iij}\oplus\bigoplus_{\substack{i=1 \\ |i-j|=1 }}^{n-1}C_{ijj}\oplus \paren{\text{ rest of }R^{|\beta|-3}_\beta }\underset{d_{|\beta|-3}}{\xrightarrow{\begin{pmatrix}
    \lzero{i} \lcounit{j} & \lcounit{i} \lzero{j}   & \ldots \\
    0 & 0 & \ldots
\end{pmatrix}}
}\bigoplus_{i=1}^{n-1}C_{ii}\oplus \paren{\text{ rest of }R^{|\beta|-2}_\beta }  \]
where we have only shown the differential on the $C_{iij}$ and $C_{ijj}$ parts. As a result, we see that
\[  d_{iij}\paren{ \lhalfunit{i} \lunit{j} \scalemath{0.85}{\dboxed{\alpha_{J}^\vee}}} = \alpha_j \lhalfunit{i} \scalemath{0.85}{\dboxed{\alpha_{J}^\vee}}\]
As $\displaystyle \hh^k\paren{\bigoplus_{i=1}^{n-1}C_{ii}  }=\bigoplus_{|J|=k} G_J$(see \cref{gjlem}), the above implies that $\hh^k\paren{\bigoplus_{i=1}^{n-1}C_{ii}  }\cap \ker \hh^k\paren{d_{|\beta|-3}}^T=\set{0}$. \\

$(ii)$ From $(i)$ we can remove all the $C_{ii}$ terms. Thus, from the proof of \cref{prebeta2theorem}, $\im \hh^k\paren{d_{|\beta|-3}}$ is now generated by elements of the form \cref{suimage} times exterior boxes (where $s,u$ can also be adjacent) and $\hh^k(d_{|\beta|-3})(E_J)$.

and as a result $\hh^k\paren{d_{|\beta|-3}}^T|_{\text{ rest of }R^{|\beta|-2}_\beta}$ is a block matrix, again indexed by $J\subseteq [n-1]$. Each block $J$ can be further decomposed into a block matrix which we will first illustrate with an example.

\begin{example}
\label{ex4}
    Let $n=4$ and $J=\set{1,2}\subset \set{1,2,3}$. Let red $=1$, blue $=2$, green $=3$. Suppose that between $1$ and $3$, only the subexpression $1133$ appears. Then $\im \hh^k(d_{|\beta|-3})$ in the $J$ block is of the form (ommitting $Q$ grading shifts and $R$) 
    \[ \rhhpitchcupout \oplus \bgrhpitchcupout \scalemath{0.85}{\dboxed{\alpha_{1}^\vee}} \oplus \rhunit\bhunit \gunit \underset{\hh^2(d_{|\beta|-3})}{\xrightarrow{\begin{pNiceArray}{cc|c}[margin]
    1  & 0 & \alpha_3   \\
    0 & 1 & \alpha_1 \\
     0  & 0  & -\alpha_2 \\ \hline
     1 & 0 & 0 \\
     0 & 1 & 0
\end{pNiceArray}}} \rhunit \bhunit \oplus \bhunit \gunit \scalemath{0.85}{\dboxed{\alpha_{1}^\vee}} \oplus \rhunit \gunit \scalemath{0.85}{\dboxed{\alpha_{2}^\vee}}\oplus \bhunit\rhunit \oplus \gunit\bhunit  \scalemath{0.85}{\dboxed{\alpha_{1}^\vee}} \]
\end{example}
In general for each pair $i<j$ define $P(i,j)=ij$ if $ij$ appears as a subexpression of $\beta$ and $ji$ otherwise. Recall that if $i$ and $j$ are non-adjacent, we might only have one of $ij$ or $ji$ appearing in $\beta$. Let $\ell$ denote the number of such non-adjacent pairs where only one ordering occurs and $N=\binom{n-1}{2}-\ell$. Then the $J$ block of $\hh^k(d_{|\beta|-3})$ is of the form
\[\begin{pNiceArray}{ccc|c}[margin]
   \Block{2-3}<\LARGE>{I_{N}}& & & \Block{3-1}{d_3(K_J)}  \\
    & & & \\
    \Block{1-3}{0_{\ell\times N}} &  & &  \\ \hline
     \Block{2-3}<\LARGE>{I_{N} } &&& \Block{2-1}<\LARGE>{\vec{0}} \\
      &  & &  \\
\end{pNiceArray}\]

\begin{itemize}
    \item where $d_3(K_J)$ be the differential $d_3$ in the Koszul complex $K_J=K_\bullet(\set{\alpha_i }_{i\not\in J}, \overbrace{ 1, \ldots, 1}^{|J| \ many})$.
    \item The first $N+\ell=\binom{n}{2}$ rows correspond to the basis vectors $\lhalfunit{} \lhalfunit{}$ with labelling $P(i,j)$. The last $N$ rows are again the basis vectors $\lhalfunit{} \lhalfunit{}$ but with labelling the flip of $P(i,j)$, which appears as both $ij$ and $ji$ are subexpressions of $\beta$.
    \item The first $N$ columns correspond to the cups from \cref{suimage}. The last $\binom{n-1}{3}$ columns correspond to $\Lambda^3K_J$ where we have used \cref{any3lem} to ensure the image under lands in the first $\binom{n}{2}$ rows.
\end{itemize}
As before \cref{koszulvanish} will imply $d_3(K_J)^T$ is exact, and this determines the rest of the kernel of $\hh^k(d_{|\beta|-3})^T$ in the $J$ block. It is then clear that this is the image of $\hh^k(d_{|\beta|-2})^T$, see \cref{rbetaeq2}. For example, transposing the matrix in \cref{ex3} will give the kernel in \cref{ex4}.
\end{proof}

\appendix
\begin{appendices}

\section{Bases for Hochschild Cohomology}
\label{basisappend}

\subsection{Geometric Realization of Rank $2$}
Here $R=\kb[\alpha_s, \alpha_t]$. $s$ will be color red and $t$ will be color blue. Let $m_{st}$ denote the order of $st$.

\begin{lemma}
\label{hhbslem}
\begin{align*}
\hh^{0}(B_s)&=  \runit \, R=R(-1)  & \hh^{0}(B_t)&=  \bunit \, R=R(-1)  \\
\hh^{1}(B_s)&=  \rhunit  \, R\oplus \runit\ \raisebox{-1ex}{ \dboxed{\scalemath{0.9}{\alpha_t^\vee}}}  \, R =R(3)\oplus R(1) & \hh^{1}(B_t)&=  \bhunit  \, R\oplus \bunit\ \raisebox{-1ex}{ \dboxed{\scalemath{0.9}{\alpha_s^\vee}}}  \, R =R(3)\oplus R(1) \\
\hh^{2}(B_s)&=  \rhunit\ \raisebox{-1ex}{ \dboxed{\scalemath{0.9}{\alpha_t^\vee}}}  \, R =R(5) & \hh^{2}(B_t)&=  \bhunit\ \raisebox{-1ex}{ \dboxed{\scalemath{0.9}{\alpha_s^\vee}}}  \, R =R(5)
\end{align*}
\end{lemma}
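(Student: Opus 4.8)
The plan is to compute $\hh^\bullet(B_s)=\ext^\bullet_{R^e}(R,B_s)$ (with $R^e=R\otimes_\kb R$) directly from the Koszul resolution of $R$; the statements for $B_t$ then follow by interchanging the two colors. Since $R=\kb[\alpha_s,\alpha_t]$ is a polynomial ring, $x:=\alpha_s\otimes 1-1\otimes\alpha_s$ and $y:=\alpha_t\otimes 1-1\otimes\alpha_t$ form a regular sequence in $R^e$ with $R^e/(x,y)=R$, so the Koszul complex on $(x,y)$ is a length $2$ free resolution of $R$ over $R^e$. Applying $\hom_{R^e}(-,B_s)$ identifies $\hh^\bullet(B_s)$ with the cohomology of
\[ B_s\xrightarrow{\,d^0\,}\paren{B_s\otimes\alpha_s^\vee}\oplus\paren{B_s\otimes\alpha_t^\vee}\xrightarrow{\,d^1\,}B_s\otimes\paren{\alpha_s^\vee\wedge\alpha_t^\vee}, \]
where $\alpha_s^\vee,\alpha_t^\vee$ are the exterior generators (bidegree $(1,-2)$), $d^0(m)=[\alpha_s,m]\otimes\alpha_s^\vee+[\alpha_t,m]\otimes\alpha_t^\vee$, $d^1(m_s\otimes\alpha_s^\vee+m_t\otimes\alpha_t^\vee)=\paren{[\alpha_s,m_t]-[\alpha_t,m_s]}\otimes\alpha_s^\vee\wedge\alpha_t^\vee$, and $[\alpha,m]:=\alpha m-m\alpha$ denotes the difference of the left and right $R$-actions on $B_s$.

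First I would fix coordinates on $B_s=R\otimes_{R^s}R(1)$. Take the left $R$-basis $e_0:=1\otimes 1$ (in $Q$-degree $-1$) and $e_1:=d_s=\frac12(\alpha_s\otimes 1+1\otimes\alpha_s)$ (in $Q$-degree $1$); this is an invertible triangular change of basis from $\{\,1\otimes 1,\ 1\otimes\alpha_s\,\}$ and is simultaneously a right $R$-basis. Write $\alpha_t=p+q\alpha_s$ with $p\in R^s$ and $q\in\kb$ (so $q=0$ exactly when $m_{st}=2$). Using that $p$ and $\alpha_s^2$ lie in $R^s$, a one line computation shows that $d_s$ is central in $B_s$; hence $[\alpha_s,e_1]=[\alpha_t,e_1]=0$, while $[\alpha_s,e_0]=2v$ and $[\alpha_t,e_0]=2qv$ where $v:=\alpha_s e_0-e_1$. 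Since $B_s$ is free of rank $2$ over $R$ on $\{e_0,e_1\}$ and $v$ corresponds to $(\alpha_s,-1)$, we have $rv=0\Rightarrow r=0$, so the whole complex is controlled by multiplication by $v$.

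Now I would read off the three cohomology groups. In homological degree $0$, $\ker d^0=\{\,a e_0+b e_1:2av=2qav=0\,\}=R e_1\cong R(-1)$, generated by $e_1=d_s$, which represents the Startdot class $\runit$. In homological degree $2$, the image of $d^1$ inside $B_s$ is already $Rv$ (from the $[\alpha_s,-]$ column alone), and $B_s/Rv\cong R e_0\cong R(1)$; tensoring with $\Lambda^2$ shifts $Q$ by $4$, giving $\hh^2(B_s)\cong R(5)$, with generator $\rhunit\dboxed{\alpha_t^\vee}$. In homological degree $1$, $\ker d^1$ consists of the pairs $(m_s,m_t)$ whose $e_0$-coefficients $a_s,a_t$ satisfy $a_t=q a_s$; it is free with $R$-basis $\xi_1:=e_0\otimes(\alpha_s^\vee+q\alpha_t^\vee)$ (in $Q$-degree $-3$), $\xi_2:=e_1\otimes\alpha_s^\vee$ and $\xi_3:=e_1\otimes\alpha_t^\vee$ (in $Q$-degree $-1$), whereas $\im d^0=R\cdot(\alpha_s\xi_1-\xi_2-q\xi_3)$ is free of rank one with the unit coefficient $-1$ on $\xi_2$. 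Eliminating $\xi_2$ gives $\hh^1(B_s)\cong R\xi_1\oplus R\xi_3\cong R(3)\oplus R(1)$, matching $\rhunit R\oplus\runit\dboxed{\alpha_t^\vee}R$ once one checks that $\xi_1$ spans the same line (modulo $\im d^0$) as the Hochschild Startdot class $\rhunit$. Repeating the computation with $s$ and $t$ interchanged (so $\alpha_s^\vee\leftrightarrow\alpha_t^\vee$, $d_s\leftrightarrow d_t$) gives the second column.

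The body of this is elementary linear algebra over $R$; the two places needing care are (i) keeping track of the $Q$-degree shifts contributed by the exterior generators — the shift $(2)$ in homological degree $1$ and $(4)$ in degree $2$ — and (ii) matching the explicit module generators $e_1$, $e_0\otimes(\alpha_s^\vee+q\alpha_t^\vee)$, $e_1\otimes\alpha_t^\vee$ and $e_0\otimes\alpha_s^\vee\wedge\alpha_t^\vee$ with the named diagrammatic morphisms $\runit$, $\rhunit$, $\runit\dboxed{\alpha_t^\vee}$ and $\rhunit\dboxed{\alpha_t^\vee}$. Step (ii) is the one I expect to be the real (if modest) obstacle, since it is where one must invoke the barbell, polynomial forcing and $1$-color cohomology relations of Ext--Soergel calculus rather than plain bimodule algebra; in particular it is these relations that absorb the $q\alpha_t^\vee$ tail of $\xi_1$. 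Finally, it is worth noting that the only dependence on $m_{st}$ anywhere above is the scalar $q$ in $[\alpha_t,e_0]=2qv$, and it cancels from every kernel and cokernel, so the stated answer holds uniformly whether $m_{st}=2$ or $m_{st}\ge 3$.
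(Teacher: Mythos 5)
Your computation is correct, and it takes a genuinely different route from the paper. The paper does not prove \cref{hhbslem} from first principles at all: it is quoted as an instance of the general diagrammatic basis theorems of \cite{Li22} (the appendix only works out the harder case $\hh^\bullet(B_tB_sB_t)$ in detail, via \cite[Theorem 7.6]{Li22}, light leaves, and the duality $\Db$). You instead resolve $R$ over $R^e$ by the Koszul complex on the regular sequence $\alpha_s\otimes 1-1\otimes\alpha_s,\ \alpha_t\otimes 1-1\otimes\alpha_t$ and compute $\ext^\bullet_{R^e}(R,B_s)$ directly as the cohomology of $B_s\to B_s^{\oplus 2}\to B_s$ with commutator differentials. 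Your linear algebra checks out: $d_s$ is central because $\alpha_s^2$ and the $s$-invariant part of $\alpha_t$ are central in $R\otimes_{R^s}R$, the element $v=\alpha_s e_0-e_1$ is a non-zerodivisor coordinate since its $e_1$-component is a unit, and the resulting kernels and cokernels give exactly $R(-1)$, $R(3)\oplus R(1)$, $R(5)$ with the correct $Q$-shifts after accounting for the degree $-2$ of each $\alpha^\vee$. What your approach buys is a self-contained, elementary verification independent of the light-leaves machinery, and it makes transparent the uniformity in $m_{st}$ (the scalar $q$ cancels everywhere). What it does not quite deliver is the lemma \emph{as stated}, namely that the named diagrammatic morphisms $\runit$, $\rhunit$, $\runit\,\raisebox{-1ex}{\dboxed{\scalemath{0.9}{\alpha_t^\vee}}}$, $\rhunit\,\raisebox{-1ex}{\dboxed{\scalemath{0.9}{\alpha_t^\vee}}}$ represent your Koszul classes $e_1$, $\xi_1$, $\xi_3$, $e_0\otimes\alpha_s^\vee\wedge\alpha_t^\vee$; you correctly flag this as step (ii). That identification is essentially definitional once one unwinds how the Hochschild dot of \cite{Li22}/\cite{M} is defined as a Koszul cocycle, but it is precisely the content the paper outsources to \cite{Li22}, so to make your proof a full replacement you would need to pin down that dictionary (in particular that the class of $\xi_1=e_0\otimes(\alpha_s^\vee+q\alpha_t^\vee)$, not $e_0\otimes\alpha_s^\vee$ alone, is the Hochschild startdot) rather than leave it as an expected obstacle.
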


\begin{lemma}
\begin{align*}
\hh^{0}(R)&=  \raisebox{-1ex}{ \dboxed{\scalemath{0.9}{1}}} R=R &\hh^{0}(B_sB_t)&=  \runit \bunit \, R =R(-2) \\
\hh^{1}(R)&=  \raisebox{-1ex}{ \dboxed{\scalemath{0.9}{\alpha_s^\vee}}} R \oplus  \raisebox{-1ex}{ \dboxed{\scalemath{0.9}{\alpha_t^\vee}}} R =R(2)\oplus R(2) &\hh^{1}(B_sB_t)&=  \rhunit \bunit \, R\oplus  \runit \bhunit \, R=R(2)\oplus R(2)  \\
\hh^{2}(R)&=  \raisebox{-1ex}{ \dboxed{\scalemath{0.9}{\alpha_s^\vee}}}\raisebox{-1ex}{ \dboxed{\scalemath{0.9}{\alpha_t^\vee}}} \, R=R(4)&\hh^{2}(B_sB_t)&=  \rhunit \bhunit \, R=R(6)
\end{align*}
\end{lemma}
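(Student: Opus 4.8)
The plan is to compute both Hochschild cohomologies by the mechanism already used in \cref{hhbslem}, namely by resolving $R$ over $R^e$ by a Koszul complex. Write $R^e=R\otimes_{\kb}\bar R$ with $\bar R=\kb[\bar\alpha_s,\bar\alpha_t]$ a second copy of the polynomial ring, and set $\rho_s=\alpha_s\otimes 1-1\otimes\alpha_s$, $\rho_t=\alpha_t\otimes 1-1\otimes\alpha_t$. These form a regular sequence in the four-variable polynomial ring $R^e$ and generate the kernel of the multiplication map $R^e\twoheadrightarrow R$, so $K_\bullet(\rho_s,\rho_t)$ is a graded free $R^e$-resolution of $R$, with $\rho_s,\rho_t$ in $Q$-degree $2$. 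Applying $\hom_{R^e}(-,M)$ then presents $\hh^\bullet(M)$, for any $R$-bimodule $M$, as the cohomology of a three-term complex
\[ M \xrightarrow{\ \delta^0\ } M(2)\oplus M(2) \xrightarrow{\ \delta^1\ } M(4),\qquad \delta^0(m)=\bigl(\alpha_s m-m\alpha_s,\ \alpha_t m-m\alpha_t\bigr),\qquad \delta^1(m_1,m_2)=(\alpha_t m_1-m_1\alpha_t)-(\alpha_s m_2-m_2\alpha_s), \]
where the two dual Koszul generators get identified with the exterior boxes $\dboxed{\alpha_s^\vee}$, $\dboxed{\alpha_t^\vee}$, which is how they enter the answer.

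For $M=R$ there is nothing to do: each operator $\alpha_x\cdot(-)-(-)\cdot\alpha_x$ vanishes, so $\delta^0=\delta^1=0$ and one reads off $\hh^0(R)=R=\dboxed{1}R$, $\hh^1(R)=R(2)^{\oplus2}=\dboxed{\alpha_s^\vee}R\oplus\dboxed{\alpha_t^\vee}R$, $\hh^2(R)=R(4)=\dboxed{\alpha_s^\vee}\dboxed{\alpha_t^\vee}R$ — the familiar description of the Hochschild cohomology of a polynomial ring as an exterior algebra — with the $A$-grading being the cohomological grading.

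For $M=B_sB_t$ the plan is to make $\delta^0,\delta^1$ completely explicit. Using $R=R^s\oplus R^s\alpha_s$ and the analogous decomposition over $R^t$, the bimodule $B_sB_t=B_s\otimes_R B_t$ is free of rank $4$ as a left $R$-module on $\{u_i\otimes v_j\}_{i,j\in\{0,1\}}$ with $u_0=1\otimes_s1$, $u_1=1\otimes_s\alpha_s$, $v_0=1\otimes_t1$, $v_1=1\otimes_t\alpha_t$. I would record the right actions of $\alpha_s$ and $\alpha_t$ on this basis — using $\alpha_s^2\in R^s$, $\alpha_t^2\in R^t$, $u_0\alpha_s=u_1$, and the expansions of $\alpha_t$ (resp. $\alpha_s$) in the relevant invariant-module bases, which is the only place the value of $m_{st}$ intervenes — turning $\alpha_s\cdot(-)-(-)\cdot\alpha_s$ and $\alpha_t\cdot(-)-(-)\cdot\alpha_t$ into explicit $4\times4$ matrices over $R$. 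Then $\hh^0(B_sB_t)=\ker\delta^0$, which I expect to be the rank-one free submodule spanned by the symmetrizing element $\runit\bunit$ (this can be pinned down independently: $\hh^0(B_sB_t)=\hom_{R^e}(R,B_sB_t)$ is the ``symmetric part'' of the indecomposable $B_{st}$, of graded rank $1$), giving $R(-2)$; next $\hh^1(B_sB_t)=\ker\delta^1/\operatorname{im}\delta^0$, which I expect to be free of rank $2$ represented by $\rhunit\bunit$ and $\runit\bhunit$; and $\hh^2(B_sB_t)=\operatorname{coker}\delta^1$, free of rank $1$ represented by $\rhunit\bhunit$. The shifts $R(-2)$, $R(2)^{\oplus2}$, $R(6)$ come out of tracking the $Q$-degrees of the $u_i\otimes v_j$ against the $-2$ carried by each Koszul dual generator.

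The hard part will be the $B_sB_t$ bimodule bookkeeping: keeping the left and right $R$-actions straight across $\otimes_R$ and through the $R^s$- and $R^t$-relations, and doing this uniformly in $m_{st}$ so that the one stated answer covers $m_{st}=2$ and $m_{st}=3$ at once. A second point requiring care is checking that the surviving cohomology classes really are the stated diagrams: each is easily seen to be a cocycle of the right bidegree, but one must also see it generates — e.g. that the other naive cocycles (those involving a single exterior box) either lie in $\operatorname{im}\delta^0$ or, via the one-color cohomology relation $\runit\,\dboxed{\alpha_s^\vee}=\rhunit\,\boxed{\alpha_s}$, are already $R$-multiples of the listed generators — and, as a consistency check, that the resulting graded ranks agree with what the categorified HOMFLY trace predicts.
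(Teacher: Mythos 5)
Your proposal is correct in substance but follows a genuinely different route from the paper. The paper states this lemma without an explicit proof: these bases are imported from the diagrammatic framework of \cite{Li22}, and the method is the one spelled out for the neighbouring case $\hh^\bullet(B_tB_sB_t)$ in \cref{hhsts} --- adjunction, the light-leaves/$01$ bases, and \cite[Theorem 7.6]{Li22}, which produce the answer directly in terms of the named generators (Hochschild dots, exterior boxes) uniformly in $m_{st}$. You instead resolve $R$ over $R^e$ by the Koszul complex on the regular sequence $\rho_s,\rho_t$ and compute $\ker/\operatorname{im}$ of the commutator differentials on the rank-$4$ left $R$-module $B_sB_t$ by hand. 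This is a sound and more elementary argument (and for $M=R$ it is immediate, recovering $\hh^\bullet(R)=\Lambda$ with the correct shifts; a quick check in the $m_{st}=2$ case confirms $\ker\delta^0=\runit\bunit\,R=R(-2)$ and $\operatorname{coker}\delta^1=R(6)$ as claimed). What it costs you relative to the paper's route is exactly what you flag: the right-action bookkeeping must be redone for each value of $m_{st}$ (only the final answer, not the intermediate matrices, is uniform), and the identification of the surviving Koszul cocycles with the specific diagrams $\rhunit\bunit$, $\runit\bhunit$, $\rhunit\bhunit$ and the exterior boxes is an extra normalization step --- it must be made consistent with the conventions of \cite{Li22} (the Hochschild barbell and the one-color cohomology relation), since those diagrams are \emph{defined} there via such a resolution. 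What it buys is self-containedness: no reliance on \cite[Theorem 7.6]{Li22} or the light-leaves machinery. As written the proposal is a plan (the key kernel/cokernel computations are asserted as expectations rather than carried out), but there is no gap in the strategy.
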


\begin{lemma}
\label{hhsts}
For any $s\neq t$
\begin{align*}
 \hh^{0}(B_tB_sB_t)&=  \bunit \runit \bunit \, R \oplus \bpitchcup \, R   =R(-3)\oplus R(-1) \\
\hh^{1}(B_tB_sB_t)&=  \bhunit \runit \bunit \, R  \oplus \bhpitchcupout \, R  \oplus \bhpitchcupin \, R  \oplus  \bunit \rhunit \bunit \, R= R(1)\oplus R(3)\oplus R(3)\oplus R(1) \\
\hh^{2}(B_tB_sB_t)&= \bhhpitchcupout \, R \oplus \bhunit \rhunit \bunit \, R=R(7)\oplus R(5)
\end{align*}
\end{lemma}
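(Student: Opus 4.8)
The plan is to compute $\hh^\bullet(B_tB_sB_t) = \ext^\bullet_{R^e}(R, B_tB_sB_t)$ through the small model underlying the Ext Soergel calculus of \cite{Li22}. As $R = \kb[\alpha_s,\alpha_t]$ is regular, $R$ is resolved over $R^e = R\otimes_\kb R$ by the Koszul complex on the regular sequence $(\alpha_s\otimes 1 - 1\otimes\alpha_s,\ \alpha_t\otimes 1 - 1\otimes\alpha_t)$, so $\hh^\bullet(B_tB_sB_t)$ is the cohomology of the three-term complex of left $R$-modules
\[
B_tB_sB_t \xrightarrow{\ \partial\ } B_tB_sB_t\otimes_\kb\Lambda^1 \xrightarrow{\ \partial\ } B_tB_sB_t\otimes_\kb\Lambda^2,
\]
where $\Lambda^k$ denotes the degree-$k$ part of $\Lambda = \Lambda^\bullet[\alpha_s^\vee,\alpha_t^\vee]$ and $\partial$ wedges on an exterior generator $\alpha_r^\vee$ while applying the commutator $\alpha_r\cdot(-) - (-)\cdot\alpha_r$. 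Diagrammatically, the exterior box records the $\Lambda$-tensorand, a Hochschild (start)dot marks a bimodule generator acted on by such a commutator, and $\partial$ is evaluated using the \textbf{Hochschild barbell} relation together with (exterior) \textbf{polynomial forcing} and \textbf{one-color cohomology}.

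First I would fix Libedinsky's light-leaves $R$-basis of $B_tB_sB_t = \bs(tst)$, indexed by the eight subexpressions of $tst$; the generators named in the lemma are, in Hochschild degree $0$, exactly these light-leaf diagrams ($\bunit \runit \bunit$, $\bpitchcup$, and the trivalent ones), and in higher Hochschild degree their decorations by Hochschild (start)dots and exterior boxes that place them in the correct degree, the stated $(A,Q)$-shifts being a direct degree count. Tensoring with $\Lambda^\bullet$ gives explicit left-$R$-bases of the three terms of the complex, of ranks $8$, $16$, and $8$. The two ends are then quick: $\hh^0 = \ker\partial$ is the bimodule center of $B_tB_sB_t$, which one identifies — either directly, or using $B_tB_sB_t\cong B_{tst}\oplus B_t$ when $m_{st}\ge 3$ and $B_tB_sB_t\cong B_sB_t(1)\oplus B_sB_t(-1)$ when $m_{st}=2$, the latter reducing the whole lemma to the preceding lemma on $\hh^\bullet(B_sB_t)$ — as the free rank-$2$ module generated by $\bunit \runit \bunit$ and $\bpitchcup$, so $\hh^0 = R(-3)\oplus R(-1)$; and then by self-duality of $B_tB_sB_t$ and the Hochschild analogue of Poincar\'e duality over $R$ (cf. \cref{lem:dualcomplex} with $n-1 = 2$) one gets $\hh^2 \cong \hom_R(\hh^0, R)(4) = R(7)\oplus R(5)$.

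This leaves $\hh^1$, which I expect to be the main obstacle: one must control $\ker\partial$ on the rank-$16$ middle term together with $\operatorname{im}\partial$ from the rank-$8$ bottom term, and identify the subquotient as an $R$-module, using the $Q$-grading to separate the four summands $R(1)\oplus R(3)\oplus R(3)\oplus R(1)$. Concretely I would write both differentials as matrices over $R$ in the light-leaves basis — evaluating $\partial$ on each basis diagram by converting the commutators into barbells and forcing the resulting boxes toward the endpoints, which uses only single-color polynomial forcing — and then row- and column-reduce; organizing the middle basis according to the two ``halves'' of the light-leaves basis lets a block change of variables annihilate the differential on one half and leave a small, transparent matrix. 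A graded Euler-characteristic computation for the complex $B_tB_sB_t\otimes_\kb\Lambda^\bullet$, the freeness of $\hh^\bullet(B_w)$ over $R$, and the forced self-duality of $\hh^1$ all serve as cross-checks, and together they already determine the $R$-module structure — though not the explicit diagrammatic generators, for which the matrix computation is still needed. Finally, the uniformity in $m_{st}$ that \cref{gensect} relies on comes for free: although $B_tB_sB_t$ and the differentials of $R^\bullet_\beta$ depend on $m_{st}$, the differential $\partial$ here involves only barbells, polynomial forcing, and one-color cohomology — none of which see $m_{st}$.
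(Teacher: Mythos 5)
Your route is genuinely different from the paper's. The paper never resolves $R$ over $R^e$: it uses adjunction to rewrite $\ext^\bullet_{R^e}(R,B_tB_sB_t)$ as $\ext^\bullet_{R^e}(B_t,B_sB_t)$ and then invokes the structural recursion of \cite[Theorem 7.6]{Li22}, which manufactures the basis directly --- one half by precomposing a known $\ext^0$ basis with a Hochschild dot, the other half by applying $\Db$ to light leaves and precomposing with explicit correction maps --- and finally rotates the diagrams back up. Your Koszul-complex computation is more elementary and self-contained, and your observations that $\hh^0$ is the double-leaves module, that $\hh^2$ follows from $\hh^0$ by the duality of \cite{GHM19}, and that the graded Euler characteristic of $B_tB_sB_t\otimes_\kb\Lambda^\bullet$ plus freeness then forces $\hh^1$ to have the graded rank of $R(1)^{\oplus 2}\oplus R(3)^{\oplus 2}$, do pin down the $R$-module structure cheaply (granting freeness of $\hh^\bullet$ over $R$, which is itself part of what \cite{Li22} proves). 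What this buys less cheaply is the content the rest of the paper actually consumes: the four named diagrams as explicit generators of $\hh^1$ and the two of $\hh^2$, which feed directly into the matrices of \cref{theorem:hhhpos2} and \cref{prebeta2theorem}. You correctly flag that identifying these still requires the rank-$16$ matrix reduction, but that reduction \emph{is} the lemma; as written the proposal defers it.

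The one step that is genuinely wrong is the closing claim that uniformity in $m_{st}$ ``comes for free'' because barbells, polynomial forcing and one-color cohomology ``do not see $m_{st}$.'' Polynomial forcing does see $m_{st}$: to express the commutator $\alpha_r\cdot m-m\cdot\alpha_r$ of a light-leaf basis element in a left $R$-basis you must force $\alpha_r$ across strands of the other color, which produces $s(\alpha_t)$ and $\partial_s(\alpha_t)=\langle\alpha_t,\alpha_s^\vee\rangle$, and this Cartan entry vanishes precisely when $m_{st}=2$. Your matrices for $\partial$ will therefore carry entries depending on $m_{st}$, so uniformity of the cohomology is a conclusion to be verified, not an input. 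This is exactly where the paper works hardest: its proof splits into $m_{st}=2$ and $m_{st}\neq 2$, and in the latter case the basis elements produced by the recursion differ from the $m_{st}=2$ ones by extra terms, which can be discarded only because one is allowed to modify a basis by elements of $\ker\rho_s^e(\un{ts})$. To close this, either run your matrix reduction with the Cartan entry as a parameter and check the cohomology (and the validity of the stated representatives) is independent of it, or import the paper's correction-term argument.
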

\begin{proof}
    The diagrams for $\hh^0$ is precisely the double leaves basis for $\hom_{R^e}(R, B_tB_sB_t)$ for any $s,t$, regardless of $m_{st}$. \un{Let us first do the case $m_{st}=2$.} To use \cite[Theorem 7.6]{Li22} for the higher $\hh^i$, first note by adjunction that
    \[ \ext^0_{R^e}(B_t, B_s B_t)= \substack{ \runit \bunit\\ \bcounit } \ R \oplus \raisebox{0ex}{\runit} \bzero \ R \]
    Now, \cite[Theorem 7.6]{Li22} says that the $\ker \rho_s^e(\un{st}) \un{\rho_t t(\rho_t)}$ part of $\ext^1_{R^e}(B_t, B_s B_t)$ is gotten by precomposing the above by \bhzero. This results in
    \[ \ker \rho_s^e(\un{ts}) \textnormal{ part of } \ext^1_{R^e}(B_t, B_s B_t)= \substack{ \runit \bunit\\ \bhcounit } \ R \oplus \raisebox{0.3ex}{\runit} \bhzero \ R \]
    Now for the $\Db(\ker \rho_s^e(\un{st}))\underline{\rho_s}$ part, we need to first calculate $\Db(\ker \rho_s^e(\un{st}))$. In this case, the Light Leaves basis actually equals the $01$ basis, i.e. the first two columns of the below table are equal in $\bs(\un{st})$, and similarly the last two columns are equal.
   { \begin{center}
    {
\setlength{\extrarowheight}{8pt}
        \begin{tabular}{c|c|c|c  }
           LL  &$01$ basis form & $\Db$ of 01 & LL of $\Db$  \\\hline
        $\substack{ \runit \bunit\\ \phantom{\bcounit} }$    & $\substack{ \runit \bunit\\ \rcounit \bcounit }$  & \raisebox{0.5ex}{\scalemath{1.3}{ \rzero \ \bzero}} & \raisebox{0.5ex}{\scalemath{1.3}{ \rzero \ \bzero}} \\[2ex] \hline
        \raisebox{1ex}{\runit \ \scalemath{1.2}{\bzero}} &  $\substack{ \runit \\ \rcounit} \raisebox{0.5ex}{\scalemath{1.4}{\bzero}} $ & \raisebox{0.5ex}{\scalemath{1.4}{\rzero}}$\substack{ \bunit \\ \bcounit}  $ &\raisebox{0.5ex}{\scalemath{1.4}{\rzero}}$\substack{ \bunit \\ \phantom{\bcounit}}  $
        \end{tabular}
       }
    \end{center}}
    We now need to precompose the rightmost column with the corresponding $\sbrac{{}_{tt}^0\psi_s^{1(\un{w})}}$, which can be found in the discussion following the proof of \cite[Theorem 7.6]{Li22}. We then obtain
    \[ \Db(\ker \rho_s^e(\un{st})) \textnormal{ part of } \ext^1_{R^e}(B_t, B_s B_t)= \substack{ \rhunit \bunit\\ \bcounit } \ R \oplus \raisebox{0ex}{\rhunit} \bzero \ R \]
    For $\ext^2_{R^e}(B_t, B_s B_t)$, \cite[Theorem 7.6]{Li22} says we just need to precompose the above diagrams with \bhzero. Finally, rotating all these diagrams up and to the left gives us the desired basis for $\ext^\bullet_{R^e}(R, B_t B_s B_t)$.\\
    
    \un{When $m_{st}\neq 2$} we follow the same procedure except when calculating $\Db(\ker \rho_s^e(\un{st}))\underline{\rho_s}$, we instead precompose by 
    \[ \begin{tikzpicture}[scale=0.7]
	       \draw[red] (1.1,-0.1) -- (2,-1);
	       \draw[blue] (2,0.1) -- (2,-1);
	       \draw[red] (2.9,-0.1) -- (2,-1);
	       \draw[blue] (2,-2) -- (2,-1);
	       \node[rkhdot] at (2,-1) {};
	       \node at (2,-1) {$\s{4}$};
	       \node[rdot] at (2.9,-0.1) {};
	\end{tikzpicture}\raisebox{4ex}{=} \ \  \begin{tikzpicture}[scale=0.6]
	       \draw[red] (1.1,0.1) -- (1.1,-1);
	       \draw[blue] (2,0.1) -- (2,-1);
	       \draw[blue] (2,-2) -- (2,-1);
	       \node[rhdot] at (1.1,-1) {};
	\end{tikzpicture}\raisebox{0.6cm}{$\ \ - \ \ $}
\begin{tikzpicture}[scale=0.6]
	       \draw[red] (1.1,0.1) -- (1.1,-1);
	       \draw[blue] (2,0.1) -- (2,-1);
	       \draw[blue] (2,-2) -- (2,-1);   
	       \node[rdot] at (1.1,-1) {};
	       \node[bhdot] at (2,-1.3) {};
	\end{tikzpicture}, \qquad \qquad
    \begin{tikzpicture}[scale=0.7]
	       \draw[red] (2,0) -- (2,-1);
	       \draw[blue] (2,-2) -- (2,-1);
	       \node[rkhdot] at (2,-1) {};
	       \node at (2,-1) {$\s{2}$};
	\end{tikzpicture} \ \ \raisebox{4ex}{=}
    \begin{tikzpicture}[scale=0.7]
    \draw[red] (1.1,-0.1) -- (2,-1);
	       \draw[blue] (2,0.1) -- (2,-1);
	       \draw[red] (2.9,-0.1) -- (2,-1);
	       \draw[blue] (2,-2) -- (2,-1);
	       \node[rkhdot] at (2,-1) {};
	       \node at (2,-1) {$\s{4}$};
	       \node[rdot] at (2.9,-0.1) {};
           \node[bdot] at (2,0.1) {};
	\end{tikzpicture}\raisebox{4ex}{=} \ \ \begin{tikzpicture}[scale=0.7]
	       \draw[red] (2,0) -- (2,-0.8);
	       \draw[blue] (2,-2) -- (2,-1.2);
	       \node[rhdot] at (2,-0.8) {};
           \node[bdot] at (2,-1.2) {};
	\end{tikzpicture}\raisebox{4ex}{$\ \ - \ \ $} \begin{tikzpicture}[scale=0.7]
	       \draw[red] (2,0) -- (2,-0.8);
	       \draw[blue] (2,-2) -- (2,-1.2);
	       \node[rdot] at (2,-0.8) {};
           \node[bhdot] at (2,-1.2) {};
	\end{tikzpicture}\]
where we have used 4-Ext Reduction on the LHS above. But as remarked after the proof of \cite[Theorem 3.13]{Li22} we can add any element of $\ker \rho_s^e(\un{ts})$ and the result will still be a basis. In particular, we can get rid of the second picture after the $-$ sign in each diagram above and so we end up with the same result as in the $m_{st}=2$ case.
\end{proof}

\subsection{Arbitrary Realization}

For the geometric realization of $S_n$, $R=R_n=\kb[\alpha_1, \ldots, \alpha_{n-1}]$. For $M\in R_n^e-\mathrm{mod}$, let $\hh^k_n(M):=\ext^k_{R_n^e}(R_n, M)$. Let $\Lambda =\Lambda^\bullet[\alpha_1^\vee, \ldots, \alpha_{n-1}^\vee]$. Then \cite[Lemma 3.1]{Li22} says that $\hh^\bullet_{R_{n}}(B_i)=\widetilde{\hh^\bullet_{R_2}}(B_i)\otimes_\kb \Lambda/(\alpha_i^\vee)$ and for $i\neq j$
$$\hh^\bullet_{R_{n}}(B_iB_j)=\widetilde{\hh^\bullet_{R_3}}(B_i B_j)\otimes_\kb \Lambda/(\alpha_i^\vee, \alpha_j^\vee), \qquad \hh^\bullet_{R_{n}}(B_iB_jB_i)=\widetilde{\hh^\bullet_{R_3}}(B_i B_j B_i)\otimes_\kb \Lambda/(\alpha_i^\vee, \alpha_j^\vee) $$
where $\widetilde{\hh^\bullet_{R_2}}(B_i)=\hh^\bullet_{R_2}(B_i)\otimes_{R_2}R_n$ (see \cref{hhbslem} for the case $n=3$) and similarly with $\widetilde{\hh^\bullet_{R_3}}(B_i B_j)$. There is a slight abuse of notation as we actually have $R_2=\kb[\alpha_i], R_3=\kb[\alpha_i, \alpha_j]$. Diagrammatically, the exterior algebra parts above are encoded in exterior boxes. For example, when $n=3$, $\Lambda=\Lambda^\bullet[\alpha_s^\vee, \alpha_t^\vee]$ and $\hh^\bullet_{R_{3}}(B_s), \hh^\bullet_{R_{3}}(B_t)$ is precisely \cref{hhbslem}. Finally, we also need

\begin{lemma}
For $s,t,u$ all pairwise not equal, $R_4=\kb[\alpha_s, \alpha_t, \alpha_u]$ and $u$ green, $\hh^\bullet_{R_{n}}(B_sB_tB_u)=\widetilde{\hh^\bullet_{R_4}}(B_s B_t B_u)\otimes_\kb \Lambda/(\alpha_s^\vee, \alpha_t^\vee, \alpha_u^\vee )$ and 
\begin{align*}
 &\hh^{0}_{R_4}(B_sB_tB_u)=  \runit \bunit \gunit \, R_4 =R_4(-3) \\
&\hh^{1}_{R_4}(B_sB_tB_u)=  \rhunit \bunit \gunit \, R_4\oplus  \runit \bhunit \gunit \, R_4\oplus  \runit \bunit \ghunit \, R_4=R_4(1)\oplus R_4(1)\oplus R_4(1)  \\
&\hh^{2}_{R_4}(B_sB_tB_u)=  \rhunit \bhunit \gunit  \, R_4\oplus \rhunit \bunit \ghunit  \, R_4\oplus \runit \bhunit \ghunit  \, R_4=R_4(5)\oplus R_4(5) \oplus R_4(5) \\
&\hh^{3}_{R_4}(B_sB_tB_u)=  \rhunit \bhunit \ghunit  \, R_4=R_4(9)
\end{align*}
\end{lemma}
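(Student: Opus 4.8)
The plan is to follow the proof of \cref{hhsts} essentially verbatim. First, the factorisation $\hh^\bullet_{R_n}(B_sB_tB_u)\cong \widetilde{\hh^\bullet_{R_4}}(B_sB_tB_u)\otimes_\kb\Lambda/(\alpha_s^\vee,\alpha_t^\vee,\alpha_u^\vee)$ is obtained exactly as in \cite[Lemma 3.1]{Li22}: the Hochschild cohomology of a Bott--Samelson bimodule only involves the variables indexed by the colours occurring in it, the remaining $\alpha_i$ ($i\notin\{s,t,u\}$) split off trivially, and the remaining exterior generators split off as the factor $\Lambda/(\alpha_s^\vee,\alpha_t^\vee,\alpha_u^\vee)$; since each of $s,t,u$ occurs in $B_sB_tB_u$, all three of $\alpha_s^\vee,\alpha_t^\vee,\alpha_u^\vee$ are killed, so it remains to verify the four displayed formulas over $R_4=\kb[\alpha_s,\alpha_t,\alpha_u]$.

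For $\hh^0_{R_4}(B_sB_tB_u)=\hom_{R_4^e}(R_4,B_sB_tB_u)$ the double leaves basis of \cite{SC} applies directly: since $s,t,u$ are pairwise distinct the only light leaf from the empty expression up to $stu$ creates all three strands by start--dots, giving the single generator $\runit\bunit\gunit$ in $Q$--degree $3$, so $\hh^0_{R_4}(B_sB_tB_u)=R_4(-3)$. For $i\ge 1$ I would pull out the leftmost tensor factor using self--biadjointness of $B_s\otimes_{R_4}(-)$ to identify $\hh^\bullet_{R_4}(B_sB_tB_u)$ with $\ext^\bullet_{R_4^e}(B_s,B_tB_u)$ up to a grading shift, and then run the recursion of \cite[Theorem 7.6]{Li22} as in \cref{hhsts}: starting from $\ext^0_{R_4^e}(B_s,B_tB_u)$ — which, since $s\notin\{t,u\}$, is again free of rank one on the all--dots diagram — each step produces $\ext^{i+1}$ from $\ext^i$ as a sum of a $\ker\rho^e$--part, obtained by precomposing with a Hochschild dot on the $s$--strand, and a dual part, which slides a Hochschild dot onto the $t$-- or $u$--strand. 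Since no two of the three strands can ever merge, no trivalent or $2m$--valent vertices enter, so at each stage the basis consists precisely of the diagrams ``all dots, with $i$ of them promoted to Hochschild start--dots'', and after three steps, rotated back into $\ext^\bullet_{R_4^e}(R_4,B_sB_tB_u)$, these are exactly the diagrams listed. The grading shifts $(-3),(1),(5),(9)$ are then pure bookkeeping (each ordinary start--dot has $Q$--degree $1$, each Hochschild start--dot has $Q$--degree $-3$); note that a single strand cannot carry two Hochschild dots, since that diagram would involve a killed $\alpha^\vee$, which is why only $\binom 3 i$ generators survive in $\hh^i$.

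The one step with genuine content is uniformity in the Coxeter numbers $m_{st},m_{su},m_{tu}\in\{2,3\}$, and I expect this to be the main obstacle — though it is entirely parallel to the $m_{st}\ne 2$ half of the proof of \cref{hhsts}. When a pair of colours is adjacent, the two--colour Ext Soergel relation used to normalise the dual part of $\ext^1$ acquires extra summands lying in the relevant $\ker\rho^e$; using $4$-Ext reduction and the fact that a double leaves basis may be modified by kernel elements, one checks these corrections do not change the resulting basis, so the answer is independent of the $m$'s. As a consistency check, when $s,t,u$ pairwise commute $B_sB_tB_u$ is an external tensor product and the Künneth theorem gives $\hh^\bullet_{R_4}(B_sB_tB_u)=\hh^\bullet(B_s)\otimes_\kb\hh^\bullet(B_t)\otimes_\kb\hh^\bullet(B_u)$, which reproduces the stated ranks and grading shifts.
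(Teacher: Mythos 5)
The paper states this lemma without proof (it is the final result of \cref{basisappend} and no proof environment follows it), so there is no in-paper argument to compare against; what you have written is the natural adaptation of the paper's proof of \cref{hhsts} to three distinct colours, and it is essentially sound. Your identification of $\hh^0_{R_4}(B_sB_tB_u)$ is correct: since $s,t,u$ are pairwise distinct the only subexpression of $stu$ evaluating to $e$ is the empty one, so the double leaves basis is the single all-start-dots diagram in $Q$-degree $3$, and your degree bookkeeping $(-3),(1),(5),(9)$ matches the conventions $(0,1)$ for a start-dot and $(1,-3)$ for a Hochschild start-dot. You also correctly single out the two points of genuine content: that no trivalent or $2m$-valent vertices can appear (so the Ext basis is exactly ``all dots with $i$ of them promoted''), and that the answer must be shown independent of $m_{st},m_{su},m_{tu}$, which parallels the $m_{st}\neq 2$ half of the proof of \cref{hhsts}. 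The one caveat worth making explicit is that \cite[Theorem 7.6]{Li22} is invoked in this paper only for two-colour computations of the form $\ext^\bullet(B_t,B_sB_t)$ in a rank-$2$ realization; applying the recursion to $\ext^\bullet_{R_4^e}(B_s,B_tB_u)$ requires either a three-colour version of that theorem or an additional adjunction step peeling off one colour at a time, and your K\"unneth consistency check only certifies the case where all three colours pairwise commute, not the case where some $m=3$. If you spell out how the $\Db(\ker\rho^e)\underline{\rho}$ part of the recursion distributes the Hochschild dot over the two remaining strands $t$ and $u$, the argument is complete.
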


\end{appendices}

\printbibliography

@article {BM90,
    AUTHOR = {Birman, Joan S. and Menasco, William W.},
     TITLE = {Studying links via closed braids. {IV}. {C}omposite links and
              split links},
   JOURNAL = {Invent. Math.},
  FJOURNAL = {Inventiones Mathematicae},
    VOLUME = {102},
      YEAR = {1990},
    NUMBER = {1},
     PAGES = {115--139},
      ISSN = {0020-9910,1432-1297},
   MRCLASS = {57M25 (20F36)},
  MRNUMBER = {1069243},
MRREVIEWER = {Hugh\ Reynolds\ Morton},
       DOI = {10.1007/BF01233423},
       URL = {https://doi.org/10.1007/BF01233423},
}

@misc{Rou04,
      title={Categorification of the braid groups}, 
      author={Raphael Rouquier},
      year={2004},
      eprint={math/0409593},
      archivePrefix={arXiv},
      primaryClass={math.RT},
      url={https://arxiv.org/abs/math/0409593}, 
}

@article {Hog18,
    AUTHOR = {Hogancamp, Matthew},
     TITLE = {Categorified {Y}oung symmetrizers and stable homology of torus
              links},
   JOURNAL = {Geom. Topol.},
  FJOURNAL = {Geometry \& Topology},
    VOLUME = {22},
      YEAR = {2018},
    NUMBER = {5},
     PAGES = {2943--3002},
      ISSN = {1465-3060,1364-0380},
   MRCLASS = {18G60 (57M27)},
  MRNUMBER = {3811775},
MRREVIEWER = {Benjamin\ Cooper},
       DOI = {10.2140/gt.2018.22.2943},
       URL = {https://doi.org/10.2140/gt.2018.22.2943},
}

@article {Kho00,
    AUTHOR = {Khovanov, Mikhail},
     TITLE = {A categorification of the {J}ones polynomial},
   JOURNAL = {Duke Math. J.},
  FJOURNAL = {Duke Mathematical Journal},
    VOLUME = {101},
      YEAR = {2000},
    NUMBER = {3},
     PAGES = {359--426},
      ISSN = {0012-7094,1547-7398},
   MRCLASS = {57M27 (57R56)},
  MRNUMBER = {1740682},
       DOI = {10.1215/S0012-7094-00-10131-7},
       URL = {https://doi.org/10.1215/S0012-7094-00-10131-7},
}

@online{M,
  author       = {Makisumi, Shotaro},
  title        = {Diagrammatics for Ext-Enhanced Soergel Bimodules in Type $A_1$},
  date         = {2022},
  eprint       = {https://makisumi.com/math/articles/hsbim.pdf},
}

@article {KoszulAMRW,
    AUTHOR = {Achar, Pramod N. and Makisumi, Shotaro and Riche, Simon and
              Williamson, Geordie},
     TITLE = {Koszul duality for {K}ac-{M}oody groups and characters of
              tilting modules},
   JOURNAL = {J. Amer. Math. Soc.},
  FJOURNAL = {Journal of the American Mathematical Society},
    VOLUME = {32},
      YEAR = {2019},
    NUMBER = {1},
     PAGES = {261--310},
      ISSN = {0894-0347},
   MRCLASS = {20G05 (16D80 16S37 20G44)},
  MRNUMBER = {3868004},
       DOI = {10.1090/jams/905},
       URL = {https://doi.org/10.1090/jams/905},
}

@article {Soe90,
    AUTHOR = {Soergel, Wolfgang},
     TITLE = {Kategorie {$\mathscr{O}$}, perverse {G}arben und {M}oduln \"{u}ber den
              {K}oinvarianten zur {W}eylgruppe},
   JOURNAL = {J. Amer. Math. Soc.},
  FJOURNAL = {Journal of the American Mathematical Society},
    VOLUME = {3},
      YEAR = {1990},
    NUMBER = {2},
     PAGES = {421--445},
      ISSN = {0894-0347},
   MRCLASS = {17B35},
  MRNUMBER = {1029692},
MRREVIEWER = {Ian M. Musson},
       DOI = {10.2307/1990960},
       URL = {https://doi-org.ezproxy.cul.columbia.edu/10.2307/1990960},
}

@article {EK10,
    AUTHOR = {Elias, Ben and Khovanov, Mikhail},
     TITLE = {Diagrammatics for {S}oergel categories},
   JOURNAL = {Int. J. Math. Math. Sci.},
  FJOURNAL = {International Journal of Mathematics and Mathematical
              Sciences},
      YEAR = {2010},
     PAGES = {Art. ID 978635, 58},
      ISSN = {0161-1712},
   MRCLASS = {18D10 (17B10 20C08)},
  MRNUMBER = {3095655},
MRREVIEWER = {Theo Johnson-Freyd},
       DOI = {10.1155/2010/978635},
       URL = {https://doi-org.ezproxy.cul.columbia.edu/10.1155/2010/978635},
}

@article {DC,
    AUTHOR = {Elias, Ben},
     TITLE = {The two-color {S}oergel calculus},
   JOURNAL = {Compos. Math.},
  FJOURNAL = {Compositio Mathematica},
    VOLUME = {152},
      YEAR = {2016},
    NUMBER = {2},
     PAGES = {327--398},
      ISSN = {0010-437X},
   MRCLASS = {20C08 (18D05)},
  MRNUMBER = {3462556},
MRREVIEWER = {Vanessa Miemietz},
       DOI = {10.1112/S0010437X15007587},
       URL = {https://doi-org.ezproxy.cul.columbia.edu/10.1112/S0010437X15007587},
}

@article {SC,
    AUTHOR = {Elias, Ben and Williamson, Geordie},
     TITLE = {Soergel calculus},
   JOURNAL = {Represent. Theory},
  FJOURNAL = {Representation Theory. An Electronic Journal of the American
              Mathematical Society},
    VOLUME = {20},
      YEAR = {2016},
     PAGES = {295--374},
   MRCLASS = {20C08 (18D10 20F55)},
  MRNUMBER = {3555156},
MRREVIEWER = {Vanessa Miemietz},
       DOI = {10.1090/ert/481},
       URL = {https://doi-org.ezproxy.cul.columbia.edu/10.1090/ert/481},
}

@article {HodgeSoergel,
    AUTHOR = {Elias, Ben and Williamson, Geordie},
     TITLE = {The {H}odge theory of {S}oergel bimodules},
   JOURNAL = {Ann. of Math. (2)},
  FJOURNAL = {Annals of Mathematics. Second Series},
    VOLUME = {180},
      YEAR = {2014},
    NUMBER = {3},
     PAGES = {1089--1136},
      ISSN = {0003-486X},
   MRCLASS = {20C08 (20F55)},
  MRNUMBER = {3245013},
MRREVIEWER = {Chi Kin Mak},
       DOI = {10.4007/annals.2014.180.3.6},
       URL = {https://doi-org.ezproxy.cul.columbia.edu/10.4007/annals.2014.180.3.6},
}

@article {Wtorsionexplod,
    AUTHOR = {Williamson, Geordie},
     TITLE = {Schubert calculus and torsion explosion},
      NOTE = {With a joint appendix with Alex Kontorovich and Peter J.
              McNamara},
   JOURNAL = {J. Amer. Math. Soc.},
  FJOURNAL = {Journal of the American Mathematical Society},
    VOLUME = {30},
      YEAR = {2017},
    NUMBER = {4},
     PAGES = {1023--1046},
      ISSN = {0894-0347},
   MRCLASS = {20C20 (14M15 14N15 20G05)},
  MRNUMBER = {3671935},
MRREVIEWER = {Volodymyr Mazorchuk},
       DOI = {10.1090/jams/868},
       URL = {https://doi-org.ezproxy.cul.columbia.edu/10.1090/jams/868},
}

@article {Pic20,
    AUTHOR = {Piccirillo, Lisa},
     TITLE = {The {C}onway knot is not slice},
   JOURNAL = {Ann. of Math. (2)},
  FJOURNAL = {Annals of Mathematics. Second Series},
    VOLUME = {191},
      YEAR = {2020},
    NUMBER = {2},
     PAGES = {581--591},
      ISSN = {0003-486X,1939-8980},
   MRCLASS = {57K10 (57R65)},
  MRNUMBER = {4076631},
MRREVIEWER = {Laurence\ R.\ Taylor},
       DOI = {10.4007/annals.2020.191.2.5},
       URL = {https://doi.org/10.4007/annals.2020.191.2.5},
}

@article {Ras10,
    AUTHOR = {Rasmussen, Jacob},
     TITLE = {Khovanov homology and the slice genus},
   JOURNAL = {Invent. Math.},
  FJOURNAL = {Inventiones Mathematicae},
    VOLUME = {182},
      YEAR = {2010},
    NUMBER = {2},
     PAGES = {419--447},
      ISSN = {0020-9910,1432-1297},
   MRCLASS = {57M27},
  MRNUMBER = {2729272},
MRREVIEWER = {William\ D.\ Gillam},
       DOI = {10.1007/s00222-010-0275-6},
       URL = {https://doi.org/10.1007/s00222-010-0275-6},
}

@article {Kho07,
    AUTHOR = {Khovanov, Mikhail},
     TITLE = {Triply-graded link homology and {H}ochschild homology of
              {S}oergel bimodules},
   JOURNAL = {Internat. J. Math.},
  FJOURNAL = {International Journal of Mathematics},
    VOLUME = {18},
      YEAR = {2007},
    NUMBER = {8},
     PAGES = {869--885},
      ISSN = {0129-167X},
   MRCLASS = {57M27 (18G60)},
  MRNUMBER = {2339573},
MRREVIEWER = {Sabin Cautis},
       DOI = {10.1142/S0129167X07004400},
       URL = {https://doi-org.ezproxy.cul.columbia.edu/10.1142/S0129167X07004400},
}

@online{HM19,
  author       = {Hogancamp, Matthew and Mellit, Anton},
  title        = {Torus link homology},
  date         = {2019},
  eprinttype   = {arxiv},
  eprint       = {1909.00418},
}

@article {EH19,
    AUTHOR = {Elias, Ben and Hogancamp, Matthew},
     TITLE = {On the computation of torus link homology},
   JOURNAL = {Compos. Math.},
  FJOURNAL = {Compositio Mathematica},
    VOLUME = {155},
      YEAR = {2019},
    NUMBER = {1},
     PAGES = {164--205},
      ISSN = {0010-437X},
   MRCLASS = {57M27 (57M25)},
  MRNUMBER = {3880028},
MRREVIEWER = {Mark Clifford Hughes},
       DOI = {10.1112/s0010437x18007571},
       URL = {https://doi.org/10.1112/s0010437x18007571},
}

@article {Ras15,
    AUTHOR = {Rasmussen, Jacob},
     TITLE = {Some differentials on {K}hovanov-{R}ozansky homology},
   JOURNAL = {Geom. Topol.},
  FJOURNAL = {Geometry \& Topology},
    VOLUME = {19},
      YEAR = {2015},
    NUMBER = {6},
     PAGES = {3031--3104},
      ISSN = {1465-3060},
   MRCLASS = {57M27 (55T25)},
  MRNUMBER = {3447099},
MRREVIEWER = {Matthew Hogancamp},
}

@article {BCH23,
    AUTHOR = {Bowman, Chris and Cox, Anton and Hazi, Amit},
     TITLE = {Path isomorphisms between quiver {H}ecke and diagrammatic
              {B}ott-{S}amelson endomorphism algebras},
   JOURNAL = {Adv. Math.},
  FJOURNAL = {Advances in Mathematics},
    VOLUME = {429},
      YEAR = {2023},
      ISSN = {0001-8708,1090-2082},
   MRCLASS = {20C08 (20G05)},
  MRNUMBER = {4611117},
MRREVIEWER = {Stuart\ Martin},
       DOI = {10.1016/j.aim.2023.109185},
       URL = {https://doi.org/10.1016/j.aim.2023.109185},
}

@article {EQ23,
    AUTHOR = {Elias, Ben and Qi, You},
     TITLE = {Categorifying {H}ecke algebras at prime roots of unity, part
              {I}},
   JOURNAL = {Trans. Amer. Math. Soc.},
  FJOURNAL = {Transactions of the American Mathematical Society},
    VOLUME = {376},
      YEAR = {2023},
    NUMBER = {11},
     PAGES = {7691--7742},
      ISSN = {0002-9947,1088-6850},
   MRCLASS = {18N25 (20C08)},
  MRNUMBER = {4657219},
MRREVIEWER = {Matthew\ Westaway},
       DOI = {10.1090/tran/8908},
       URL = {https://doi.org/10.1090/tran/8908},
}

@article {LW22,
    AUTHOR = {Libedinsky, Nicolas and Williamson, Geordie},
     TITLE = {The anti-spherical category},
   JOURNAL = {Adv. Math.},
  FJOURNAL = {Advances in Mathematics},
    VOLUME = {405},
      YEAR = {2022},
      ISSN = {0001-8708,1090-2082},
   MRCLASS = {20C08 (05E10 18B99 20C20 20G40 20J99)},
  MRNUMBER = {4437613},
MRREVIEWER = {Shoumin\ Liu},
       DOI = {10.1016/j.aim.2022.108509},
       URL = {https://doi.org/10.1016/j.aim.2022.108509},
}

@article {Mal24,
    AUTHOR = {Maltoni, Leonardo},
     TITLE = {Reducing {R}ouquier complexes},
   JOURNAL = {Proc. Lond. Math. Soc. (3)},
  FJOURNAL = {Proceedings of the London Mathematical Society. Third Series},
    VOLUME = {129},
      YEAR = {2024},
    NUMBER = {1},
      ISSN = {0024-6115,1460-244X},
   MRCLASS = {20G05 (13P20 18N25 20F36)},
  MRNUMBER = {4766986},
}

@article {GHM19,
    AUTHOR = {Gorsky, Eugene and Hogancamp, Matthew and Mellit, Anton and
              Nakagane, Keita},
     TITLE = {Serre duality for {K}hovanov-{R}ozansky homology},
   JOURNAL = {Selecta Math. (N.S.)},
  FJOURNAL = {Selecta Mathematica. New Series},
    VOLUME = {25},
      YEAR = {2019},
    NUMBER = {5},
      ISSN = {1022-1824,1420-9020},
   MRCLASS = {57K18 (18G80 20C08)},
  MRNUMBER = {4036505},
MRREVIEWER = {Pedro\ Vaz},
       DOI = {10.1007/s00029-019-0524-5},
       URL = {https://doi.org/10.1007/s00029-019-0524-5},
}

@misc{NS24,
      title={Computations of HOMFLY homology}, 
      author={Keita Nakagane and Taketo Sano},
      year={2024},
      eprint={2111.00388},
      archivePrefix={arXiv},
      primaryClass={math.GT},
      url={https://arxiv.org/abs/2111.00388}, 
}

@misc{Li22,
      title={The Two-Color Ext Soergel Calculus}, 
      author={Cailan Li},
      year={2022},
      eprint={2211.07802},
      archivePrefix={arXiv},
      primaryClass={math.RT},
      url={https://arxiv.org/abs/2211.07802}, 
}

\end{document}